\newcolumntype{C}[1]{>{\centering\arraybackslash}p{#1}}
\newcolumntype{L}[1]{>{\arraybackslash}p{#1}}
\def\fCenter{{\mbox{$\ \vdash\ $}}}
\newcommand{\fns}{\footnotesize}
\def\mc{\multicolumn}
\newcommand{\msf}{\mathsf}
\newcommand{\xtop}{\ensuremath{\top}\xspace}
\newcommand{\xbot}{\ensuremath{\bot}\xspace}
\newcommand{\xand}{\ensuremath{\wedge}\xspace}
\newcommand{\xor}{\ensuremath{\vee}\xspace}
\newcommand{\xrarr}{\ensuremath{\rightarrow}\xspace}
\newcommand{\xcrarr}{\ensuremath{\,{>\mkern-7mu\raisebox{-0.065ex}{\rule[0.5865ex]{1.38ex}{0.1ex}}}\,}\xspace}
\newcommand{\XTOP}{\hat{\top}}
\newcommand{\XBOT}{\ensuremath{\check{\bot}}\xspace}
\newcommand{\XAND}{\ensuremath{\:\hat{\wedge}\:}\xspace}
\newcommand{\XOR}{\ensuremath{\:\check{\vee}\:}\xspace}
\newcommand{\XRARR}{\ensuremath{\:\check{\rightarrow}\:}\xspace}
\newcommand{\XCRARR}{\ensuremath{\hat{{\:{>\mkern-7mu\raisebox{-0.065ex}{\rule[0.5865ex]{1.38ex}{0.1ex}}}\:}}}\xspace}
\newcommand{\dneg}{\ensuremath{\sim}}
\newcommand{\rdneg}{\ensuremath{\sim}}
\newcommand{\dtop}{\ensuremath{1}\xspace}
\newcommand{\dbot}{\ensuremath{0}\xspace}
\newcommand{\dand}{\ensuremath{\cap}\xspace}
\newcommand{\dor}{\ensuremath{\cup}\xspace}
\newcommand{\drarr}{\ensuremath{\,{\raisebox{-0.065ex}{\rule[0.5865ex]{1.38ex}{0.1ex}}\mkern-5mu\supset}\,}\xspace}
\newcommand{\dcrarr}{\ensuremath{\,{\supset\mkern-5.5mu\raisebox{-0.065ex}{\rule[0.5865ex]{1.38ex}{0.1ex}}}\,}\xspace}
\newcommand{\DNEG}{\ensuremath{\tilde{*}}\,}
\newcommand{\DTOP}{\ensuremath{\hat{1}}\xspace}
\newcommand{\DBOT}{\ensuremath{\check{0}}\xspace}
\newcommand{\DAND}{\ensuremath{\:\hat{\cap}\:}\xspace}
\newcommand{\DOR}{\ensuremath{\:\check{\cup}\:}\xspace}
\newcommand{\DRARR}{\ensuremath{\check{\,{\raisebox{-0.065ex}{\rule[0.5865ex]{1.38ex}{0.1ex}}\mkern-5mu\supset}\,}}\xspace}
\newcommand{\DCRARR}{\ensuremath{\hat{\,{\supset\mkern-5.5mu\raisebox{-0.065ex}{\rule[0.5865ex]{1.38ex}{0.1ex}}}\,}}\xspace}
\newcommand{\wbox}{\ensuremath{{\Box}}\,}
\newcommand{\wdia}{\ensuremath{{\circ}}\,}
\newcommand{\bbox}{\ensuremath{{\circ}}\,}
\newcommand{\bdia}{\ensuremath{\blacklozenge}\,}
\newcommand{\WBOX}{\ensuremath{\check{\Box}}\,}
\newcommand{\BDIA}{\ensuremath{\:\hat{\blacklozenge}}\,}
\newcommand{\WCIR}{\ensuremath{\tilde{{\circ}}}\,}
\newcommand{\BCIR}{\ensuremath{\:\tilde{{\bullet}}}}
\newcommand{\lh}{\ensuremath{\:\bullet_\ell}\xspace}
\newcommand{\rh}{\ensuremath{\:\bullet_r}\xspace}
\newcommand{\LH}{\ensuremath{\:\hat{\bullet}_\ell}\,}
\newcommand{\RH}{\ensuremath{\:\check{\bullet}_r}\,}
\theoremstyle{definition}
\theoremstyle{plain}
\newtheorem{theorem}{Theorem}
\newtheorem{lemma}[theorem]{Lemma}
\newtheorem{corollary}[theorem]{Corollary}
\newtheorem{proposition}[theorem]{Proposition}
\theoremstyle{definition}
\newtheorem{definition}[theorem]{Definition}
\title{Semi De Morgan logic properly displayed}
\author[2]{Giuseppe Greco}
\author[1]{Fei Liang}
\author[3]{M.~Andrew Moshier}
\author[4,5]{ Alessandra Palmigiano\thanks{This research is supported by the NWO Vidi grant 016.138.314, the NWO Aspasia grant 015.008.054, and a Delft Technology Fellowship awarded to the second author in 2013.}}
\affil[1]{Shandong University, China}
\affil[2]{Utrecht University, the Netherlands}
\affil[3]{Chapman University, USA}
\affil[4]{Delft University of Technology, the Netherlands}
\affil[5]{University of Johannesburg, South Africa}
\date{}
\begin{document}
\maketitle

\begin{abstract}

In the present paper, we endow a family of axiomatic extensions of semi De Morgan logic with proper multi-type display calculi which are sound, complete, conservative, and enjoy cut elimination and subformula property. Our proposal builds on an algebraic analysis of semi De Morgan algebras and its subvarieties and applies the guidelines of the multi-type methodology in the design of display calculi.\\
\\
$\mathbf{Keywords:}$ semi De Morgan algebras, proper display calculus, multi-type methodology.\\
$\mathbf{Math.\ Subject\ Class\ 2010:}$  03B45, 03G25, 03F05, 06D30, 08A68.
\end{abstract}

\section{Introduction}\label{sec:Introduction}

Semi De Morgan logic, introduced in an algebraic setting by H.P.~Sankappanavar \cite{sankappanavar1987semi}, is a very well known  paraconsistent logic \cite{priest2002paraconsistent}, and is designed to capture the salient features of intuitionistic negation in a paraconsistent setting. Semi De Morgan algebras form a variety of normal distributive lattice expansions (cf.~\cite[Definition 9]{greco2016unified}), and are a common abstraction of De Morgan algebras and distributive pseudocomplemented lattices. Besides being studied from a universal-algebraic perspective \cite{sankappanavar1987semi, celani1999distributive,celani2007representation}, semi De Morgan logic has been studied from a duality-theoretic perspective \cite{hobby1996semi} and from the perspective of canonical extensions \cite{Palma2005}. 

From a proof-theoretic perspective, the main challenge posed by semi De Morgan logic is that, unlike De Morgan logic,  its axiomatization is not analytic inductive in the sense of \cite[Definition 55]{greco2016unified}. In \cite{greco2017multi}, an analytic calculus for semi De Morgan logic is introduced  which is sound, complete, conservative, and enjoys cut elimination and subformula property. The design of this calculus builds on an algebraic analysis of semi De Morgan algebras, and applies the guidelines of the multi-type methodology, introduced in \cite{Multitype,PDL} and further developed in \cite{BGPTW,TrendsXIII,inquisitive,linearlogPdisplayed,GrecoPalmigianoLatticeLogic}. This methodology guarantees in particular that all the properties mentioned above follow from the general background theory of proper multi-type display calculi (cf.~\cite[Definition A.1.]{linearlogPdisplayed}).

Due to space constraints, in \cite{greco2017multi}, the proofs of the algebraic analysis on which the design of this calculus is grounded had to be omitted. The present chapter provides the missing proofs, and also extends the results of \cite{greco2017multi} by explicitly and modularly accounting for the logics associated with the five subvarieties of semi De Morgan algebras introduced in \cite{sankappanavar1987semi}. This modular account is partly made possible by the fact that all but two of these subvarieties correspond to axiomatic extensions of  semi De Morgan logic with so-called {\em analytic inductive} axioms (cf.~\cite[Definition 55]{greco2016unified}), and the two remaining ones can be given analytic equivalent presentations in the multi-type setting for the basic calculus. The general theory of proper (multi-type) display calculi provides an algorithm which computes the analytic structural rules corresponding to these axioms, and guarantees that each calculus obtained by adding any subset of these rules to the basic calculus still enjoys cut elimination and subformula property.

Therefore,  this chapter introduces  a proof-theoretic environment which is suitable to complement, from a proof-theoretic perspective, the investigations on the lattice of axiomatic extensions of semi De Morgan logic, 
as well as on the connections between the lattices of axiomatic extensions of semi De Morgan logic and of De Morgan logic.

\paragraph{Structure of the chapter.} In Section \ref{sec:pre}, we report on the axioms and rules of semi De Morgan logic and its axiomatic extensions arising from the subvarieties of semi De Morgan algebras introduced in \cite{sankappanavar1987semi}, and discuss why the basic axiomatization is not amenable to the standard treatment of display calculi. In Section \ref{sec:multitype presentation}, we define the algebraic environment which motivates our
multi-type approach and prove that this environment is an equivalent presentation of the standard algebraic semantics of semi De Morgan logic and its extensions. Then we introduce the multi-type semantic environment and define translations between the single-type and the multi-type languages of
semi De Morgan logic and its extensions. In Section \ref{sec: multi-type language}, we discuss how equivalent analytic (multi-type) reformulations can be given of non-analytic (single-type) axioms in the language of semi De Morgan logic.  In Section \ref{sec:proper display}, we introduce the display calculi for semi De Morgan logic and its extensions, and in Section \ref{sec:properties}, we discuss their soundness,  completeness, conservativity, cut elimination and subformula property.

\section{Preliminaries}\label{sec:pre}

\subsection{Semi De Morgan logic and its axiomatic extensions} \label{ssec:Hilbert system}
Fix a denumerable set $\mathsf{Atprop}$ of propositional variables, let $p$ denote an element in $\mathsf{Atprop}$. The language $\mathcal{L}$ of
semi De Morgan logic  over  $\mathsf{Atprop}$ is defined recursively as follows:
$$A\,\, \textrm{::=}\,\, p \mid \top \mid \bot \mid \neg A \mid A \wedge A \mid A \vee A$$

\begin{definition}\label{def:logics}
Semi De Morgan logic, denoted $\msf{SM}$, consists of the following axioms:
{\small
$$\bot \fCenter A,\quad A \fCenter \top,\quad\neg\top \fCenter \bot,\quad \top \fCenter \neg\bot,\quad A \fCenter A,\quad A \wedge B \fCenter A,\quad A\wedge B \fCenter B,$$
$$A \fCenter A \vee B, \quad B \fCenter A \vee B,\quad \neg A \fCenter \neg\neg\neg A,\quad\neg\neg\neg A \fCenter \neg A,\quad \neg A \wedge \neg B \fCenter \neg (A \vee B),$$
$$\neg\neg A \wedge \neg\neg B \fCenter \neg\neg(A \wedge B), \quad A \wedge (B \vee C) \fCenter (A \wedge B) \vee (A \wedge C)$$
}
and the following rules:
\\

\setlength{\parindent}{-1em}
{\small
\begin{tabular}{llll}
\AX $A \fCenter B$
\AX $B \fCenter C$
\BI $A \fCenter C$
\DP
&
\AX $A \fCenter B$
\AX $A \fCenter C$
\BI $A \fCenter B \wedge C$
\DP
&
\AX $A \fCenter B$
\AX $C \fCenter B$
\BI$A \vee C \fCenter B$
\DP
&
\AX$A \fCenter B$
\UI$ {\neg} B \fCenter {\neg} A$
\DP
\end{tabular}
}
\\

The following table reports the name of each axiomatic extension of $\msf{SM}$ arising from the subvarieties introduced in \cite{sankappanavar1987semi}, its acronym, and its characterizing axiom:

\begin{center}
\begin{tabular}{|rcl|}
\hline
lower quasi De Morgan logic&$\msf{LQM}$&$A \vdash \neg\neg A$\\
\hline
upper quasi De Morgan logic&$\msf{UQM}$&$\neg\neg A \vdash A$\\
\hline
demi pseudo-complemented lattice logic&$\msf{DP}$&$\neg A \wedge \neg\neg A \fCenter \bot$\\
\hline
almost pseudo-complemented lattice logic&$\msf{AP}$&$ A \wedge \neg A \fCenter \bot$\\
\hline
weak Stone logic&$\msf{WS}$&$\top \fCenter \neg A \vee \neg\neg A $\\
\hline
\end{tabular}
\end{center}
\end{definition}

In \cite{greco2016unified}, a characterization is given of the properly displayable (single-type) logics (i.e.~those logics that can be captured by a proper display calculus, cf.~\cite[Chapter 4]{Wa98}). Properly displayable logics  are exactly those logics  which admit a presentation consisting  of analytic inductive axioms  (cf.~\cite[Definition 55]{greco2016unified}). It is not difficult to verify that $\neg A \fCenter \neg\neg\neg A,~\neg\neg\neg A \fCenter \neg A$ and $\neg\neg A \wedge \neg\neg B \fCenter \neg\neg(A \wedge B)$  in $\msf{SM}$, $\neg\neg A \fCenter A$ in $\msf{UQM}$,   and $\neg A \wedge \neg\neg A \fCenter \bot$ in $\msf{DP}$ are not analytic inductive. 
To our knowledge, no equivalent axiomatizations have been introduced for semi De Morgan logic and its extensions using only analytic inductive axioms. This provides the motivation for circumventing this difficulty by introducing  proper multi-type display calculi for semi De Morgan logic and its extensions.

\subsection{The variety of semi De Morgan algebras and its subvarieties}\label{sec:sdm+}
We recall the definition of the variety  of semi De Morgan algebras and those of its subvarieties introduced in \cite[Definition 2.2, Definition 2.6]{sankappanavar1987semi}.

\begin{definition}\label{def:sdm}
An algebra $\mathbb{A}$ = $(L, \wedge, \vee, ',  \top, \bot)$ is a {\em semi De Morgan algebra} (SMA) if  for all $a, b \in L$,
\begin{enumerate}
\item[$(\mathrm{S1})$] $(L, \wedge, \vee, 1, 0)$ is a bounded distributive lattice;
\item[$(\mathrm{S2})$] $\bot'$ = $\top, \top'$ = $\bot$;
\item[$(\mathrm{S3})$] $(a \vee b)' $= $a' \wedge b'$;
\item[$(\mathrm{S4})$] $(a \wedge b)''$ = $a'' \wedge b''$;
\item[$(\mathrm{S5})$] $a'$ = $a'''$.
\end{enumerate}
A {\em lower quasi De Morgan algebra} (LQMA) is an SMA satisfying the following inequality:
\begin{enumerate}
\item[$\mathrm{(S6a)}$] $a \leq a''.$
\end{enumerate}
Dually, a {\em upper quasi De Morgan algebra} (UQMA) is an SMA satisfying the following inequality:
\begin{enumerate}
\item[$\mathrm{(S6b)}$] $a'' \leq a.$
\end{enumerate}
A {\em demi pseudocomplemented lattice} (DPL) is an SMA satisfying the following equation:
 \begin{enumerate}
\item[$\mathrm{(S7)}$] $a' \wedge a'' = \bot.$
\end{enumerate}
 A {\em almost pseudocomplemented lattice} (APL) is an SMA satisfying the following equation:
 \begin{enumerate}
\item[$\mathrm{(S8)}$] $a \wedge a' = \bot.$
\end{enumerate}
 A {\em weak Stone algebra} (WSA) is an SMA satisfying the following equation:
 \begin{enumerate}
\item[$\mathrm{(S9)}$] $a' \vee a'' = \top.$
\end{enumerate}
  \end{definition}
 The following proposition is a straightforward consequence of (S8), (S2), (S3) and (S5):
 \begin{proposition}[\cite{sankappanavar1987semi} see discussion above Corollary 2.7]\label{fact: s7}
$\mathrm{(S7)}$ holds in any APL and WSA.
\end{proposition}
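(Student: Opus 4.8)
The plan is to prove (S7), i.e.\ $a' \wedge a'' = \bot$, in any APL and any WSA, using only the axioms cited: (S8), (S2), (S3) and (S5). The core idea is to instantiate the defining equation of each variety at the element $a'$ rather than at $a$, and then to simplify the resulting expression using (S5) together with the De Morgan-type behaviour of $'$ on joins given by (S3) (and the boundary conditions (S2)).

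First I would treat the APL case. Here the hypothesis is (S8): $a \wedge a' = \bot$ for all $a$. The key observation is that this holds \emph{for every} element, so in particular it holds with $a'$ in place of $a$. Substituting, I obtain $a' \wedge a'' = \bot$, because $(a')' = a''$ by definition of the double-prime notation. This is almost immediate: (S8) applied at $a'$ literally reads $a' \wedge (a')' = \bot$, i.e.\ $a' \wedge a'' = \bot$, which is precisely (S7). So in the APL case no appeal to (S2), (S3) or (S5) is even needed beyond unwinding notation; the substitution does all the work.

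Next I would handle the WSA case, which is the genuinely interesting one and where I expect the real content of the proposition to lie. Here the hypothesis is (S9): $a' \vee a'' = \top$. I want to derive $a' \wedge a'' = \bot$. The natural move is again to instantiate (S9) at a cleverly chosen element and then apply the negation $'$ to convert the join into a meet via (S3). Concretely, applying $'$ to both sides of (S9) gives $(a' \vee a'')' = \top' = \bot$ by (S2); and by (S3), the left-hand side equals $a'' \wedge a'''$. Now (S5) supplies $a''' = a'$, so $a'' \wedge a''' = a'' \wedge a' = a' \wedge a''$, and hence $a' \wedge a'' = \bot$, which is exactly (S7).

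The main obstacle, such as it is, is bookkeeping rather than conceptual: one must keep careful track of how many primes accumulate and apply (S5) (the identity $a' = a'''$) at precisely the right moment to collapse a triple prime back to a single prime, and apply (S3) in the form $(x \vee y)' = x' \wedge y'$ to turn the join from (S9) into the desired meet. There is no inductive argument or case analysis needed, and no hard lattice-theoretic reasoning; the boundedness and distributivity from (S1) are not required for (S7) itself. I would therefore present the proof as two short computations, flagging the WSA case as the one that genuinely uses (S2), (S3) and (S5) in combination, exactly as the statement anticipates.
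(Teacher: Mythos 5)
Your proof is correct and is precisely the ``straightforward consequence'' argument the paper alludes to without spelling out (the paper gives no proof of its own, deferring to Sankappanavar): the APL case by instantiating (S8) at $a'$, and the WSA case by applying $'$ to (S9) and using (S2), (S3), (S5). The only nitpick is that the WSA case of course uses (S9), the defining axiom of WSAs, which the paper's preamble oddly omits from its list of ingredients; your write-up correctly supplies it.
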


\begin{definition}\label{def:dm}
An algebra $\mathbb{D} = (D, \cap, \cup, ^*,  1, 0)$ is a {\em De Morgan algebra} (DMA) if   for all $a, b \in D$,
\begin{enumerate}
\item[$(\mathrm{D1})$] $(D, \cap, \cup, 1, 0)$ is a bounded distributive lattice;
\item[$(\mathrm{D2})$] $0^*$ = $1, 1^*$ = $0$;
\item[$(\mathrm{D3})$] $(a \cup b)^*$ = $a^* \cap b^*$;
\item[$(\mathrm{D4})$] $(a \cap b)^*$ = $a^* \cup b^*$;
\item[$(\mathrm{D5})$] $a$ = $a^{**}$.
\end{enumerate}
\end{definition}
\noindent As is well known, a Boolean algebra (BA) $\mathbb{D}$ is a  DMA  satisfying one of the following equations:
\begin{enumerate}
\item[$(\mathrm{B}1)$] $a \vee a^* = 1$;
\item[$(\mathrm{B}2)$] $a \wedge a^* = 0.$
\end{enumerate}
The following theorem can be shown using a routine Lindenbaum-Tarski construction.
\begin{theorem}[\textbf{Completeness}]\label{completeness: H.SDM}
$\msf{SM}$ (resp.~$\msf{LQM}$, $\msf{UQM}$, $\msf{DP}$, $\msf{AP}$, $\msf{WS}$) is complete with respect to the class of SMAs (resp.~LQMAs, UQMAs, DPLs, APLs, WSAs).
\end{theorem}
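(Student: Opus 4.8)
The plan is to carry out a standard Lindenbaum--Tarski construction, establishing the result uniformly for $\msf{SM}$ and then checking that each extending axiom transfers correctly to the corresponding subvariety. First I would fix the logic in question, say $\msf{SM}$, and define on the set of formulas of $\mathcal{L}$ the relation $A \equiv B$ iff both $A \fCenter B$ and $B \fCenter A$ are derivable. Using reflexivity of $\fCenter$, the transitivity (cut) rule, and the congruence rules --- namely the two-premise rules for $\wedge$ and $\vee$ together with the antitonicity rule for $\neg$ --- one shows that $\equiv$ is an equivalence relation compatible with all the connectives. Hence the quotient $\mathbf{Fm}/{\equiv}$ carries well-defined operations $\wedge$, $\vee$, $\neg$ and constants $\top$, $\bot$, and the partial order induced by $\fCenter$ makes it a poset in which the quotient map is monotone.

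Next I would verify that this Lindenbaum--Tarski algebra is in fact an SMA, i.e.~that it satisfies $(\mathrm{S1})$--$(\mathrm{S5})$ of Definition~\ref{def:sdm}. Each algebraic identity corresponds to a pair of mutually derivable sequents: the lattice axioms $(\mathrm{S1})$ follow from the projection/injection axioms for $\wedge, \vee$, the join/meet introduction rules, and the explicit distributivity axiom $A \wedge (B \vee C) \fCenter (A \wedge B) \vee (A \wedge C)$ (the reverse distributive inequality being derivable in any lattice); $(\mathrm{S2})$ from $\neg\top \fCenter \bot$, $\top \fCenter \neg\bot$ together with the boundedness axioms $\bot \fCenter A$ and $A \fCenter \top$; $(\mathrm{S3})$ from $\neg A \wedge \neg B \fCenter \neg(A \vee B)$ plus the converse inequality derivable via antitonicity of $\neg$; $(\mathrm{S4})$ from $\neg\neg A \wedge \neg\neg B \fCenter \neg\neg(A \wedge B)$ and its converse; and $(\mathrm{S5})$ directly from the two axioms $\neg A \fCenter \neg\neg\neg A$ and $\neg\neg\neg A \fCenter \neg A$. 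With soundness of $\msf{SM}$ over SMAs (every derivable sequent is valid in every SMA, proved by a routine induction on derivations) already in hand, completeness then follows in the usual way: if $A \fCenter B$ is valid in all SMAs, it holds in particular in the Lindenbaum--Tarski algebra under the identity assignment, which forces $[A] \leq [B]$ and hence the derivability of $A \fCenter B$.

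For the five extensions I would argue modularly. The soundness direction is immediate once one checks that each characterizing inequality or equation is a sound interpretation of the corresponding Hilbert axiom, e.g.~$A \vdash \neg\neg A$ corresponds to $(\mathrm{S6a})$, $\neg\neg A \vdash A$ to $(\mathrm{S6b})$, and so on through $(\mathrm{S7})$--$(\mathrm{S9})$. For completeness, the key observation is that adding the extra axiom to $\msf{SM}$ forces the Lindenbaum--Tarski algebra to satisfy the additional (in)equation: since $A \fCenter \neg\neg A$ is now derivable, we get $[A] \leq [\neg\neg A] = [A]''$ in the quotient, placing it in LQMA, and analogously for the other four cases. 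Thus the quotient of the extended logic lands in the appropriate subvariety, and the same identity-assignment argument yields completeness there.

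The main obstacle --- more a matter of care than of genuine difficulty --- is verifying that $\equiv$ is a \emph{congruence} with respect to $\neg$, since the calculus supplies only the antitonicity rule from $A \fCenter B$ to $\neg B \fCenter \neg A$ rather than a two-sided monotone congruence rule. One must apply this rule to both $A \fCenter B$ and $B \fCenter A$ to obtain $\neg A \fCenter \neg B$ and $\neg B \fCenter \neg A$, yielding $\neg A \equiv \neg B$; this is the one place where the non-standard, purely antitone behaviour of the negation must be handled explicitly, but it goes through cleanly. All remaining steps are the expected bookkeeping of a Lindenbaum--Tarski argument.
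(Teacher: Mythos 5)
Your proposal is correct and is precisely the argument the paper has in mind: the paper's "proof" consists of the single remark that the theorem follows by a routine Lindenbaum--Tarski construction, and your write-up fills in exactly those routine details (congruence via the antitone $\neg$-rule applied in both directions, verification of $(\mathrm{S1})$--$(\mathrm{S5})$ from the Hilbert axioms, and the modular treatment of $(\mathrm{S6a})$--$(\mathrm{S9})$ for the five extensions). No gaps; the handling of the purely antitone negation, which you flag as the one delicate point, goes through exactly as you describe.
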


\begin{definition}\label{def: perfect algebra}
A distributive lattice $\mathbb{A}$ is  {\em perfect} (cf.~\cite[Definition 2.14]{GNV05}) if
$\mathbb{A}$ is complete, completely distributive and completely join-generated by the set $J^{\infty}(\mathbb{A})$ of its completely join-irreducible elements (as well as completely meet-generated by the set  $M^{\infty}(\mathbb{A})$ of its completely meet-irreducible elements).

A De Morgan algebra (resp.~Boolean algebra) $\mathbb{A}$ is {\em perfect} if its lattice reduct is a perfect distributive lattice, and the following distributive laws are valid:
\begin{center}
$(\bigvee X)^{*} \,\,\textrm{=}\,\, \bigwedge X^{*} \quad\quad (\bigwedge X)^{*}\,\,\textrm{=}\,\, \bigvee X^{*}.$
\end{center}
A lattice homomorphism $h: \mathbb{L} \rightarrow  \mathbb{L'}$ is  {\em complete} if  for each $X\subseteq \mathbb{L}$,
\begin{center}
$h(\bigvee X) \,\,\textrm{=}\,\,\bigvee h(X) \quad\quad h(\bigwedge X) \,\,\textrm{=}\,\, \bigwedge h(X).$
\end{center}
\end{definition}

\section{Towards a multi-type presentation}\label{sec:multitype presentation}
In the present section, we introduce the algebraic environment which justifies semantically the multi-type approach to semi De Morgan logic and its extensions of Section \ref{ssec:Hilbert system}. In the next subsection, we define the kernel of an SMA (cf.~Definition \ref{def:kernel}) and show that it can be endowed with a structure of DMA (cf.~Definition \ref{def:dm}). Similarly, we define the kernel of a DPL (cf.~Definition \ref{def:sdm}) and show that it can be endowed with a structure of Boolean algebra. Then we define two maps between the kernel of any SMA (resp.~DPL) $\mathbb{A}$ and the lattice reduct of $\mathbb{A}$. These are the main components of the heterogeneous semi De Morgan algebras and the heterogeneous demi p-lattices which we introduce in Subsection \ref{Heterogeneous presentation}, where we also show that SMAs (resp.~DPLs) can be equivalently presented in terms of heterogeneous semi De Morgan algebras (heterogeneous demi p-lattices). Based on these, we can also define the heterogeneous  algebras for other subvariety of SMAs we introduced in  Section \ref{sec:sdm+}. In Subsection \ref{Canonical extensions}, we apply results pertaining to the theory of canonical extensions to the heterogeneous  semi De Morgan algebras and the heterogeneous demi p-lattices.

\subsection{The kernel of a semi De Morgan algebra}\label{ssec:kernel}
For any semi De Morgan algebra $\mathbb{A} = (L, \wedge, \vee, ',  \top, \bot)$, we let $K:=\{a'' \mid a \in L\}$,  define $h: L \twoheadrightarrow K$ by the assignment  $a\mapsto a''$ for any $a \in L$, and let $e: K \hookrightarrow L$ denote the natural embedding. Hence,  $eh(a) = a''$ and $h(a) = h(a'')$ for every $a\in L$. \begin{lemma}
\label{eq:retractions}
For any  semi De Morgan algebra $\mathbb{A}$, and $K, h, e$ defined as above, the following equation holds for any $\alpha \in K$:
\begin{equation}\label{eq:composition}
he(\alpha) \,\,\textrm{=}\,\, \alpha
\end{equation}
\end{lemma}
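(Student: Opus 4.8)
The plan is to reduce the claim to a single application of axiom (S5) by unwinding the definitions of $h$ and $e$. Fix $\alpha \in K$. By definition of $K$ there is some $a \in L$ with $\alpha = a''$, and since $e$ is the natural inclusion of $K$ into $L$, the element $e(\alpha)$ is just $a''$ regarded as an element of $L$. Applying $h$ then gives $he(\alpha) = h(a'') = (a'')'' = a''''$, so the whole statement reduces to verifying the identity $a'''' = a''$.

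This last identity is an immediate consequence of (S5). Indeed, (S5) asserts $a' = a'''$ for every $a \in L$; applying the unary operation $'$ to both sides yields $(a')' = (a''')'$, that is, $a'' = a''''$. Substituting back gives $he(\alpha) = a'''' = a'' = \alpha$, as required.

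There is essentially no obstacle here: the only nontrivial ingredient is the observation that (S5), which governs triple iterations of $'$, already forces the double iteration $''$ to be idempotent on its image. Conceptually, the equation $he(\alpha) = \alpha$ says precisely that $e$ is a right inverse (section) of $h$, so that together with the identities $eh(a) = a''$ and $h(a) = h(a'')$ recorded just before the lemma, the pair $(h, e)$ exhibits $K$ as a retract of $L$. This retraction is exactly what will later allow the kernel $K$ to carry its own (De Morgan) structure while remaining coherently linked to the lattice reduct of $\mathbb{A}$.
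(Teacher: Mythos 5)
Your proposal is correct and follows essentially the same route as the paper: pick a preimage $a$ with $\alpha = a'' = h(a)$, reduce the claim to $a'''' = a''$, and conclude. The only cosmetic difference is that the paper invokes its previously recorded identity $h(a) = h(a'')$, whereas you unfold that identity explicitly to its source, namely applying $'$ to both sides of (S5).
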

\begin{proof}
Let $\alpha \in K$, and let $a\in L$ such that $h(a) = \alpha$. Hence,
\begin{center}
\begin{tabular}{rlll}
$he(\alpha)$ & = &$heh(a)$& $\alpha = h(a)$\\
&= &$h(a'')$&  $eh(a) = a''$\\
& = &$h(a)$& $h(a) = h(a'')$\\
&= &$\alpha$&definition  of $h$
\end{tabular}
\end{center}
\end{proof}

\begin{definition}
\label{def:kernel}
For any SMA $\mathbb{A} = (L, \wedge, \vee, \top, \bot, ')$,  let the {\em kernel} $\mathbb{K}_{\mathbb{A}} = (K, \cap, \cup, ^*,  1, 0)$
 of $\mathbb{A}$ be   defined as follows:
\begin{itemize}
\item[$\mathrm{K1}$] $K: = \{a'' \mid a \in L \}$;
\item[$\mathrm{K2}$] $\alpha \cup \beta:$ = $h((e (\alpha) \vee e(\beta))'')$ for all $\alpha, \beta\in K$;
\item[$\mathrm{K3}$] $\alpha \cap \beta:$ = $h (e(\alpha) \wedge e(\beta))$ for all $\alpha, \beta\in K$;
\item[$\mathrm{K4}$]  $1:$  = $h (\top)$;
\item[$\mathrm{K5}$]  $0:$ = $h(\bot)$;
\item[$\mathrm{K6}$] $\alpha^*: $= $h(e(\alpha)')$.
\end{itemize}
\end{definition}

In what follows, to simplify the notation, we omit  as many parentheses as we can without generating ambiguous readings. For example, we write $e(h(a)^*)$ in place of $e((h(a))^*)$, and $eh(a)'$ in place of $(eh(a))'$.
\begin{proposition}\label{prop: algebraic structure on kernels}
If $\mathbb{A} = (L, \wedge, \vee, \top, \bot, ')$ is an  SMA,  then $\mathbb{K}_\mathbb{A}$ is a De Morgan algebra.
\end{proposition}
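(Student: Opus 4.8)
The plan is to verify the De Morgan algebra axioms (D1)--(D5) directly for $\mathbb{K}_\mathbb{A} = (K, \cap, \cup, ^*, 1, 0)$, using the defining clauses (K1)--(K6) together with the retraction identity \eqref{eq:composition} of Lemma \ref{eq:retractions} and the SMA axioms (S1)--(S5). The central computational tool throughout will be the interplay between $h$ and $e$: we have $eh(a) = a''$, $h(a) = h(a'')$, and $he(\alpha) = \alpha$ for all $a \in L$, $\alpha \in K$. A convenient preliminary observation is that $h$ is monotone and that on the image $K$ the operation $e$ is an order-embedding, so order-theoretic facts transfer back and forth; I would establish early that $h(a) \leq h(b)$ whenever $a \leq b$, which follows since $a \leq b$ gives $a'' \leq b''$ by applying (S5) and the antitonicity of $'$ twice.

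First I would prove (D1), that $(K, \cap, \cup, 1, 0)$ is a bounded distributive lattice. For the meet $\cap$ defined by (K3), the key point is that $e(\alpha) \wedge e(\beta)$ already lies in $K$ whenever $\alpha, \beta \in K$: writing $\alpha = a''$, $\beta = b''$, we have $e(\alpha) \wedge e(\beta) = a'' \wedge b'' = (a \wedge b)''$ by (S4), which is in $K$, so $h$ acts on it as the identity via \eqref{eq:composition}, making $\cap$ the genuine lattice meet inherited from $L$. The join $\cup$ is more delicate because $e(\alpha) \vee e(\beta)$ need not lie in $K$, which is exactly why (K2) applies the closure operator $a \mapsto a''$; here I would show that $\cup$ computes the least upper bound of $\alpha, \beta$ within $K$, using that $(\,\cdot\,)''$ is a closure operator on $L$ whose fixed points are precisely $K$ (monotonicity, $a \leq a''$ needs care since it is \emph{not} an SMA axiom in general --- instead I rely on $h e = \mathrm{id}_K$ and idempotency $a'' = a''''$ from (S5)). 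Bounds (K4), (K5) and distributivity then follow by transporting the corresponding properties of $L$ through $h$ and $e$.

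Next I would treat the De Morgan negation axioms. Axiom (D2), namely $0^* = 1$ and $1^* = 0$, reduces to computing $h(e(h(\bot))') = h(\bot'') = h(\top) = 1$ using (S2) twice and similarly for the other. Axiom (D5), the involutivity $\alpha^{**} = \alpha$, is where (S5) does the decisive work: unfolding $\alpha^{**} = h(e(h(e(\alpha)'))')$ and using $e h = (\,\cdot\,)''$ together with $a''' = a'$ should collapse the triple-prime expression back to $\alpha$. The two most error-prone steps, which I expect to be the main obstacle, are the two half--De Morgan laws (D3) $(\alpha \cup \beta)^* = \alpha^* \cap \beta^*$ and (D4) $(\alpha \cap \beta)^* = \alpha^* \cup \beta^*$. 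These require carefully expanding the nested $h$/$e$ compositions in (K2), (K3) and (K6) and then invoking (S3), (S4), (S5) to rewrite expressions such as $(e(\alpha) \vee e(\beta))''{}'$ and matching them against $e(\alpha)' \wedge e(\beta)'$; the difficulty is that the double-prime closures inside $\cup$ must be shown to be harmless under a subsequent prime, and this is precisely the role of the identity $a''' = a'$. I would prove (D3) and (D4) as a pair, since the algebraic manipulations are symmetric, and I expect (D4) to be the trickier of the two because it combines the meet (which is clean) with the join-closure appearing after negation.
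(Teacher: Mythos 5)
Your proposal is correct and, for the De Morgan negation axioms (D2)--(D5), it follows essentially the same route as the paper: unfold (K2), (K3), (K6), and collapse the nested $h$/$e$ compositions using $eh(a)=a''$, $he(\alpha)=\alpha$ (Lemma \ref{eq:retractions}) and (S2)--(S5), with (S5) doing the decisive work in (D5) and the half--De Morgan laws (D3)/(D4) being the longest computations --- exactly as in the paper. Where you genuinely diverge is the lattice part (D1): the paper verifies absorption and distributivity by brute-force equational rewriting, whereas you argue order-theoretically that $\cap$ is the meet inherited from $L$ (via (S4)) and that $\cup$ is the least upper bound inside $K$, treating $(\cdot)''$ as a closure-like operator whose fixed points are $K$; you correctly flag that extensivity $a\le a''$ is unavailable (it is (S6a), holding only in LQMAs) and that one must instead use $\alpha''=\alpha$ on $K$ together with monotonicity. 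This reorganization is attractive, but note that the final step, ``distributivity follows by transporting,'' is the one place that cannot be left implicit: a meet-subsemilattice whose joins are recomputed by a closure operation need \emph{not} inherit distributivity (closure systems on distributive lattices, e.g.\ subgroup lattices, can fail even modularity). What makes the transport legitimate here is that $h$ is a \emph{surjective lattice homomorphism} onto $K$, i.e.\ $h(a\wedge b)=h(a)\cap h(b)$ and $h(a\vee b)=h(a)\cup h(b)$, which requires precisely the (S3)/(S4)/(S5) computations that the paper carries out separately in Proposition \ref{prop:map}; once that is in place, all lattice identities of $L$, including distributivity, descend to $K$ automatically. So your plan works and is arguably cleaner than the paper's direct verification, but it silently front-loads the content of Proposition \ref{prop:map}, which you would need to prove first.
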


\begin{proof}
Let us show that  $\mathbb{K}_\mathbb{A}$ is a distributive lattice. Associativity and commutativity are straightforwardly verified and their corresponding verification is omitted. To show that the absorption law and the distributive law hold, let $\alpha, \beta, \gamma \in K$, and let $a, b, c \in L$ such that (i) $h(a) = \alpha$, (ii) $h(b) = \beta$ and (iii) $h(a) = \gamma$.
\begin{itemize}
\item absorption law:
\begin{center}
\begin{tabular}{lll}
&$\alpha \cup (\alpha \cap \beta)$&\\
=&$h((e(\alpha) \vee e(\alpha \cap \beta))'')$& K2\\
=&$h((e(\alpha) \vee eh(e(\alpha) \wedge e(\beta)))'')$& K3\\
=&$h((e(\alpha) \vee (e(\alpha) \wedge e(\beta))'')'')$&  $eh(a) = a''$\\
=&$h((e(\alpha)' \wedge (e(\alpha) \wedge e(\beta))''')')$& S3\\
=&$h((e(\alpha)''' \wedge (e(\alpha)'' \wedge e(\beta)'')')')$&S5, S4\\
=&$h((e(\alpha)'' \vee (e(\alpha)'' \wedge e(\beta)''))'')$&S3\\
=&$h((e(\alpha)'' \vee (e(\alpha)'''' \wedge e(\beta)''''))$&S4\\
=&$h((e(\alpha)'' \vee (e(\alpha)'' \wedge e(\beta)''))$&S5\\
=&$h(e(\alpha)'')$&S1\\
=&$he(\alpha)$& $h(a) = h(a'')$\\
= &$\alpha$&Lemma \ref{eq:retractions}\\
\end{tabular}
\end{center}

\item distributivity law:
\begin{center}
\begin{tabular}{lll}
&$\alpha \cap (\beta \cup \gamma)$&\\
=&$h(e(\alpha) \wedge e(\beta \cup \gamma))$& K3\\
=&$h(e(\alpha) \wedge eh((e(\beta) \vee e(\gamma))''))$& K2\\
= &$h(e(\alpha) \wedge (e(\beta) \vee e(\gamma))'''')$& $eh(a) = a''$\\
= &$h(eh(a) \wedge (e(\beta) \vee e(\gamma))'''')$& (i)\\
= &$h(a'' \wedge (e(\beta) \vee e(\gamma))'''')$& $eh(a) = a''$\\
= &$h(a'''' \wedge (e(\beta) \vee e(\gamma))'')$&S5\\
= &$h((a'' \wedge (e(\beta) \vee e(\gamma)))'')$&S4\\
= &$h(((a'' \wedge e(\beta)) \vee (a'' \wedge e(\gamma)))'')$&S1\\
= &$h((a'' \wedge eh(b)) \vee (a'' \wedge eh(c))'')$&(ii) and (iii)\\
= &$h(((a'' \wedge b'') \vee (a'' \wedge c''))'')$&$eh(a) = a''$\\
= &$h(((a'''' \wedge b'''') \vee (a'''' \wedge c''''))'')$&S5\\
= &$h(((a'' \wedge b'')'' \vee (a'' \wedge c'')'')'')$&S4\\
= &$h((eh(eh(a) \wedge eh(b)) \vee eh(eh(a) \wedge eh(c)))'')$&$eh(a) = a''$\\
= &$h((eh(e(\alpha) \wedge e(\beta)) \vee eh(e(\alpha) \wedge e(\gamma)))'')$&(i), (ii) and (iii)\\
= &$h(((e(\alpha \cap \beta)) \vee e(\alpha \cap \gamma))'')$&K3\\
= &$(\alpha \cap \beta) \cup (\alpha \cap \gamma)$&K2\\
\end{tabular}
\end{center}
\end{itemize}

Let us show that  $\mathbb{K}_\mathbb{A}$ satisfies (D1)-(D5).

As to (D1), we need to show that $\mathbb{K}_\mathbb{A}$ is bounded:
\begin{center}
\begin{tabular}{lllclll}
&$0 \cap \alpha$&&$\quad$&&$1 \cup \alpha$&\\
=&$h(e(0) \wedge e(\alpha))$& K3&$\quad$&=&$h((e(1) \vee e(\alpha))'')$& K2\\
=&$h(eh(\bot) \wedge e(\alpha))$&K5&$\quad$&=&$h((eh(\top)) \vee e(\alpha))'')$& K4\\
=&$h(\bot'' \wedge e(\alpha))$& $eh(a) = a''$&$\quad$&=&$h((\top'' \vee e(\alpha))'')$& $eh(a) = a''$\\
=&$h(\bot \wedge e(\alpha))$& S2&$\quad$&=&$h((\top \vee e(\alpha))'')$& S2\\
=&$h(\bot)$& S1&$\quad$&=&$h(\top'')$& S5\\
=&$0$& K5&$\quad$&=&$1$& S2, K4\\
\end{tabular}
\end{center}

As to (D2):
\begin{center}
\begin{tabular}{llllllll}
$0^*$& = &$h(e(0)')$&K6&$1^*$&=&$h(e(1)')$& K6\\
&=&$h((eh(\bot))')$& K5&&=&$h((eh(\top))')$& K4\\
&=&$h(\bot''')$& $eh(a) = a''$&&=&$h(\top''')$&$eh(a) = a''$\\
&=&$h(\bot')$& S5&&=&$h(\top')$&S5\\
&=&$h(\top)$& S2&&=&$h(\bot)$& S2\\
&=&$ 1$& K4&&=&$ 0$& K4\\
\end{tabular}
\end{center}

As to (D3):
\begin{center}
\begin{tabular}{llll}
$(\alpha \cup \beta)^*$&=&$h(e(\alpha \cup \beta)')$&K6\\
&= &$h((eh((e(\alpha) \vee e(\beta))'')')$&K2\\
&=&$h((e(\alpha) \vee e(\beta))''''')$&$eh(a) = a''$\\
&=&$h((e(\alpha)' \wedge e(\beta)')'''')$& S3\\
&=&$h((e(\alpha)''' \wedge e(\beta)''')'')$& S5\\
&=&$h((eh(e(\alpha)') \wedge eh(e(\beta)'))'')$&$eh(a) = a''$\\
&=&$h((e(\alpha^*) \wedge e(\beta^*))'')$& K6\\
&=&$heh(e(\alpha^*) \wedge e(\beta^*))$&$eh(a) = a''$\\
&=&$h(e(\alpha^*) \wedge e(\beta^*))$& Lemma \ref{eq:retractions}\\
&=&$\alpha^* \cap \beta^*$& K3\\
\end{tabular}
\end{center}

As to (D4):
\begin{center}
\begin{tabular}{llll}
$(\alpha \cap \beta)^*$&=&$h(e(\alpha \cap \beta)')$& K6\\
&=&$h((eh(e(\alpha) \wedge e(\beta)))')$& K3\\
&=&$h((e(\alpha) \wedge e(\beta))''')$&$eh(a) = a''$ \\
&=&$h((e(\alpha)'' \wedge e(\beta)'')')$& S4\\
&=&$h((e(\alpha)' \vee e(\beta)')'')$& S3\\
&=&$h((eh(a)' \vee eh(b)')'')$& (i) and (ii)\\
&=&$h((a''' \vee b''')'')$& $eh(a) = a''$\\
&=&$h((a''''' \vee b''''')'')$& S5\\
&=&$h((eh(eh(a)') \vee eh(eh(b)'))'')$& $eh(a) = a''$\\
&=&$h((eh(e(\alpha)') \vee eh(e(\beta)'))'')$&(i) and (ii) \\
&=&$h((e(\alpha^*) \vee e(\beta^*))'')$& K6\\
&=&$\alpha^* \cup \beta^*$& K2\\
\end{tabular}
\end{center}

As to (D5):
\begin{center}
\begin{tabular}{llll}
$\alpha^{**}$&=&$h((eh(e(\alpha)'))')$& K6\\
&=&$h(e(\alpha)'''')$& $eh(a) = a''$\\
&=&$h((eh(a))'''')$& (i)\\
&=&$h(a'''''')$& $eh(a) = a''$\\
&=&$h(a'')$& S5\\
&=&$heh(a)$& $eh(a) = a''$\\
&=&$h(a)$&  Lemma \ref{eq:retractions}\\
&=&$\alpha$& (i)\\
\end{tabular}
\end{center}

\end{proof}

\begin{corollary}\label{prop: algebraic structure on kernels of dpm}
If $\mathbb{A} = (L, \wedge, \vee, \top, \bot, ')$ is a DPL, then $\mathbb{K}_\mathbb{A}$ is a Boolean algebra.
\end{corollary}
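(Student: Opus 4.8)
The plan is to leverage Proposition \ref{prop: algebraic structure on kernels}. Since every DPL is in particular an SMA, that proposition already gives that $\mathbb{K}_\mathbb{A}$ is a De Morgan algebra, i.e.\ that it satisfies (D1)--(D5). To promote this to a Boolean algebra it then suffices to verify one of the two equations (B1), (B2); I would aim for (B2), namely $\alpha \cap \alpha^* = 0$ for every $\alpha \in K$, since it maps most directly onto the defining equation (S7) of a DPL.

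First I would unfold $\alpha \cap \alpha^*$ using the kernel definitions. By K3 we have $\alpha \cap \alpha^* = h(e(\alpha) \wedge e(\alpha^*))$, and by K6 together with $eh(x) = x''$ we have $e(\alpha^*) = eh(e(\alpha)') = (e(\alpha)')''$. Fixing $a \in L$ with $h(a) = \alpha$, so that $e(\alpha) = eh(a) = a''$, the inner meet becomes $a'' \wedge (a''')''$; collapsing the chains of primes by repeated use of (S5) (as in the proof of Proposition \ref{prop: algebraic structure on kernels}) this simplifies to $a'' \wedge a'$. Now (S7) gives $a' \wedge a'' = \bot$, whence by K5 we conclude $\alpha \cap \alpha^* = h(\bot) = 0$, which is exactly (B2).

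Finally I would note that (B2) and (B1) are interderivable over a De Morgan algebra: applying $^*$ to $\alpha \cap \alpha^* = 0$ and using (D4), (D5) and (D2) yields $\alpha^* \cup \alpha = 1$. Hence verifying (B2) alone suffices to conclude that $\mathbb{K}_\mathbb{A}$ is a Boolean algebra.

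As for the main obstacle: there is no genuine difficulty here, as the statement is a corollary riding on the preceding proposition. The only thing requiring attention is the prime-counting bookkeeping when simplifying $(e(\alpha)')''$ via (S5), exactly the kind of routine manipulation already carried out in the proof of Proposition \ref{prop: algebraic structure on kernels}.
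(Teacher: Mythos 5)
Your proposal is correct and follows essentially the same route as the paper: invoke Proposition \ref{prop: algebraic structure on kernels} for the De Morgan structure, then verify the complement law $\alpha \cap \alpha^{*} = 0$ using K3, K6, the identity $eh(x) = x''$, S5, S7 and K5. The only cosmetic difference is that you collapse primes on a chosen representative $a$ with $h(a) = \alpha$ so that S7 applies directly to $a' \wedge a''$, whereas the paper works with $e(\alpha)$ itself and uses $h(x) = h(x'')$ together with S4 to bring S7 to bear on $e(\alpha)'$; note also that the paper announces it will verify (B1) but, exactly as you do, actually verifies (B2), which suffices since either equation characterizes Boolean algebras among De Morgan algebras.
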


\begin{proof}
By Proposition \ref{prop: algebraic structure on kernels}, $\mathbb{K}_\mathbb{A}$ is a De Morgan algebra. Hence, it suffices to show that $\mathbb{K}$ satisfies (B1).  For any $\alpha \in \mathbb{K}_\mathbb{A}$,

\begin{center}
\begin{tabular}{llll}
$\alpha \cap \alpha^{*}$& = &$h(\alpha \cap h(e(\alpha)'))$& K3\\
&= &$h(e(\alpha) \wedge eh(e(\alpha)'))$& K6\\
&=&$h(e(\alpha) \wedge e(\alpha)''')$& $eh(a) = a''$\\
&=&$h(e(\alpha) \wedge e(\alpha)')$& S5\\
&=&$heh(e(\alpha) \wedge e(\alpha)')$&  Lemma \ref{eq:retractions}\\
&=&$h((e(\alpha) \wedge e(\alpha)')'')$&  $eh(a) = a''$\\
&=&$h(e(\alpha)'' \wedge e(\alpha)''')$&  S4\\
&=&$h(\bot)$&S7\\
&=&$0$&K5\\
\end{tabular}
\end{center}
\end{proof}

\begin{proposition}\label{prop:map}
Let $\mathbb{A}$ be an SMA (resp.~a DPL), and $e, h$ be defined as above.  Then $h$ is a lattice homomorphism from $\mathbb{A}$ onto $\mathbb{K}_\mathbb{A}$, and  for all $\alpha, \beta\in K$,
$$e(\alpha) \wedge  e(\beta) \,\,\textrm{=}\,\,  e(\alpha\cap \beta)  \quad\quad e(1) \,\,\textrm{=}\,\, \top\quad\quad e(0) \,\,\textrm{=}\,\, \bot.$$
\end{proposition}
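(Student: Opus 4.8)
The plan is to verify the four claims using only the SMA axioms (S1)--(S5), so that the very same argument will cover the DPL case verbatim, a DPL being in particular an SMA. Throughout I will use the two identities recorded before Lemma~\ref{eq:retractions}, namely $eh(a)=a''$ and $h(a)=h(a'')$, together with Lemma~\ref{eq:retractions} itself ($he(\alpha)=\alpha$). I would also first note the useful characterization that, by (K1) and (S5), $K=\{x\in L\mid x=x''\}$ is exactly the set of ``stable'' elements: if $x=a''$ then $x''=a''''=a''=x$ by (S5), and conversely any stable $x$ lies in $K$. In particular $e$ is literally the inclusion, so $e(\alpha)''=e(\alpha)$ for every $\alpha\in K$.

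For the homomorphism claim, surjectivity is immediate, since $h(L)=\{a''\mid a\in L\}=K$ by (K1), and the bounds are preserved by construction, as $h(\top)=1$ and $h(\bot)=0$ are exactly (K4) and (K5). To see that $h$ preserves meets I would compute $h(a)\cap h(b)=h(eh(a)\wedge eh(b))=h(a''\wedge b'')=h((a\wedge b)'')=h(a\wedge b)$, invoking (K3), $eh(a)=a''$, (S4), and $h(x)=h(x'')$. This case is painless because $\cap$ is merely the restriction of $\wedge$ to stable elements.

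The join case is the one genuine calculation, since $\cup$ carries the extra double-prime wrapper of (K2). Here I would compute $h(a)\cup h(b)=h\big((a''\vee b'')''\big)$ by (K2) and $eh(a)=a''$, and then reduce this to $h(a\vee b)$ as follows. Since $h(z'')=h(z)$, it suffices to show $h(a''\vee b'')=h(a\vee b)$, that is $(a''\vee b'')''=(a\vee b)''$. Both sides collapse to $(a'\wedge b')'$: on the left, (S3) followed by (S5) gives $(a''\vee b'')'=a'''\wedge b'''=a'\wedge b'$; on the right, (S3) gives $(a\vee b)'=a'\wedge b'$. This is the only place where the interplay of (S3) and (S5) is essential, and it is the step I expect to require the most care.

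Finally, for the three identities about $e$, the meet identity follows from stability: $e(\alpha\cap\beta)=eh\big(e(\alpha)\wedge e(\beta)\big)=\big(e(\alpha)\wedge e(\beta)\big)''=e(\alpha)\wedge e(\beta)$, where the last equality holds because the meet of two stable elements is again stable by (S4) and (S5). The bound identities are direct: $e(1)=eh(\top)=\top''=\top$ and $e(0)=eh(\bot)=\bot''=\bot$, both by (S2), which yields $\top''=\bot'=\top$ and $\bot''=\top'=\bot$. As every step appeals only to (S1)--(S5), the identical argument establishes the proposition when $\mathbb{A}$ is a DPL.
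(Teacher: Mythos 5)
Your proof is correct and follows essentially the same route as the paper's: surjectivity from (K1), meet preservation via (S4), join preservation via the (S3)/(S5) interplay collapsing both $(a''\vee b'')''$ and $(a\vee b)''$ to $(a'\wedge b')'$, and the $e$-identities via (S2) and (S4). Your ``stable elements'' characterization of $K$ is just a convenient repackaging of the paper's identities $eh(a)=a''$ and $he(\alpha)=\alpha$, so the two arguments coincide in substance.
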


\begin{proof}
It is an immediate consequence of K1 that $h$ is surjective. We need to show that $h$ is a lattice homomorphism. For any $a, b \in L$,
\begin{center}
\begin{tabular}{lllclll}
&$h(a \wedge b)$&&$\,\,$&&$h(a \vee b)$&\\
=&$heh(a \wedge b)$& Lemma \ref{eq:retractions}&&=&$heh(a \vee b)$&  Lemma \ref{eq:retractions}\\

=&$h((a \wedge b)'')$& $eh(a) = a''$&&=&$h((a \vee b)'') $&$eh(a) = a''$ \\

=&$h(a'' \wedge b'')$&S4&&=& $h((a' \wedge b')')$ & S3\\

=&$h(eh(a) \wedge eh(b) )$& $eh(a) = a''$&&=& $h(a''' \wedge b''')'$ & S5\\

=&$h(a) \cap h(b)$& K3&&=& $h(a'' \vee b'')''$ & S3\\

&&&&=& $h((eh(a) \vee eh(b))'')$ & $eh(a) = a''$ \\
&&&&=& $h(a) \cup h(b)$ & K2\\

\end{tabular}
\end{center}

Moreover, $h(\bot) = \bot'' = \bot$ and  $h(\top) = \top'' = \top$. This completes the proof that $h$ is a homomorphism from $\mathbb{A}$ to $\mathbb{K}_\mathbb{A}$. Next, we show that $e(\alpha) \wedge  e(\beta) =  e(\alpha \cap \beta)$.  For any $\alpha, \beta \in K$,
\begin{center}
\begin{tabular}{llll}
$e(\alpha \cap \beta)$& =&$eh(e(\alpha) \wedge e(\beta))$& K3\\
&=&$(e(\alpha) \wedge e(\beta))''$&$eh(a) = a''$\\
&=&$e(\alpha)'' \wedge e(\beta)''$& S4\\
&=&$ehe(\alpha) \wedge ehe(\beta)$&$eh(a) = a''$\\
&=&$e(\alpha) \wedge e(\beta)$& Lemma \ref{eq:retractions}\\
\end{tabular}
\end{center}
Finally, $e(0) = eh(\bot) = \bot'' = \bot$ and $e(1) = eh(\top) = \top'' = \top$ are straightforward consequences of  (K4), (K5) and (S2).
\end{proof}



In what follows, we will drop the subscript of the kernel whenever it does not cause confusion.

\subsection{Heterogeneous SMAs as equivalent presentations of  SMAs}
\label{Heterogeneous presentation}
\begin{definition}
\label{def:heterogeneous algebras}
A {\em heterogeneous semi De Morgan algebra} (HSMA) is a tuple $(\mathbb{L}, \mathbb{D}, e, h)$ satisfying the following conditions:
\begin{enumerate}
\item[$(\mathrm{H1})$] $\mathbb{L}$ is a bounded distributive lattice;
\item[$(\mathrm{H2a})$] $\mathbb{D}$  is a De Morgan algebra;
\item[$(\mathrm{H3})$] $e: \mathbb{D}\hookrightarrow \mathbb{L}$  is an order embedding, and for all $\alpha_1,\alpha_2 \in \mathbb{D}$,
\begin{itemize}
  \item[-] $e(\alpha_1)\wedge e(\alpha_2)  =  e(\alpha_1 \cap \alpha_2) $;
  \item[-] $e(1) = \top$, $e(0) = \bot$.
\end{itemize}
\item[$(\mathrm{H4})$] $h: \mathbb{L}\twoheadrightarrow \mathbb{D}$  is a surjective lattice homomorphism;
\item[$(\mathrm{H5})$] $he(\alpha)$ = $\alpha$ for every $\alpha\in \mathbb{D}$.\footnote{Condition (H5) implies that $h$ is surjective and $e$ is injective.}
\end{enumerate}

A {\em heterogeneous  lower quasi De Morgan algebra} (HLQMA) is an HSMA satisfying the following condition:
\begin{itemize}
\item[$\mathrm{(H6a)}$] $a \leq eh(a)$ for any $a \in L$.
\end{itemize}

A {\em heterogeneous  upper quasi De Morgan algebra} (HUQMA) is an HSMA satisfying the following condition:
\begin{itemize}
\item[$\mathrm{(H6b)}$] $eh(a) \leq a$ for any $a \in L$.
\end{itemize}
A {\em heterogeneous demi pseudocomplemented lattice} (HDPL) is defined analogously, except replacing (H2a)  with the following condition (H2b):
\begin{enumerate}
\item[$(\mathrm{H2b})$] $\mathbb{D}$  is a Boolean algebra.
\end{enumerate}

A {\em heterogeneous almost pseudocomplemented lattice} (HAPL) is an HDPL satisfying the following condition:
 \begin{itemize}
\item[$(\mathrm{H7})$] $e(h(a)^*) \wedge a = \bot$ for all $a \in \mathbb{L}$.
\end{itemize}
 A {\em heterogeneous weak Stone algebra} (HWSA) is an HDPL satisfying  the following condition:
 \begin{itemize}
\item[$(\mathrm{H8})$] $e(\alpha^*) \vee e(\alpha) = \top$ for all $\alpha \in \mathbb{A}$.
\end{itemize}

$$
\begin{tikzpicture}[node/.style={circle, draw, fill=black}, scale=1]
\node (DL) at (-1.5,-1.5) {$\mathbb{L}$};
\node (DM) at (1.5, -1.5) {$\mathbb{D}$};
\node (a) at (2.5,-1.58) {$^\ast$};
\draw [->]  (DM)  to [out= 30,in= -30,looseness = 7](DM);
\draw [->>] (DL)  to node[below] {$h$}  (DM);
\draw [right hook->] (DM) to [out=135, in = 45, looseness=1]   node[above] {$e$}  (DL);
\end{tikzpicture}
$$
An HSMA (resp.~HLQMA,HUQMA, HDPL, HAPL and HWSA) is {\em perfect} if:
\begin{enumerate}
\item[$(\mathrm{PH1})$] both $\mathbb{L}$ and $\mathbb{D}$ are perfect as a distributive lattice and De Morgan algebra (or~Boolean algebra), respectively (see Definition \ref{def: perfect algebra});
\item[$(\mathrm{PH2})$] $e$ is an order-embedding and is completely meet-preserving;
\item[$(\mathrm{PH3})$] $h$ is a complete homomorphism.
\end{enumerate}
\end{definition}

\begin{definition}\label{def: aplus}
For any SMA (resp.~LQMA, UQMA, DPL, APL and WSA) $\mathbb{A}$, let $$\mathbb{A}^+:= (\mathbb{L}, \mathbb{K}, e, h),$$
where $\mathbb{L}$ is the lattice reduct of $\mathbb{A}$, $\mathbb{K}$ is the kernel of $\mathbb{A}$ (cf.~Definition \ref{def:kernel}), and $e: \mathbb{K} \to \mathbb{L} $ and $ h: \mathbb{L} \to \mathbb{K}$ are defined as in the beginning of Section \ref{ssec:kernel}.
\end{definition}

\begin{proposition}
\label{prop:from single to multi}
 If $\mathbb{A}$ is an SMA (resp.~DPL), then $\mathbb{A}^+$ is an HSMA (resp.~HDPL).
\end{proposition}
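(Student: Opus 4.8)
The plan is to verify directly that $\mathbb{A}^+ = (\mathbb{L}, \mathbb{K}, e, h)$ satisfies each of the defining conditions (H1)--(H5) of an HSMA (and, in the DPL case, with (H2a) replaced by (H2b)), observing that almost every clause has already been established in the preceding results. First I would record that (H1) is immediate: by (S1) the lattice reduct $\mathbb{L}$ of the SMA $\mathbb{A}$ is a bounded distributive lattice. Condition (H2a), namely that $\mathbb{K}$ is a De Morgan algebra, is precisely the content of Proposition \ref{prop: algebraic structure on kernels}. Condition (H4), that $h\colon \mathbb{L} \twoheadrightarrow \mathbb{K}$ is a surjective lattice homomorphism, together with the meet-preservation identity $e(\alpha)\wedge e(\beta) = e(\alpha \cap \beta)$ and the boundary identities $e(1) = \top$ and $e(0) = \bot$ appearing in (H3), are all established in Proposition \ref{prop:map}. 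Finally, (H5), the retraction identity $he(\alpha) = \alpha$ for $\alpha \in \mathbb{K}$, is exactly Lemma \ref{eq:retractions}.

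The only clause not handed to us verbatim is the requirement in (H3) that $e$ be an \emph{order} embedding, so this is where a small argument is needed. Injectivity is built into the definition, since $e\colon \mathbb{K}\hookrightarrow \mathbb{L}$ is the natural inclusion of $K = \{a'' \mid a\in L\}$ into $L$. For order-preservation, suppose $\alpha \leq \beta$ in $\mathbb{K}$, i.e.\ $\alpha \cap \beta = \alpha$; then by the meet-preservation clause of Proposition \ref{prop:map} we get $e(\alpha)\wedge e(\beta) = e(\alpha\cap\beta) = e(\alpha)$, so $e(\alpha)\leq e(\beta)$ in $\mathbb{L}$. For order-reflection, suppose $e(\alpha)\leq e(\beta)$, i.e.\ $e(\alpha)\wedge e(\beta) = e(\alpha)$; then $e(\alpha\cap\beta) = e(\alpha)\wedge e(\beta) = e(\alpha)$, and injectivity of $e$ yields $\alpha\cap\beta = \alpha$, that is $\alpha\leq\beta$. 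Thus $\alpha\leq\beta$ iff $e(\alpha)\leq e(\beta)$, as required.

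For the DPL case, the argument is identical except that one must certify (H2b) in place of (H2a): when $\mathbb{A}$ is a DPL, its kernel $\mathbb{K}$ is a Boolean algebra by Corollary \ref{prop: algebraic structure on kernels of dpm}. All the remaining clauses (H1), (H3), (H4), (H5) were proved for an arbitrary SMA (and Proposition \ref{prop:map} is stated for both the SMA and the DPL case), so they transfer without change.

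In short, there is no genuine obstacle here: the proposition is essentially a bookkeeping assembly of Lemma \ref{eq:retractions}, Proposition \ref{prop: algebraic structure on kernels}, Corollary \ref{prop: algebraic structure on kernels of dpm}, and Proposition \ref{prop:map}. The single step requiring independent reasoning is the verification that $e$ reflects the order, and even that reduces immediately to the meet-preservation identity plus injectivity of the inclusion. I would therefore present the proof as a short checklist matching each condition (H1)--(H5) to its source, inserting the two-line order-embedding argument inline under (H3).
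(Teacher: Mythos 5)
Your proof is correct and takes essentially the same route as the paper, whose entire proof reads ``It immediately follows from Proposition \ref{prop: algebraic structure on kernels} and Proposition \ref{prop:map}.'' Your version is merely more explicit: you supply the order-embedding verification for $e$ (which the cited propositions do not literally state, so this is a worthwhile addition) and you cite Corollary \ref{prop: algebraic structure on kernels of dpm} for the DPL case, both of which the paper leaves implicit.
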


\begin{proof}
It immediately follows from Proposition \ref{prop: algebraic structure on kernels} and Proposition \ref{prop:map}.
\end{proof}

\begin{corollary}\label{cor:sub single to multi}
 If $\mathbb{A}$ is an LQMA (resp.~UQMA, APL and WSA), then $\mathbb{A}^+$ is an HLQMA (resp.~HUQMA, HAPL and HWSA).
\end{corollary}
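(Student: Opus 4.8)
The plan is to handle the four implications separately, in each case first placing $\mathbb{A}$ inside the framework already established and then verifying the single extra condition that characterizes the corresponding heterogeneous subvariety. Throughout I would lean on the two translation facts recorded in Section~\ref{ssec:kernel}, namely $eh(a) = a''$ for every $a \in L$ and $\alpha^* = h(e(\alpha)')$ from (K6), together with (S5), which collapses towers of primes via $a''' = a'$.

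For LQMA and UQMA the verification is immediate. Since every LQMA (resp.~UQMA) is in particular an SMA, Proposition~\ref{prop:from single to multi} already yields that $\mathbb{A}^+$ is an HSMA, so it only remains to check (H6a) (resp.~(H6b)). But $eh(a) = a''$, so the heterogeneous condition $a \leq eh(a)$ (resp.~$eh(a) \leq a$) is literally (S6a) (resp.~(S6b)), which holds by hypothesis.

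For APL and WSA I would first observe that, by Proposition~\ref{fact: s7}, every APL and every WSA satisfies (S7) and hence is a DPL; thus Proposition~\ref{prop:from single to multi}, via Corollary~\ref{prop: algebraic structure on kernels of dpm}, guarantees that $\mathbb{A}^+$ is an HDPL, i.e.~$\mathbb{K}$ is a Boolean algebra. It then suffices to verify (H7) in the APL case and (H8) in the WSA case. The key preliminary computation in both cases is to simplify $e(\alpha^*)$: using (K6), the identity $eh(a) = a''$, and (S5), one gets $e(\alpha^*) = eh(e(\alpha)') = e(\alpha)''' = e(\alpha)'$, and in particular $e(h(a)^*) = (eh(a))' = (a'')' = a'$. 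For APL, substituting this into (H7) turns $e(h(a)^*) \wedge a = \bot$ into $a' \wedge a = \bot$, which is exactly (S8). For WSA, I would additionally note that every $\alpha \in K$ satisfies $e(\alpha) = e(\alpha)''$, since elements of the kernel are fixed by double-prime (again by (S5)); substituting $e(\alpha^*) = e(\alpha)'$ into (H8) then yields $e(\alpha^*) \vee e(\alpha) = e(\alpha)' \vee e(\alpha)'' = \top$, which is (S9) applied to the element $e(\alpha)$.

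No step presents a genuine obstacle; the only care required lies in the APL and WSA cases, where one must correctly rewrite the heterogeneous operation $^*$ back into the single-type operation $'$ before invoking (S8) or (S9). I expect this prime-counting to be the most error-prone part, but it is entirely routine once (K6), the identity $eh(a) = a''$, and (S5) are in hand.
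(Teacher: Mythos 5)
Your proof is correct and follows essentially the same route as the paper's: reduce each case to the single characteristic heterogeneous condition, identify (H6a)/(H6b) with (S6a)/(S6b) via $eh(a)=a''$, and for APL/WSA rewrite $e(\alpha^*)$ as $e(\alpha)'$ using (K6), $eh(a)=a''$ and (S5) before invoking (S8)/(S9). Your explicit appeal to Proposition~\ref{fact: s7} to place $\mathbb{A}^+$ among the HDPLs is a detail the paper leaves implicit, but it is the same argument.
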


\begin{proof}
 If $\mathbb{A}$ is an LQMA, by Proposition \ref{prop:from single to multi}, it suffices to show that $\mathbb{A}_+$ satisfies (H6a).  By (S6a) and H5, it is easy to see $a \leq e(h(a))$. The argument is dual when $\mathbb{A}$ is a UQMA. If $\mathbb{A}$ is an APL,  it suffices to show $\mathbb{A}^+$ satisfies (H7).
\begin{center}
\begin{tabular}{lll}
&$e(h(a)^*) \wedge a$&\\
=&$eh((eh(a))') \wedge a$& K6\\
=&$a^{'''''} \wedge a$& $eh(a) = a''$\\
=&$a' \wedge a$& S5\\
=&$\bot$& S8\\
\end{tabular}
\end{center}
If $\mathbb{A}$ is a WSA, it suffices to show $\mathbb{A}^+$ satisfies (H8).
\begin{center}
\begin{tabular}{lll}
&$e(\alpha^*) \vee e(\alpha)$&\\
=&$eh(e(\alpha)') \vee e(\alpha)$& K6\\
=&$eh(e(\alpha)') \vee ehe(\alpha)$&Lemma  \ref{eq:retractions} \\
=&$e(\alpha)''' \vee ehe(\alpha)$&$eh(a) = a''$\\
=&$e(\alpha)''' \vee e(\alpha)''$&Lemma  \ref{eq:retractions} \\
=&$e(\alpha)' \vee e(\alpha)''$&S5 \\
=&$\top$&S9\\
\end{tabular}
\end{center}
\end{proof}

\begin{definition}
For any HSMA  (resp.~HLQMA, HUQMA,  HDPL, HAPL and HWSA) $\mathbb{H} = (\mathbb{L}, \mathbb{D}, e, h)$, let $$\mathbb{H}_+:= (\mathbb{L}, \, '),$$
where $': \mathbb{L}\rightarrow \mathbb{L}$  is defined by the assignment $a' \mapsto e(h(a)^*)$.
\end{definition}

\begin{proposition}
\label{prop:reverse engineering}
If  $\mathbb{H}$ is an HSMA (resp.~HDPL), then $\mathbb{H}_+$ is an SMA (resp.~DPL). Moreover, $\mathbb{K}_{\mathbb{H}^+}\cong \mathbb{K}$.
\end{proposition}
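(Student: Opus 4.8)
The plan is to verify the five axioms (S1)--(S5) for $\mathbb{H}_+ = (\mathbb{L}, \,')$, where $a' = e(h(a)^*)$, using only the HSMA conditions (H1)--(H5), and then separately handle the DPL claim and the isomorphism $\mathbb{K}_{\mathbb{H}^+} \cong \mathbb{K}$. Axiom (S1) is immediate from (H1), since the lattice reduct of $\mathbb{H}_+$ is exactly $\mathbb{L}$, which is a bounded distributive lattice. The remaining axioms are unpacked by substituting the definition of $'$ and pushing the computation into $\mathbb{D}$ via the interaction conditions in (H3), (H4), (H5). Throughout I would freely use that $he(\alpha) = \alpha$ (H5), that $h$ is a lattice homomorphism (H4), and that $e$ preserves $\wedge$, $1$, $0$ (H3), together with the De Morgan identities (D1)--(D5) which hold in $\mathbb{D}$ by (H2a).

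First I would treat (S2): $\bot' = e(h(\bot)^*) = e((h(\bot))^*)$. Since $h(\bot) = 0$ (as $h$ is a bounded-lattice homomorphism, or directly since $e(0)=\bot$ and (H5) give $h(\bot) = he(0) = 0$) and $0^* = 1$ in $\mathbb{D}$, this equals $e(1) = \top$; dually $\top' = \bot$. For (S3), I would compute $(a \vee b)' = e(h(a\vee b)^*) = e((h(a)\cup h(b))^*)$ using that $h$ preserves $\vee$, then apply (D3) to get $e(h(a)^* \cap h(b)^*)$, and finally use the $\wedge$-preservation clause of (H3) to rewrite this as $e(h(a)^*) \wedge e(h(b)^*) = a' \wedge b'$. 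This is the step where the asymmetry of the conditions matters: $e$ is only assumed to preserve meets, which is exactly why $\vee$ must be converted to $\cap$ via De Morgan before pulling $e$ out. For (S5), $a''' = a'$ reduces to $h(a)^{***} = h(a)^*$ in $\mathbb{D}$, which follows from (D5). The key computational move recurring in (S4) and (S5) is the identity $h(a') = h(a)^*$: unfolding, $h(a') = h(e(h(a)^*)) = he(h(a)^*) = h(a)^*$ by (H5), so that applying $'$ on the $\mathbb{L}$ side corresponds exactly to applying $^*$ on the $\mathbb{D}$ side through $h$.

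The main obstacle I anticipate is (S4), $(a\wedge b)'' = a'' \wedge b''$, because it mixes double negation with meet and therefore requires the most careful bookkeeping. The strategy is to use $h(a') = h(a)^*$ to get $a'' = e(h(a')^*) = e(h(a)^{**})$, and by (D5) this is $e(h(a))$; hence $a'' = eh(a)$, so double negation on $\mathbb{L}$ is just the composite $eh$. Then $(a\wedge b)'' = eh(a\wedge b) = e(h(a)\cap h(b))$ by (H4), which equals $e(h(a)) \wedge e(h(b)) = a'' \wedge b''$ by the meet-preservation of $e$ in (H3). This simultaneously settles (S4) and gives the useful equation $eh(a) = a''$ for $\mathbb{H}_+$, matching the computation in Section~\ref{ssec:kernel}. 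For the DPL case, once (S1)--(S5) are established so that $\mathbb{H}_+$ is an SMA, I would verify (S7): using $a' = e(h(a)^*)$ and $a'' = eh(a)$, compute $a' \wedge a'' = e(h(a)^*) \wedge e(h(a)) = e(h(a)^* \cap h(a))$ by (H3), and since $\mathbb{D}$ is Boolean by (H2b) this meet is $0$, so $a'\wedge a'' = e(0) = \bot$. Finally, for $\mathbb{K}_{\mathbb{H}_+} \cong \mathbb{K}$, I would observe that the carrier of $\mathbb{K}_{\mathbb{H}_+}$ is $\{a'' \mid a \in \mathbb{L}\} = \{eh(a) \mid a \in \mathbb{L}\} = e[\mathbb{D}]$ (using surjectivity of $h$), and then exhibit $e$ (corestricted to its image) as the isomorphism: it is an order-embedding by (H3), and one checks it transports the kernel operations $\cap, \cup, {}^*, 1, 0$ of Definition~\ref{def:kernel} to those of $\mathbb{D}$, using the identities just derived. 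I expect this last verification to be routine given the preceding identities, with the only mild care needed in matching the $\cup$ defined via double negation in K2 against the join of $\mathbb{D}$.
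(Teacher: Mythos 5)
Your proposal is correct and follows essentially the same route as the paper's proof: (S1) directly from (H1), the same (S2)--(S5) and (S7) computations obtained by unfolding $a' = e(h(a)^*)$ through (H2)--(H5), and the isomorphism realized by corestricting $e$ to $K = \mathsf{Range}('') = \mathsf{Range}(e\circ h) = \mathsf{Range}(e)$, which is exactly the paper's order-isomorphism $f$ with $e = e_k \circ f$. Your isolation of the identities $h(a') = h(a)^*$ and $a'' = eh(a)$ merely streamlines what the paper unfolds inline, and plays the same role as its auxiliary identities $\mathrm{K2}_{\mathbb{D}}$--$\mathrm{K6}_{\mathbb{D}}$ in the final (indeed routine) verification that the kernel operations are transported to those of $\mathbb{D}$.
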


\begin{proof}
Since $\mathbb{H}$ is an HSMA by assumption, $\mathbb{L}$ is a bounded distributive lattice, hence it suffices to show that the operation $'$ satisfies (S2)-(S5) (cf.~Definition \ref{def:sdm}).  

\begin{itemize}
\item As to (S2):

\begin{center}
\begin{tabular}{llllllll}
$\bot' $&=&$e(h(\bot)^*)$& definition of $'$ &$\top'$&=&$e(h(\top)^*)$& definition of $'$\\
&=&$e(0^*)$& H3&&=&$e(1^*)$&H3\\
&=&$e(1)$& H2a&&=& $e(0)$ &H2a\\
&=&$\top$& H3&&=& $\bot$ &H3\\
\end{tabular}
\end{center}

\item As to (S3):
\begin{center}
\begin{tabular}{llll}
$(a \vee b)' $&=&$e(h(a \vee b)^*)$& definition of $'$\\
&=&$e((h(a) \cup h(b))^*)$&H4\\
&=&$e(h(a)^* \cap h(b)^*)$&H2a\\
&=&$e(h(a)^*) \wedge e(h(b)^*)$&H3\\
&=&$a' \wedge b'$& definition of $'$\\
\end{tabular}
\end{center}

\item As to (S4):
\begin{center}
\begin{tabular}{llll}
$(a \wedge b)'' $&=&$ e((he(h(a \wedge b)^*))^*)$& definition of $'$\\
&=&$e(h(a \wedge b)^{**})$& H5\\
&=&$eh(a \wedge b)$& H2a\\
&=&$e(h(a) \cap h(b))$& H4\\
&=&$eh(a) \wedge eh(b)$& H3\\
&=&$e(h(a)^{**}) \wedge e(h(b)^{**})$& H2a\\
&=&$e((he(h(a)^*))^*) \wedge e((he(h(b)^*))^*) $& H5\\
&=&$ a'' \wedge b''$&definition of $'$\\
\end{tabular}
\end{center}

\item As to (S5):
\begin{center}
\begin{tabular}{llll}
$a''' $&=&$e((he((he(h(a)^*))^*))^*)$&definition of $'$\\
&=&$e(h(a)^{***}) $& H5\\
&=&$e(h(a)^*)$& H2a\\
&=&$a'$& definition of $'$\\
\end{tabular}
\end{center}
\end{itemize}

Hence, $(\mathbb{L}, ')$ is a semi De Morgan algebra. If  $(\mathbb{L}, \mathbb{D}, e, h)$ is an HDPL, we  also need to show that $'$ satisfies (S7):
\begin{center}
\begin{tabular}{llll}
$a' \wedge a'' $&= &$e(h(a)^*) \wedge e((he(h(a)^*))^*)$& definition of $'$\\
&= &$e(h(a)^*) \wedge e(h(a)^{**})$&H5\\
&= &$e(h(a)^*) \wedge eh(a)$& H2a\\
&= &$e(h(a)^* \cap h(a))$& H3\\
&= &$e(0)$& H2a\\
&= &$\bot$& H3\\
\end{tabular}
\end{center}
which completes the proof that (L, $'$) is a DPL.  As to the second part of the statement, let us show preliminarily that the following identities hold:

\begin{itemize}
\item[$\mathrm{K2}_{\mathbb{D}}$.] $\alpha \cup \beta = h((e (\alpha) \vee e(\beta))'')$ for all $\alpha, \beta\in \mathbb{D}$;
\item[$\mathrm{K3}_{\mathbb{D}}$.] $\alpha \cap \beta = h (e(\alpha) \wedge e(\beta))$ for all $\alpha, \beta\in \mathbb{D}$;
\item[$\mathrm{K4}_{\mathbb{D}}$.]  $1 = h (\top)$;
\item[$\mathrm{K5}_{\mathbb{D}}$.]  $0 = h(\bot)$;
\item[$\mathrm{K6}_{\mathbb{D}}$.] $\alpha^*  = h(e(\alpha)')$.
\end{itemize}

As to $\mathrm{K2}_{\mathbb{D}}$,
\begin{center}
\begin{tabular}{llll}
$h((e (\alpha) \vee e(\beta))'')$&=&$he((he(h(e (\alpha) \vee e(\beta))^*))^*)$&definition of $'$\\
&=&$(h(e (\alpha) \vee e(\beta)))^{**}$&  H5\\
&=&$h(e(\alpha) \vee e(\beta))$& H2a\\
&=&$he(\alpha) \cup he(\beta)$&  H4\\
&=&$\alpha \cup \beta$ &H5 \\
\end{tabular}
\end{center}
Conditions  $\mathrm{K3}_{\mathbb{D}}$,$\mathrm{K4}_{\mathbb{D}}$ and $\mathrm{K5}_{\mathbb{D}}$ easily follow from H4, H5 and H3, and their proofs are omitted.

As to $\mathrm{K6}_{\mathbb{D}}$,
\begin{center}
\begin{tabular}{llll}
$h(e(\alpha)')$&=&$he((he(\alpha))^*)$& definition of $ '$\\
&=&$\alpha^*$&  H5\\
\end{tabular}
\end{center}

To show that $\mathbb{D}$ and $\mathbb{K}$ are isomorphic to each other, notice that the domain of $\mathbb{K}$ is defined as $K :=  \mathsf{Range}('') = \mathsf{Range}(e\circ ^* \circ h \circ e\circ ^* \circ h) = \mathsf{Range}(e\circ h)$. Since by assumption $h$ is surjective, $K = \mathsf{Range}(e)$, and since $e$ is an order embedding, $K$, regarded as a sub-poset of $\mathbb{L}$, is order-isomorphic to the domain of  $\mathbb{D}$ with its lattice order. Let $f: \mathbb{D} \rightarrow  \mathbb{K}$ denote the order-isomorphism between $\mathbb{D}$ and $\mathbb{K}$. Define $e_k: \mathbb{K} \hookrightarrow \mathbb{L}$ and $h_k: \mathbb{L} \twoheadrightarrow \mathbb{K}$ as as in the beginning of Section \ref{ssec:kernel}. Thus, $e = e_k \circ f$ and $h_k = f \circ h$. We need to show that: for all $\alpha,\beta \in \mathbb{D}$, let $\cap_k, \cup_k,\, ^{\ast_k}$ denote the operations on K,

\begin{enumerate}
\item $f(\alpha) \cap_k f(\beta) = f(\alpha \cap \beta)$,
\begin{center}
\begin{tabular}{llll}
$f(\alpha) \cap_k f(\beta)$&=&$h_k(e_k f(\alpha) \wedge e_kf(\beta)) $& definition of $\cap_k$\\
&=&$fh(e_kf(\alpha) \wedge e_kf(\beta)) $& $h_k = f \circ h$\\
&=&$fh(e(\alpha) \wedge e(\beta)) $& $e = e_k \circ f$\\
&=&$f(\alpha \cap \beta) $& $\mathrm{K3}_{\mathbb{D}}$\\
\end{tabular}
\end{center}

\item $f(\alpha) \cup_k f(\beta) = f(\alpha \cap \beta)$,
\begin{center}
\begin{tabular}{llll}
$f(\alpha) \cup_k f(\beta)$&=&$h_k((e_kf(\alpha) \vee e_kf(\beta))'') $& definition of $\cup_k$\\
&=&$fh((e_kf(\alpha) \vee e_kf(\beta))'')$&$h_k = f \circ h$\\
&=&$fh((e(\alpha)\vee e(\beta))'') $& $e = e_k \circ f$\\
&=&$f(\alpha \cup \beta) $& $\mathrm{K2}_{\mathbb{D}}$\\
\end{tabular}
\end{center}

\item $f(\alpha)^{*_k} = f(\alpha^*)$,
\begin{center}
\begin{tabular}{llll}
$(f(\alpha))^{*_k}$&=&$h_k((e_kf(\alpha))') $& definition of $^{\ast_k}$\\
&=&$fh((e_kf(\alpha))') $&$h_k = f \circ h$\\
&=&$fh(e(\alpha)')  $& $e = e_k \circ f$\\
&=&$f(\alpha^*) $& $\mathrm{K6}_{\mathbb{D}}$\\
\end{tabular}
\end{center}
\end{enumerate}

Hence, $f: \mathbb{D} \rightarrow \mathbb{K}$ is an isomorphism of De Morgan algebras (resp.~Boolean algebras). This completes the proof.
\end{proof}

\begin{corollary}\label{cor:sub multi to single}
 If $\mathbb{H}$ is an HLQMA (resp.~HUQMA, HAPL and HWSA), then $\mathbb{A}^+$ is a LQMA (resp.~UQMA, APL and WSA).
\end{corollary}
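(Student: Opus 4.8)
The plan is to treat this exactly as the converse of Corollary~\ref{cor:sub single to multi}: use Proposition~\ref{prop:reverse engineering} as the base case (HSMA $\to$ SMA and HDPL $\to$ DPL) and then verify, in each branch, the single extra (in)equation that cuts out the relevant subvariety. Throughout I write $\mathbb{H}_+ = (\mathbb{L}, {}')$ for the single-type algebra built from $\mathbb{H}$, whose negation is given by $a' = e(h(a)^*)$.

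The computational core that feeds every case is the identity $a'' = eh(a)$, valid in $\mathbb{H}_+$ for all $a \in \mathbb{L}$. To obtain it, I would unfold the definition of $'$ twice, use (H5) to collapse $h(e(h(a)^*)) = h(a)^*$, arriving at $a'' = e(h(a)^{**})$, and then invoke (D5) in the De Morgan (resp.~Boolean) algebra $\mathbb{D}$ to rewrite $h(a)^{**} = h(a)$, so that $a'' = e(h(a)) = eh(a)$. This is nothing but the single-element specialization of the chain already spelled out in the (S4) case of the proof of Proposition~\ref{prop:reverse engineering}, so no new work is needed.

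With this identity available, each branch is a one-line reduction. If $\mathbb{H}$ is an HLQMA, then $\mathbb{H}_+$ is an SMA by Proposition~\ref{prop:reverse engineering}, and the only missing condition is (S6a); since $a'' = eh(a)$, the inequality $a \leq a''$ is literally (H6a). The HUQMA case is dual, with (S6b) reducing to (H6b). If $\mathbb{H}$ is an HAPL then it is in particular an HDPL, so $\mathbb{H}_+$ is a DPL by Proposition~\ref{prop:reverse engineering}; to upgrade this to an APL it suffices to check (S8), and $a \wedge a' = a \wedge e(h(a)^*) = \bot$ is immediate from (H7). Finally, if $\mathbb{H}$ is an HWSA then $\mathbb{H}_+$ is a DPL, and (S9) follows from $a' \vee a'' = e(h(a)^*) \vee eh(a) = \top$, which is (H8) applied to $\alpha = h(a)$.

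I do not anticipate a genuine obstacle; the only two points that need care are the preliminary identity $a'' = eh(a)$ (the hinge of the LQMA, UQMA, and WSA branches) and the structural remark that an SMA validating (S8) (resp.~(S9)) is automatically an APL (resp.~WSA). The latter is what lets me conclude from ``DPL together with the extra equation'', and it is guaranteed by Proposition~\ref{fact: s7}, since (S7) holds in every APL and WSA. Everything else is routine unfolding of the definition of $'$ together with (H5)--(H8).
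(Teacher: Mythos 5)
Your proof is correct and takes essentially the same route as the paper's: invoke Proposition~\ref{prop:reverse engineering} for the SMA/DPL base, then derive (S6a), (S6b), (S8) and (S9) from (H6a), (H6b), (H7) and (H8) respectively via the computation $a'' = e((he(h(a)^*))^*) = e(h(a)^{**}) = eh(a)$, which the paper redoes inline in each branch rather than isolating as a standing identity as you do. The only blemish is your closing appeal to Proposition~\ref{fact: s7}, which is superfluous and points the wrong way: since a DPL is by definition an SMA, a DPL satisfying (S8) (resp.\ (S9)) is an APL (resp.\ WSA) purely by definition, whereas Proposition~\ref{fact: s7} is needed only in the converse direction (Corollary~\ref{cor:sub single to multi}), where one must know that every APL and WSA satisfies (S7).
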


\begin{proof}
By Proposition \ref{prop:reverse engineering}, if $\mathbb{H}$ is an HLQMA,  it suffices to show that  $\mathbb{H}_+$ satisfies (S6a).
\begin{center}
\begin{tabular}{lll}
&$a \leq eh(a) $& H6a\\
iff &$a \leq e(h(a)^{**})$&H2a\\
iff &$a \leq e((he(h(a)^{*}))^*)$&H5\\
iff &$a \leq a''$& definition of $'$\\
\end{tabular}
\end{center}
If $\mathbb{H}$ is an HUQMA, the argument is dual.  If $\mathbb{H}$ is an HAPL, it is clear that $\mathbb{H}_+$ satisfies (S8) by (H7).  If $\mathbb{H}$ is an HWSA, it suffices to show that $\mathbb{H}_+$ satisfies (S6).
\begin{center}
\begin{tabular}{lll}
&$a' \vee a'' $&\\
= &$e(h(a)^*) \vee e((he(h(a)^{*}))^*)$& def. of $'$\\
= &$e(h(a)^*) \vee e(h(a)^{**})$&Lemma  \ref{eq:retractions} \\
= &$e(h(a)^*) \vee eh(a)$& H2a\\
= &$\top$& H8\\

\end{tabular}
\end{center}
\end{proof}

\begin{proposition}
\label{prop:Aplus plus}
For any SMA (resp.~LQMA, UQMA, DPL, APL, and WSA) $\mathbb{A}$ and any HSMA (resp.~HLQMA, HUQMA, HDPL, HAPL, and HWSA) $\mathbb{H}$:
\begin{center}
$ \mathbb{A} \cong (\mathbb{A}^+)_+ \quad \mbox{and}\quad \mathbb{H} \cong (\mathbb{H}_+)^+.$
\end{center}
These correspondences restrict appropriately to the relevant classes of perfect algebras and perfect heterogeneous algebras.
\end{proposition}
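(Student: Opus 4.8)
The plan is to show that $(-)^+$ and $(-)_+$ are mutually inverse up to isomorphism, treating the two displayed isomorphisms separately and then checking that neither argument is sensitive to the extra (in)equalities cutting out the subvarieties, nor to the perfectness clauses.

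For $\mathbb{A}\cong(\mathbb{A}^+)_+$, I would first note that $\mathbb{A}^+=(\mathbb{L},\mathbb{K},e,h)$, so $(\mathbb{A}^+)_+$ has the same lattice reduct $\mathbb{L}$ and bounds $\top,\bot$ as $\mathbb{A}$, while its negation is the operation $a\mapsto e(h(a)^*)$ reconstructed from the kernel maps. It therefore suffices to check that this reconstructed negation coincides with the original $a'$, so that $\mathrm{id}_L$ is the desired isomorphism. Using (K6), the identity $eh(a)=a''$, and (S5),
\[
e(h(a)^*)\overset{\mathrm{(K6)}}{=}e\bigl(h(e(h(a))')\bigr)=e(h(a'''))\overset{\mathrm{(S5)}}{=}e(h(a'))=a'''\overset{\mathrm{(S5)}}{=}a',
\]
so $(\mathbb{A}^+)_+=\mathbb{A}$ on the nose.

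For $\mathbb{H}\cong(\mathbb{H}_+)^+$, the substantive work is already contained in Proposition~\ref{prop:reverse engineering}. By that proposition $\mathbb{H}_+$ is an SMA (resp.~DPL), whence $(\mathbb{H}_+)^+=(\mathbb{L},\mathbb{K},e_k,h_k)$ is a well-defined HSMA (resp.~HDPL) by Proposition~\ref{prop:from single to multi}, with $\mathbb{K}$ the kernel of $\mathbb{H}_+$. An isomorphism of heterogeneous algebras is a pair $(\phi,\psi)$ with $\phi$ a lattice isomorphism of the $\mathbb{L}$-components and $\psi$ a De Morgan (resp.~Boolean) isomorphism of the $\mathbb{D}$-components such that $\phi\circ e=e_k\circ\psi$ and $\psi\circ h=h_k\circ\phi$. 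The second part of Proposition~\ref{prop:reverse engineering} already produces an isomorphism $f\colon\mathbb{D}\to\mathbb{K}$ together with the identities $e=e_k\circ f$ and $h_k=f\circ h$; taking $\phi=\mathrm{id}_L$ and $\psi=f$, these two identities are exactly the required compatibility conditions, so $(\mathrm{id}_L,f)$ is the sought isomorphism.

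Finally, I would dispatch the subvarieties and the perfect case uniformly. Corollaries~\ref{cor:sub single to multi} and \ref{cor:sub multi to single} show that both constructions preserve membership in each subvariety, and the defining conditions (S6a), (S6b), (S7), (S8) and (H6a), (H6b), (H7), (H8) speak only of the lattice, the De Morgan/Boolean component, and the maps $e,h$, all of which are carried isomorphically by $\mathrm{id}_L$ and $f$; hence the two isomorphisms restrict to the smaller classes without further work. For the perfect case, neither construction alters the lattice reduct or---modulo $f$---the De Morgan/Boolean reduct, and both reuse the same $e,h$, so (PH1)--(PH3) transfer along $(\mathrm{id}_L,f)$, which is complete because $\mathrm{id}_L$ and $f$ are. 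The one genuinely new computation is the single-line verification that the reconstructed negation equals $'$; the remaining obstacle is merely to spell out the notion of heterogeneous-algebra morphism carefully enough that the identities from Proposition~\ref{prop:reverse engineering} are recognised as precisely its structure-preservation clauses, and to confirm that perfectness is stable under $(\mathrm{id}_L,f)$.
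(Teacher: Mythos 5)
Your proposal is correct and takes essentially the same route as the paper: the paper's own proof is a one-line appeal to Proposition \ref{prop:from single to multi}, Corollary \ref{cor:sub single to multi}, Proposition \ref{prop:reverse engineering} and Corollary \ref{cor:sub multi to single}, which is exactly the skeleton you follow, with the second isomorphism carried entirely by the map $f$ and the identities $e = e_k \circ f$, $h_k = f\circ h$ from Proposition \ref{prop:reverse engineering}. The only difference is that you spell out what the paper treats as immediate, namely the computation $e(h(a)^*) = a'''=a'$ showing $(\mathbb{A}^+)_+=\mathbb{A}$ on the nose, and the check that $(\mathrm{id}_L, f)$ satisfies the compatibility clauses of a heterogeneous-algebra isomorphism; both verifications are correct.
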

\begin{proof}
It immediately follows from Proposition \ref{prop:from single to multi}, Corollary \ref{cor:sub single to multi}, Proposition \ref{prop:reverse engineering} and Corollary \ref{cor:sub multi to single}.
\end{proof}

\subsection{Canonical extensions of heterogeneous algebras}
\label{Canonical extensions}
Canonicity in the multi-type environment is used both to provide complete semantics for a large class of axiomatic extensions of the basic logic (semi De Morgan logic in the present case), and to prove the conservativity  of its associated display calculus (cf.~Section \ref{ssec: conservativity}). In the present section, we define the canonical extension $\mathbb{H}^\delta$ of any heterogeneous algebra $\mathbb{H}$ introduced  in Section \ref{Heterogeneous presentation}  by instantiating the general definition discussed in \cite{linearlogPdisplayed}. This makes it possible to define the canonical extension of any SMA $\mathbb{A}$ as a perfect SMA $(\mathbb{A}^+)^\delta_+$.
 We then show that this definition coincides with the definition given in \cite[Section 4]{Palma2005}. In what follows, we let $\mathbb{L}^\delta$ and $\mathbb{A}^\delta$ denote the canonical extensions of the distributive lattice $\mathbb{L} $ and of the De Morgan algebra (resp.~Boolean algebra) $\mathbb{D}$  respectively, and $e^\pi$ and $h^\delta$ denote the $\pi$-extensions of $e$ and $h$\footnote{The order-theoretic properties of $h$ guarantee that the $\sigma$-extension and the $\pi$-extension of $h$ coincide. This is why we use $h^\delta$ to denote the resulting extension.}, respectively. We refer to \cite{gehrke2001bounded} for the relevant definitions.

\begin{proposition}
\label{prop:canonical extensions}
If $(\mathbb{L}, \mathbb{D}, e, h)$ is an HSMA (resp.~HDPL, HLQMA, HUQMA, HAPL, and HWSA),
then $(\mathbb{L}^\delta, \mathbb{D}^\delta, e^\pi, h^\delta)$ is a perfect HSMA (resp.~perfect HDPL, perfect HLQMA, perfect HUQMA, perfect HAPL and perfect HWSA).
\end{proposition}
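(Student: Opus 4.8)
The plan is to instantiate the general theory of canonical extensions of (normal) heterogeneous algebras developed in \cite{linearlogPdisplayed} (building on \cite{gehrke2001bounded}), and to verify that each defining condition of the relevant class of heterogeneous algebras (cf.~Definition \ref{def:heterogeneous algebras}) is preserved under canonical extension. The conditions split into two groups: those concerning the base algebras $\mathbb{L}$ and $\mathbb{D}$, namely (H1) and (H2a)/(H2b); and those concerning the maps $e,h$ and their interaction, namely (H3)--(H5), together with (H6a), (H6b), (H7), (H8) for the subvarieties. For the first group I would appeal to the classical canonicity results: the canonical extension $\mathbb{L}^\delta$ of a bounded distributive lattice is a perfect distributive lattice, and the canonical extension $\mathbb{D}^\delta$ of a De Morgan algebra (resp.~Boolean algebra) is a perfect De Morgan algebra (resp.~perfect Boolean algebra). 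This immediately yields (H1), (H2a)/(H2b), and condition (PH1) of perfectness.

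For the maps, the key facts are that $h$ is a lattice homomorphism and that $e$ is an order-embedding preserving finite meets. Hence $h^\delta$ (for which the $\sigma$- and $\pi$-extensions coincide) is a complete lattice homomorphism, giving the homomorphism part of (H4) and condition (PH3); and $e^\pi$, being the $\pi$-extension of a meet-preserving map, is completely meet-preserving, giving the completely-meet-preserving part of (PH2) together with $e^\pi(\alpha \cap \beta) = e^\pi(\alpha) \wedge e^\pi(\beta)$ in (H3). Since $e^\pi$ and $h^\delta$ restrict to $e$ and $h$ on the original algebras (a standard property of canonical extensions of maps), $e^\pi(1) = e(1) = \top$ and $e^\pi(0) = e(0) = \bot$, completing (H3). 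The crucial identity is (H5), $h^\delta e^\pi(\alpha) = \alpha$: I would obtain it from the composition lemma for canonical extensions, which gives $h^\delta \circ e^\pi = (h \circ e)^\pi$ because $h$ is a homomorphism (so that composing with the complete homomorphism $h^\delta$ commutes with the meets and joins defining the $\pi$-extension); since $h \circ e = \mathrm{id}_{\mathbb{D}}$ by (H5) for $\mathbb{H}$, its $\pi$-extension is $\mathrm{id}_{\mathbb{D}^\delta}$. Condition (H5) in turn yields that $e^\pi$ is an order-embedding (if $e^\pi(\alpha) \leq e^\pi(\beta)$, apply the monotone $h^\delta$ to get $\alpha \leq \beta$) and that $h^\delta$ is surjective (it has the right inverse $e^\pi$), completing (H4) and the remaining parts of (H3) and (PH2).

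For the subvarieties I would verify that the extra conditions are canonical. For (H6a)/(H6b), since $\mathrm{id} \leq e \circ h$ (resp.~$e \circ h \leq \mathrm{id}$) holds pointwise on $\mathbb{L}$ and the $\pi$-extension is monotone in its argument map, it follows that $\mathrm{id} = \mathrm{id}^\pi \leq (e\circ h)^\pi = e^\pi \circ h^\delta$ (resp.~$(e \circ h)^\pi \leq \mathrm{id}$), where the composition lemma, now applied to the \emph{inner} homomorphism $h$ (whose extension preserves closed and open elements), again identifies $(e \circ h)^\pi$ with $e^\pi \circ h^\delta$. For (H7) and (H8), which additionally involve the De Morgan negation ${}^*$, I would argue that these are inductive (Sahlqvist-type) inequalities in the heterogeneous signature and hence canonical, using that ${}^*$ is a dual isomorphism (so smooth), that $h^\delta$ is a complete homomorphism, and that $e^\pi$ is completely meet-preserving, in order to decompose the relevant composite terms as composites of the extended maps before transferring the inequality from $\mathbb{H}$ to $\mathbb{H}^\delta$.

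The main obstacle I anticipate is precisely the correct handling of the conditions involving ${}^*$, i.e.~(H7) and (H8): because De Morgan negation is order-reversing, the corresponding terms are non-monotone, so one cannot simply invoke monotonicity of the extensions and must instead track the interplay of $\sigma$- and $\pi$-extensions and confirm that the syntactic shape of each inequality guarantees that its validity is preserved. Throughout, the technical linchpin is the composition lemma identifying the extension of a composite term with the composite of the extended maps, which is available here exactly because $h$ is a homomorphism and $e$ is meet-preserving.
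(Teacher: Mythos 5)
Your proposal is correct and takes essentially the same route as the paper: classical canonicity of the base algebras, the extension theory of finitely meet-preserving maps and homomorphisms for (H3)--(H5), (PH2)--(PH3) (the paper obtains your composition-lemma step for (H5) by citing the Gehrke--J\'onsson result that $Id_{\mathbb{D}} = h \circ e$ is canonical), and canonicity of the subvariety axioms. The only cosmetic difference is that the paper disposes of (H6a), (H6b), (H7) and (H8) uniformly by noting they are analytic inductive (hence canonical), whereas you treat (H6a)/(H6b) by a direct monotonicity-plus-composition argument and reserve the Sahlqvist-type appeal for (H7)/(H8); both are sound.
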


\[
\begin{tikzpicture}[node/.style={circle, draw, fill=black}, scale=1]
\node (DL) at (-2.1,-2.0) {$\mathbb{L}$};
\node (DM) at (2.1, -2.0) {$\mathbb{D}$};
\node (a) at (3.1,-2.1) {$^\ast$};
\node (a*) at (3.2,1.95) {$^{\ast^\delta}$};
\draw [->]  (DM)  to [out= 30,in= -30, looseness = 7](DM);
\draw [->>] (DL)  to node[above] {\small $h$}  (DM);
\draw [right hook->] (DM) to [out=135, in = 45, looseness=1]   node[above] {\small $e$}  (DL);

\node (DL delta) at (-2.1,2.0) {$\mathbb{L}^{\delta}$};
\node (DM delta) at (2.1,2.0) {$\mathbb{D}^{\delta}$};
\node (adje) at (0, 3.6) {\rotatebox[origin=c]{-270}{$\vdash$}};
\draw [right hook ->] (DL) to  (DL delta);
\draw [right hook ->] (DM)  to (DM delta);

\draw [->]  (DM delta)  to [out= 30, in = -30,looseness = 5.5] (DM delta);

\draw [->>] (DL delta)  to [out=75,in=105,looseness= 1.3] node[above] {\small $e_\ell$}  (DM delta);

\draw [left hook->] (DM delta) to [out=135, in= 45, looseness= 1.1] (DL delta);

\node (epi) at (0, 3.3) {\small $e^{\pi}$};

\draw [->>] (DL delta)  to node[below] {\small$h^{\delta}$} (DM delta);
\node (adje) at (0, 1.4) {\rotatebox[origin=c]{-270}{$\vdash$}};
\node (adje) at (0, 2.2) {\rotatebox[origin=c]{-270}{$\vdash$}};

\draw [left hook->] (DM delta)  to [out=-150, in = -30, looseness= 1.2] node[below] {\small $h_r$}  (DL delta);

\draw [right hook->] (DM delta)  to [out= 150, in= 30, looseness= 1.2] node[below] {\small $h_\ell$}(DL delta);

\end{tikzpicture}
\]

\begin{proof}
Firstly, $\mathbb{L}^\delta$ and $\mathbb{D}^\delta$ are a perfect distributive lattice and a perfect De Morgan algebra (resp.~perfect Boolean algebra) respectively. Secondly, since $h$ is a surjective homomorphism, $h$ is both finitely meet-preserving and finitely join-preserving.  Hence, as is well known, $h^\delta$ is surjective, and completely meet- (join-) preserving \cite[Theorem 3.7]{gehrke2004bounded}. Since $h$ is also smooth, this shows that $h^\delta = h^\pi = h^\sigma$ is a complete homomorphism.  Thirdly, since $e$ is finitely meet-preserving, $e^\pi$ is completely meet-preserving, and it immediately follows from the definition of $\pi$-extension that $e^\pi$ is an order-embedding \cite[Corollary 2.25]{gehrke2004bounded}. The identity  $e^\pi(0) = 0$ clearly holds, since $\mathbb{A}$ is a subalgebra of $\mathbb{A^\delta}$. Moreover, $Id_\mathbb{D} = h\circ e$ is canonical by \cite[Proposition 14]{gehrke2000monotone}. This is enough to show that if $(\mathbb{L}, \mathbb{D}, e, h)$ is a SMA (resp.~DPL), then $(\mathbb{L}^\delta, \mathbb{D}^\delta, e^\pi, h^\delta)$ is a perfect HSMA (resp.~perfect HDPL).

Since (H6a), (H6b), (H7) and (H8) are analytic inductive (cf.~Definition \ref{def:analytic inductive ineq}),  they are canonical. So their corresponding heterogeneous  algebras are perfect.
\end{proof}
In the environment of perfect heterogeneous algebras, completely join (resp.~meet) preserving maps have right (resp.~left) adjoints. These adjoints guarantee the soundness of all display rules  in the display calculi introduced in the next section.

In \cite{Palma2005}, C.~Palma studied the canonical extensions of semi De Morgan algebras using insights from the Sahlqvist theory of Distributive Modal Logic. She recognized that not all inequalities in the axiomatization of SMA are Sahlqvist, and circumvented this problem by introducing the following term-equivalent presentation of SMAs. 
\begin{definition}[\cite{Palma2005}, Definition 4.1.2]
For any SMA $\mathbb{A} = (L, \wedge, \vee, ', \top, \bot)$, let $\mathbb{S}_{\mathbb{A}} = (L, \wedge, \vee, \vartriangleright, \wbox, \top, \bot)$ be such that $\wbox$ and $ \vartriangleright$ are unary operations respectively defined by the assignments $a\mapsto a''$  and $a\mapsto a'$.
\end{definition}
Palma showed that the algebras corresponding to SMAs via the construction above are exactly those $\{\wbox, \vartriangleright\}$-reducts of Distributive Modal Algebras satisfying the following additional axioms: 
\begin{enumerate}
\item $\vartriangleright \top \leq \bot$;
\item $\wbox a \leq\, \vartriangleright \vartriangleright a$;
\item $\vartriangleright \vartriangleright a \leq \wbox a$;
\item ${{\wbox}{\vartriangleright}}\, a \leq\, \vartriangleright a$;
\item $\vartriangleright a \leq {{\wbox}{\vartriangleright}}\, a$.
\end{enumerate}
The axioms above can be straightforwardly verified to be Sahlqvist and hence canonical. This enables Palma to define the canonical extension $\mathbb{A}^\delta$ of $\mathbb{A}$ as the $\{\vartriangleright\}$-reduct of $\mathbb{S}^\sigma_{\mathbb{A}} = (\mathbb{L}^\sigma, \vartriangleright^\pi, \wbox^\pi)$. The following lemma immediately implies that $\mathbb{A}^\delta$ coincides with  $ (\mathbb{A}^{+\delta})_+$.

\begin{lemma}
For any SMA $\mathbb{A}$,  letting $\mathbb{S}_\mathbb{A}$ be defined as above, 
\begin{enumerate}
\item $\wbox^\pi = e^\pi \circ h^\delta$;
\item $\vartriangleright^\pi = e^\pi \circ *^\delta \circ h^\delta$.
\end{enumerate}
\end{lemma}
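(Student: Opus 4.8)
The plan is to reduce each of the two identities to a purely single-type factorisation of the operations of $\mathbb{S}_\mathbb{A}$, and then to transport that factorisation through $\pi$-extension using composition lemmas for canonical extensions of maps. First I would record the factorisations on $\mathbb{L}$ itself. By definition $\wbox a = a''$ and $\vartriangleright a = a'$ for every $a \in L$. The identity $eh(a) = a''$ from the beginning of Section \ref{ssec:kernel} gives $\wbox = e \circ h$. For $\vartriangleright$, the negation of $\mathbb{A}$ (read through the identification $\mathbb{A} \cong (\mathbb{A}^+)_+$ of Proposition \ref{prop:Aplus plus}) satisfies $a' = e(h(a)^*)$, so $\vartriangleright = e \circ * \circ h$; equivalently, $e(h(a)^*) = a'$ follows directly from K6, $eh(a) = a''$ and S5. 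Thus, before any extension, both operations factor through the kernel exactly as the right-hand sides of the lemma.

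Next I would fix which extensions are in play and assemble the relevant closed/open bookkeeping. Since $h$ is a surjective lattice homomorphism it is smooth, $h^\sigma = h^\pi = h^\delta$ is a complete homomorphism (as already used in the proof of Proposition \ref{prop:canonical extensions}), and being induced by a surjection it maps closed elements of $\mathbb{L}^\delta$ to closed elements of $\mathbb{D}^\delta$ and open to open. Since $*$ is the De Morgan negation of $\mathbb{D}$ it is a dual lattice automorphism, hence smooth, so $*^\sigma = *^\pi = *^\delta$ is a complete dual automorphism of $\mathbb{D}^\delta$ that interchanges closed and open elements. By contrast $e$ is only finitely meet-preserving, so only $e^\pi$ is completely meet-preserving; this is precisely why the $\pi$-extension is the correct one to take on the outside, and it explains the occurrence of $e^\pi$ together with $h^\delta$ and $*^\delta$ in the statement.

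The crux is then to show that $\pi$-extension distributes over these two composites, i.e.\ that $(e \circ h)^\pi = e^\pi \circ h^\delta$ and $(e \circ * \circ h)^\pi = e^\pi \circ *^\delta \circ h^\delta$. Here I would invoke the composition theorems of Gehrke and J\'onsson (\cite{gehrke2004bounded,gehrke2000monotone}). The structural input is exactly the bookkeeping of the previous paragraph: because the innermost factor $h^\delta$ preserves the closed/open structure on which the defining meets-of-joins of the $\pi$-extension are computed, the nested formula for $(g \circ h)^\pi$ collapses to $g^\pi \circ h^\delta$ for any monotone $g$. Taking $g = e$ yields the first identity, and taking $g = e \circ *$ reduces the second to $(e \circ *)^\pi = e^\pi \circ *^\delta$, which the same kind of lemma delivers for the meet-preserving $e$ composed with the smooth, order-reversing $*$.

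I expect the composition step, rather than the elementary algebra of the first paragraph, to be the main obstacle, since canonical extension does not commute with composition in general and equality holds only under hypotheses controlling how each inner map acts on closed and open elements. The delicate case is the order-reversing middle factor $*$: one must track how order-reversal swaps the roles of the $\sigma$- and $\pi$-extensions and confirm that the smoothness of the De Morgan involution is exactly what licenses writing $*^\delta$ in place of $*^\pi$ inside the composite. Verifying these hypotheses is where the real work of the proof lies.
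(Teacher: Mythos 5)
Your proposal matches the paper's proof in both structure and substance: you factor the operations as $\wbox = e \circ h$ and $\vartriangleright = e \circ * \circ h$ via the kernel maps, then distribute the $\pi$-extension over these composites using the Gehrke--J\'onsson composition results from \cite{gehrke2004bounded}, and finally use smoothness of $h$ and $*$ to replace $h^\pi$ and $*^\pi$ by $h^\delta$ and $*^\delta$ --- exactly the three steps of the paper's argument. Your additional bookkeeping about closed/open elements and why the composition lemmas apply is a more explicit account of what the paper delegates to the citation, but it is the same route.
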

\begin{proof}
By the definitions of $\wbox, \vartriangleright,$ $e$ and $h$ (cf.~beginning of Section 3.1), 
\begin{center}
\begin{tabular}{rll}
$\wbox^\pi$ =& $('')^\pi$ & definition of $\wbox$ \\
=& $(e \circ h)^\pi$ & definitions of $e$ and $h$\\
=&  $e^\pi \circ h^\pi$&  \cite[Lemma 3.3, Corollary 2.25]{gehrke2004bounded}\\
=&$e^\pi \circ h^\delta$& $h$ is smooth\\
\end{tabular}
\end{center}

\begin{center}
\begin{tabular}{rll}
$\vartriangleright^\pi$ =& $(')^\pi$ & definition of $\vartriangleright$ \\
=& $(e \circ * \circ h)^\pi$ & definition of $'$\\
=&  $e^\pi \circ *^\pi \circ h^\pi$&  \cite[Lemma 3.3, Corollary 2.25]{gehrke2004bounded}\\
=&$e^\pi \circ *^\delta \circ h^\delta$& $*$ and $h$ are smooth\\
\end{tabular}
\end{center}
\end{proof}

\section{Multi-type  presentation of semi De Morgan logic and its extensions }\label{sec: multi-type language}
In Section \ref{Heterogeneous presentation} we showed that heterogeneous semi De Morgan algebras  are  equivalent presentations of  semi De Morgan algebras. This provides a semantic motivation for introducing the multi-type language $\mathcal{L}_{\textrm{MT}}$, which is naturally interpreted on heterogeneous semi De Morgan algebras. The language $\mathcal{L}_{\textrm{MT}}$ consists of terms of types $\mathsf{DL}$ and $\mathsf{K}$, defined as follows:
\begin{center}
\begin{tabular}{rll}
$\mathsf{DL}\ni  A$& ::=&$  \,p \mid \, \wbox\alpha \mid \xtop \mid \xbot \mid A \xand A \mid A \xor A $\\
$\mathsf{K}\ni \alpha$& ::=&$ \, \circ A \mid \dtop \mid \dbot \mid \rdneg\alpha \mid \alpha \dor \alpha \mid  \alpha \dand \alpha  $
\end{tabular}
\end{center}
The interpretation of $\mathcal{L}_{\textrm{MT}}$-terms  into heterogeneous  algebras  is defined as the easy generalization of the interpretation of propositional languages in universal algebra; namely, the heterogeneous operation $e$ interprets the connective $\wbox$, the heterogeneous operation $h$ interprets the connective $\circ$, and $\mathsf{DL}$-terms (resp.~$\mathsf{K}$-terms) are interpreted in the first (resp.~second) component  of heterogeneous  algebras.  

The toggle between single-type algebras and their heterogeneous counterparts (cf.\ Sections \ref{Heterogeneous presentation}) is reflected syntactically by the  translations $(\cdot)^{\tau}: \mathcal{L}\to \mathcal{L}_{\textrm{MT}}$
 defined as follows:
\begin{center}
\begin{tabular}{rcl}
$p^{\tau}$   & $::=$ & $p$\\
$\top^{\tau}$ & $::=$ & $\xtop$ \\
$\bot^{\tau}$ & $::=$ & $\xbot$ \\
$(A \wedge B)^{\tau}$ & $::= $ & $A^{\tau} \xand B^{\tau}$\\
$(A \vee B)^{\tau}$ & $::=$ & $ A^{\tau} \xor B^{\tau}$ \\
$({\neg} A)^{\tau}$ & $::=$ & ${\wbox \dneg\bbox} A^{\tau}$ \\
\end{tabular}
\end{center}

 Recall that $\mathbb{A}^+$ denotes the heterogeneous algebra associated with the single-type algebra $\mathbb{A}$ (cf.~Definition \ref{def: aplus}). The following proposition is proved by a routine induction on  $\mathcal{L}$-formulas.
\begin{proposition}
\label{prop:consequence preserved and reflected}
For all $\mathcal{L}$-formulas $A$ and $B$ and any $\mathbb{A} \in \{$SMA, LQMA, UQMA, DPL, APL, WSA$\}$,
$$\mathbb{A}\models A\fCenter B \quad \mbox{ iff }\quad \mathbb{A}^+ \models A^\tau \fCenter B^\tau.$$
\end{proposition}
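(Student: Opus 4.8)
The plan is to prove the biconditional by a routine induction on the structure of \(\mathcal{L}\)-formulas, after first establishing the key semantic lemma that the interpretation of any translated formula \(A^\tau\) in \(\mathbb{A}^+\) coincides with the interpretation of \(A\) in \(\mathbb{A}\) itself. Concretely, fix an assignment of the propositional variables and let \(\llbracket\cdot\rrbracket_{\mathbb{A}}\) and \(\llbracket\cdot\rrbracket_{\mathbb{A}^+}\) denote the interpretation maps into \(\mathbb{A}\) and into the heterogeneous algebra \(\mathbb{A}^+ = (\mathbb{L}, \mathbb{K}, e, h)\), respectively. I would prove by induction that \(\llbracket A^\tau\rrbracket_{\mathbb{A}^+} = \llbracket A\rrbracket_{\mathbb{A}}\) for every \(\mathcal{L}\)-formula \(A\), where the right-hand side lives in \(\mathbb{L}\), which is literally the \(\mathsf{DL}\)-component of \(\mathbb{A}^+\). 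Once this pointwise identity is in hand, the biconditional follows immediately: \(\mathbb{A}\models A\fCenter B\) means \(\llbracket A\rrbracket_{\mathbb{A}} \leq \llbracket B\rrbracket_{\mathbb{A}}\) under every assignment, and by the identity this holds if and only if \(\llbracket A^\tau\rrbracket_{\mathbb{A}^+} \leq \llbracket B^\tau\rrbracket_{\mathbb{A}^+}\) under every assignment, i.e.\ \(\mathbb{A}^+ \models A^\tau\fCenter B^\tau\).

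The induction itself splits into the base and inductive cases dictated by the grammar of \(\mathcal{L}\). The base cases for \(p\), \(\top\), and \(\bot\) are immediate from the definition of \((\cdot)^\tau\) together with the fact that the constants \(\xtop, \xbot\) are interpreted by \(\top, \bot\) of \(\mathbb{L}\). The cases for \(\wedge\) and \(\vee\) follow directly from the inductive hypothesis, since \(\xand\) and \(\xor\) are interpreted by the lattice operations of \(\mathbb{L}\), which are exactly the operations of the lattice reduct of \(\mathbb{A}\). The only case that requires genuine computation is negation: here I must verify that \(\llbracket(\neg A)^\tau\rrbracket_{\mathbb{A}^+} = \llbracket\neg A\rrbracket_{\mathbb{A}}\), i.e.\ that \(e(h(\llbracket A^\tau\rrbracket_{\mathbb{A}^+})^*) = \llbracket A\rrbracket_{\mathbb{A}}'\). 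Using the inductive hypothesis this reduces to checking \(e(h(a)^*) = a'\) for \(a = \llbracket A\rrbracket_{\mathbb{A}}\), which unwinds via the kernel definitions: by \(\mathrm{K6}\) we have \(e(h(a)^*) = e(h(e(h(a))') )\), and since \(eh(a) = a''\), Lemma \ref{eq:retractions} together with (S5) yields \(e(h(a)^*) = eh(a''') = eh(a') = a'''' = a''' = a'\).

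The main obstacle, and the only place where any real work resides, is precisely this negation step and making sure the translation \((\neg A)^\tau = \wbox\dneg\bbox A^\tau\) is semantically matched by the composite \(e \circ {}^* \circ h\). I would be careful here because \(\wbox\) is interpreted by \(e\), \(\bbox\) (written \(\circ\) in the grammar) is interpreted by \(h\), and \(\dneg\) is interpreted by \({}^*\) on \(\mathbb{K}\); reading the composition in the correct order is what delivers \(a\mapsto e(h(a)^*)\), which is exactly the operation \('\) reconstructed on \(\mathbb{H}_+\). The whole argument is uniform across all six classes \(\{\)SMA, LQMA, UQMA, DPL, APL, WSA\(\}\), since Proposition \ref{prop:from single to multi} and Corollary \ref{cor:sub single to multi} guarantee that \(\mathbb{A}^+\) lies in the corresponding heterogeneous class, and the interpretation identity depends only on the common SMA-structure of the kernel maps \(e, h\) and the operation \({}^*\); no additional case analysis per subvariety is needed.
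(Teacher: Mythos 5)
Your proposal is correct and takes essentially the same route as the paper, which proves this proposition only by declaring ``a routine induction on $\mathcal{L}$-formulas''; your interpretation-preservation lemma, with the negation case reducing to $e(h(a)^*) = a'$ via (K6), $eh(a)=a''$ and (S5), is exactly the content that routine induction must have. (One harmless slip: in your final chain $eh(a') = a''''$ should read $eh(a') = a'''$, which by (S5) equals $a'$, as you conclude.)
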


We are now in a position to translate the characteristic axioms of every logic mentioned in Section \ref{ssec:Hilbert system} into $\mathcal{L}_{\textrm{MT}}$. Together with Proposition \ref{prop:Aplus plus}, the proposition above guarantees that the translation of each of the axioms below is valid on the corresponding class of heterogeneous algebras.

\begin{center}
\begin{tabular}{r l l}
$ \neg\neg A \xand  \neg\neg B \fCenter \neg\neg(A\xand B) \rightsquigarrow $ &
  ${{\wbox}{\dneg}{\circ}{\wbox}{\rdneg}{\circ}} (A \xand B)^\tau \fCenter{{\wbox}{\dneg}{\circ}{\wbox}{\rdneg}{\circ}} A^\tau \xand {{\wbox}{\dneg}{\circ}{\wbox}{\rdneg}{\circ}} B^\tau $ &$(i)$
  \\

\\
$\neg A \vdash \neg\neg\neg A \rightsquigarrow$ &${{\wbox}{\dneg}{\circ}}A^\tau \fCenter{{\wbox}{\dneg}{\circ}{\wbox}{\dneg}{\circ}{\wbox}{\dneg}{\circ}} A^\tau$ &$ (ii)$\\
\\
$\neg\neg\neg A \vdash \neg A \rightsquigarrow$ &${{\wbox}{\dneg}{\circ}{\wbox}{\dneg}{\circ}{\wbox}{\dneg}{\circ}}A^\tau \fCenter {{\wbox}{\dneg}{\circ}}A^\tau$ &$ (iii)$\\
  \\
 $ \neg A \xand  \neg B \fCenter \neg(A\xor B) \ \rightsquigarrow $ &
 ${{\wbox}{\dneg}{\circ}}(A \xor B)^\tau \fCenter {{\wbox}{\dneg}{\circ}}A^\tau \xand {{\wbox}{\dneg}{\circ}} B^\tau $ &$(iv)$\\
 
 \\
$\top \fCenter \neg\bot\rightsquigarrow $&
$   \xtop \fCenter {{\wbox}{\dneg}{\circ}}\xbot $  &$(v)$\\
\\
$
\neg\top \fCenter \bot\ \rightsquigarrow $&
$ {{\wbox}{\dneg}{\circ}}\xtop \fCenter \xbot$  &$(vi)$\\
  \\
$ A \fCenter  \neg\neg A \rightsquigarrow $ & $A^\tau \fCenter {{\wbox}{\dneg}{\circ}{\wbox}{\dneg}{\circ}}A^\tau$   &$(vii)$\\
\\
  $\neg\neg A \fCenter A \rightsquigarrow $ & ${{\wbox}{\dneg}{\circ}{\wbox}{\dneg}{\circ}}A^\tau \fCenter  A^\tau$ & $(viii)$\\
 \\ 
$\neg A \wedge \neg\neg A \fCenter \bot \rightsquigarrow$&$ {{\wbox}{\dneg}{\circ}}A^\tau \wedge  {{\wbox}{\dneg}{\circ}{\wbox}{\dneg}{\circ}}A^\tau \fCenter \xbot $ &$(ix)$\\
  
  \\
$ A \wedge \neg A \fCenter \bot \rightsquigarrow$&$A^\tau \wedge  {{\wbox}{\dneg}{\circ}} A^\tau  \fCenter \xbot $& $(x)$\\
\\
$\top \fCenter \neg A \vee \neg\neg A \rightsquigarrow$& $\xtop \fCenter {{\wbox}{\dneg}{\circ}}A^\tau \vee {{\wbox}{\dneg}{\circ}{\wbox}{\dneg}{\circ}} A^\tau$  &$(xi)$
   \end{tabular}
 \end{center}

  Notice that the defining identities of heterogeneous  algebras (cf.~Definition  \ref{def:heterogeneous algebras}) can be expressed as {\em analytic inductive} $\mathcal{L}_{\textrm{MT}}$-inequalities (cf.~Definition \ref{def:analytic inductive ineq}). Hence, these inequalities can be used to generate the analytic rules of the calculus introduced in Section \ref{sec:proper display}, with a methodology analogous to the one introduced in \cite{greco2016unified}. As we will discuss in Section \ref{ssec: completeness}, the inequalities $(i)$-$(xi)$ are derivable in the calculus obtained in this way.

\section{Proper Display Calculi for semi De Morgan logic and its extensions}\label{sec:proper display}

In the present section, we introduce  proper multi-type display calculi for semi De Morgan logic and its extensions. The language manipulated by these calculi has types $\mathsf{DL}$ and $\mathsf{K}$, and  is built up from structural and operational (aka logical) connectives. In the tables of Section \ref{ssec:language of DSM},  each structural connective corresponding to a logical connective which belongs to the  family  $\mathcal{F}$ (resp.~$\mathcal{G}$, $\mathcal{H}$) defined in Section \ref{sec:analytic inductive ineq} is denoted by decorating that logical connective with $\hat{\phantom{a}}$ (resp.~$\check{\phantom{a}}$, $\tilde{\phantom{a}}$).\footnote{\label{footnote:def precedent succedent pos}For any  sequent $x\vdash y$, we define the signed generation trees $+x$ and $-y$ by labelling the root of the generation tree of $x$ (resp.\ $y$) with the sign $+$ (resp.\ $-$), and then propagating the sign to all nodes according to the polarity of the coordinate of the connective assigned to each node. Positive (resp.\ negative) coordinates propagate the same (resp.\ opposite) sign to the corresponding child node.  Then, a substructure $z$ in $x\vdash y$ is in {\em precedent} (resp.\ {\em succedent}) {\em position} if the sign of its root node as a subtree of $+x$ or $-y$ is  $+$ (resp.\ $-$).}

\subsection{Language}
\label{ssec:language of DSM}

\paragraph{Structural and operational terms.} 
\begin{center}
\begin{tabular}{l}
$\mathsf{DL}\left\{\begin{array}{l}
A ::= \,p \mid \xtop \mid \xbot \mid \wbox \alpha  \mid A \xand A \mid A \xor A\\ 
 \\
 X ::= A \mid \XTOP \mid \XBOT \mid \WBOX\Gamma \mid \LH\Gamma \mid \RH\Gamma \mid  X \XAND X  \mid X \XOR X \mid X \XCRARR X \mid X \XRARR X \\
\end{array} \right.$
\\
\\

$\phantom{D}\mathsf{K}\left\{\begin{array}{l}
\alpha ::= \,  \dtop  \mid \dbot \mid \wdia A \mid  {\rdneg\alpha} \mid \alpha \dand \alpha \mid \alpha \dor \alpha\\ 
 \\
\Gamma ::= \alpha \mid \DTOP  \mid \DBOT  \mid \WCIR X \mid \BDIA X \mid \DNEG \Gamma \mid \Gamma \DAND \Gamma  \mid \Gamma \DOR \Gamma \mid \Gamma \DCRARR \Gamma  \mid \Gamma \DRARR \Gamma \\
\end{array} \right.$
\end{tabular}
\end{center}

Interpretation of pure-type structural connectives as their logical counterparts\footnote{In the synoptic table, the operational symbols which occur only at the structural level will appear between round brackets.}:\\

{\fns
\begin{center}
\begin{tabular}{|c|c|c|c|c|c|c|c|c|c|c|c|c|c|}
\cline{1-14}
 \mc{6}{|c|}{$ \phantom{\Big(}\mathsf{DL}\phantom{\Big )}$}& \mc{8}{c|}{ $\mathsf{K}$}                                                                                              \\
\cline{1-14}
$\phantom{\Big(}\XTOP\phantom{\Big )} $& $\XAND$ & \mc{1}{c|}{$\XCRARR$} & \mc{1}{c|}{$\XBOT$} & $\XOR$ & \mc{1}{c|}{$\XRARR$}& \mc{1}{c|}{$\DTOP$} & $\DAND$ & \mc{1}{c|}{$\DCRARR$} & \mc{1}{c|}{$\DBOT$} & $\DOR$ & \mc{1}{c|}{$\DRARR$} & \mc{2}{c|}{$\DNEG$} \\
\cline{1-14}
$\phantom{\Big )}\,\xtop\,\phantom{\Big )}$    & $\,\xand\,$   & \mc{1}{c|}{$(\xcrarr)$}       & \mc{1}{c|}{$\,\xbot\,$}    & $\,\xor\,$    & \mc{1}{c|}{$(\xrarr)$}& $\,\dtop\,$    & $\,\dand\,$   & \mc{1}{c|}{$(\dcrarr)$}       & \mc{1}{c|}{$\,\dbot\,$}    & $\,\dor\,$    & \mc{1}{c|}{$(\drarr)$}   &\mc{1}{c|}{$\,\dneg\,$}&\mc{1}{c|}{$\,\rdneg\,$}\\
\cline{1-14}
\end{tabular}
\end{center}
}
 Interpretation of heterogeneous  structural connectives as their logical counterparts:
{\fns
\begin{center}
\begin{tabular}{|c|c|clclc|c|}
\cline{1-6}
 \mc{2}{|c|}{$\phantom{\Big )}\mathsf{DL} \to \mathsf{K}\phantom{\Big )}$}& \mc{1}{c|}{$\mathsf{K} \to \mathsf{DL}$} & \mc{1}{c|}{$\mathsf{K} \to \mathsf{DL}$} & \mc{1}{c|}{$\mathsf{K} \to \mathsf{DL}$} & \mc{1}{c|}{$\mathsf{DL} \to \mathsf{K}$}                                                                                              \\
\cline{1-6}
 \mc{2}{|c|}{$\WCIR$} & \mc{1}{c|}{$\phantom{\Big )}\LH\phantom{\Big )}$}& \mc{1}{c|}{$\RH$} & \mc{1}{c|}{$\WBOX$}& \mc{1}{c|}{$\phantom{\Big(}\BDIA\phantom{\Big)}$} \\
\cline{1-6}
$\phantom{\Big)}\wdia\phantom{\Big )}$&$\bbox$ &\mc{1}{c|}{$(\lh)$}& \mc{1}{c|}{$(\rh)$}& \mc{1}{c|}{$\,\wbox\,$}& \mc{1}{c|}{$(\bdia)$}  \\
\cline{1-6}
\end{tabular}
\end{center}
}

Algebraic interpretation of heterogeneous  structural connectives as operations in perfect HSM-algebras (see Lemma \ref{prop:canonical extensions}).\\

{\fns
\begin{center}
\begin{tabular}{|c|c|clclc|c|}
\cline{1-6}
 \mc{2}{|c|}{$\phantom{\Big(} \mathsf{DL} \to \mathsf{K}$}& \mc{2}{c|}{$\mathsf{K} \to \mathsf{DL}$} & \mc{1}{c|}{$\mathsf{K} \to \mathsf{DL}$} & \mc{1}{c|}{$\mathsf{DL} \to \mathsf{K}$}                                                                                              \\
\cline{1-6}
\mc{2}{|c|}{$\phantom{\Big(}\ \,\WCIR\ \,$}&\mc{1}{c|}{$\ \,\LH\ \,$}& \mc{1}{c|}{$\,\RH\,$}& \mc{1}{c|}{$\,\WBOX\,$}& \mc{1}{c|}{$\,\BDIA\,$}  \\
\cline{1-6}
 \mc{2}{|c|}{$h$} &\mc{1}{c|}{$\ \,h_\ell\ \,$}& \mc{1}{c|}{$\phantom{\Big(}\,h_r\,$}& \mc{1}{c|}{$\,e\,$}& \mc{1}{c|}{$\,e_\ell\,$}  \\
\cline{1-6}
\end{tabular}
\end{center}
}

\subsection{Multi-type display calculi for semi De Morgan logic and its extensions}
\label{ssec:display calculi}
In what follows, structures of type $\mathsf{DL}$ are denoted by the variables $X, Y, Z$, and $W$; structures of type $\mathsf{A}$ are denoted by the variables $\Gamma, \Delta, \Theta$ and $\Pi$.

\begin{enumerate}
\item[-] The proper display calculus for semi De Morgan logic D.SM consists of the following rules:
\begin{itemize}
\item Identity and cut rules
\begin{center}
\begin{tabular}{rcl}

\AxiomC{\phantom{$X \fCenter A$}}
\LeftLabel{\scriptsize Id}
\UI $p \fCenter p$
\DisplayProof
 &
\AX $X \fCenter A$
\AX $A \fCenter Y$
\RightLabel{\scriptsize $\mathrm{Cut}_\mathsf{L}$}
\BI$X \fCenter Y$
\DisplayProof
&
\AX $\Gamma \fCenter \alpha$
\AX $\alpha \fCenter \Delta$
\RightLabel{\scriptsize $\mathrm{Cut}_\mathsf{D}$}
\BI$\Gamma \fCenter \Delta$
\DisplayProof
 \\
\end{tabular}
\end{center}

\item Pure $\mathsf{DL}$-type display rules
\begin{center}
\begin{tabular}{rcl}
\AX $X \XAND Y \fCenter Z$
\LeftLabel{\scriptsize $\mathrm{res}_\mathsf{L}$}
\doubleLine
\UI$Y \fCenter X \XRARR Z$
\DisplayProof
&
$\quad$
&
\AX $X \fCenter Y \XOR Z $
\RightLabel{\scriptsize $\mathrm{res}_\mathsf{L}$}
\doubleLine
\UI$Y \XCRARR X \fCenter Z$
\DisplayProof
\end{tabular}
\end{center}

\item Pure $\mathsf{K}$-type display rules

\begin{center}
\begin{tabular}{rcl}
\AX $\Gamma \DAND \Delta \fCenter \Theta$
\LeftLabel{\scriptsize $\mathrm{res}_\mathsf{D}$}
\doubleLine
\UI$\Delta\fCenter \Gamma \DRARR \Theta$
\DisplayProof
&
&
\AX $\Gamma\fCenter \Delta \DOR \Theta$
\RightLabel{\scriptsize $\mathrm{res}_\mathsf{D}$}
\doubleLine
\UI$\Delta \DCRARR \Gamma \fCenter \Theta$
\DisplayProof
\\
\\

\AX $\DNEG \Gamma \fCenter \Delta$
\LeftLabel{\scriptsize $\mathrm{adj}_*$}
\doubleLine
\UI$\DNEG \Delta \fCenter \Gamma$
\DisplayProof
&
&
\AX $\Gamma \fCenter \DNEG \Delta$
\RightLabel{\scriptsize $\mathrm{adj}_*$}
\doubleLine
\UI  $ \Delta \fCenter \DNEG\Gamma$
\DisplayProof
\end{tabular}
\end{center}

\item Multi-type display rules
\begin{center}
\begin{tabular}{rcl}
\AX $X  \fCenter \WBOX \Gamma$
\LeftLabel{\scriptsize $\mathrm{adj}_\mathsf{LD}$}
\doubleLine
\UI$ \BDIA  X \fCenter \Gamma$
\DisplayProof
&
\AX $\WCIR X \fCenter \Gamma $
\LL{\scriptsize $\mathrm{adj}_\mathsf{DL}$}
\doubleLine
\UI$X \fCenter \RH \Gamma$
\DisplayProof
&
\AX $\Gamma \fCenter \WCIR X$
\RL{\scriptsize  $\mathrm{adj}_\mathsf{DL}$}
\doubleLine
\UI$\LH \Gamma \fCenter X$
\DisplayProof
\end{tabular}
\end{center}

\item Pure $\mathsf{DL}$-type structural rules

\begin{center}
\begin{tabular}{rcl}
\AX $X \fCenter Y$
\LeftLabel{\scriptsize $\XTOP$}
\doubleLine
\UI $X \XAND \XTOP \fCenter Y$
\DisplayProof
 &
 &
\AX$X \fCenter Y$
\RightLabel{\scriptsize $\XBOT$}
\doubleLine
\UI$X \fCenter Y \XOR \XBOT$
\DisplayProof
\\
\\
\AX$X \XAND Y \fCenter Z$
\LeftLabel{\scriptsize $\mathrm{E}_\mathsf{L}$}
\UI$Y \XAND X \fCenter Z$
\DisplayProof
 &
 &
\AX$X \fCenter Y \XOR Z$
\RightLabel{\scriptsize $\mathrm{E}_\mathsf{L}$}
\UI $X \fCenter Z \XOR Y$
\DisplayProof
\\
\\
\AX $(X \XAND Y)  \XAND Z \fCenter W$
\LeftLabel{\scriptsize $\mathrm{A}_\mathsf{L}$}
\doubleLine
\UI$X \XAND (Y  \XAND Z) \fCenter Z$
\DisplayProof
 &&
\AX$X \fCenter (Y \XOR Z) \XOR W$
\RightLabel{\scriptsize $\mathrm{A}_\mathsf{L}$}
\doubleLine
\UI $X \fCenter Y \XOR (Z \XOR W)$
\DisplayProof
\\
\\
\AX$X \fCenter Y$
\LeftLabel{\scriptsize $\mathrm{W}_\mathsf{L}$}
\UI$X \XAND Z \fCenter Y$
\DisplayProof
 &&
\AX $X \fCenter Y$
\RightLabel{\scriptsize $\mathrm{W}_\mathsf{L}$}
\UI $X \fCenter Y \XOR Z$
\DisplayProof
\\
\\
\AX$X \XAND X \fCenter Y$
\LeftLabel{\scriptsize $\mathrm{C}_\mathsf{L}$}
\UI $X \fCenter Y$
\DisplayProof
 &&
\AX$X \fCenter Y \XOR Y$
\RightLabel{\scriptsize $\mathrm{C}_\mathsf{L}$}
\UI $X \fCenter Y $
\DisplayProof
\\

\end{tabular}
\end{center}

\item Pure $\mathsf{K}$-type structural rules
\begin{center}
\begin{tabular}{rcl}
\AX $\Gamma \fCenter \Delta$
\LeftLabel{\scriptsize \DTOP}
\doubleLine
\UI $\Gamma \DAND \DTOP \fCenter \Delta$
\DisplayProof
 &&
\AX$\Gamma \fCenter \Delta$
\RightLabel{\scriptsize \DBOT}
\doubleLine
\UI$\Gamma \fCenter \Delta \DOR \DBOT$
\DisplayProof
\\
\\
\AX$\Gamma \DAND \Delta \fCenter \Theta$
\LeftLabel{\scriptsize $\mathrm{E}_\mathsf{D}$}
\UI$\Delta \DAND \Gamma \fCenter \Theta$
\DisplayProof
 &&
\AX$\Gamma \fCenter \Delta \DOR \Theta$
\RightLabel{\scriptsize $\mathrm{E}_\mathsf{D}$}
\UI $\Gamma \fCenter \Theta \DOR \Delta$
\DisplayProof
\\
\\
\AX $(\Gamma \DAND \Delta)  \DAND \Theta \fCenter \Pi$
\LeftLabel{\scriptsize $\mathrm{A}_\mathsf{D}$}
\doubleLine
\UI$\Gamma \DAND (\Delta  \DAND \Theta) \fCenter \Theta$
\DisplayProof
 &&
\AX$\Gamma \fCenter (\Delta \DOR \Theta) \DOR \Pi$
\RightLabel{\scriptsize $\mathrm{A}_\mathsf{D}$}
\doubleLine
\UI $\Gamma \fCenter \Delta \DOR (\Theta \DOR \Pi)$
\DisplayProof
\\
\\
\AX$\Gamma \fCenter \Delta$
\LeftLabel{\scriptsize $\mathrm{W}_\mathsf{D}$}
\UI$\Gamma \DAND \Theta \fCenter \Delta$
\DisplayProof
 &&
\AX $\Gamma \fCenter \Delta$
\RightLabel{\scriptsize $\mathrm{W}_\mathsf{D}$}
\UI $\Gamma \fCenter \Delta \DOR \Theta$
\DisplayProof
\\
\\
\AX$\Gamma \DAND \Gamma \fCenter \Delta$
\LeftLabel{\scriptsize $\mathrm{C}_\mathsf{D}$}
\UI $\Gamma \fCenter \Delta$
\DisplayProof
 &&
\AX$\Gamma \fCenter \Delta \DOR \Delta$
\RightLabel{\scriptsize $\mathrm{C}_\mathsf{D}$}
\UI $\Gamma \fCenter \Delta $
\DisplayProof
\\
\\
 \mc{3}{c}{\AX$\Gamma \fCenter  \Delta$
\RightLabel{\scriptsize cont}
\doubleLine
\UI$\DNEG \Delta \fCenter \DNEG \Gamma$
\DisplayProof}
\end{tabular}
\end{center}
\item Multi-type structural rules
\begin{center}
\begin{tabular}{rcl}
\AX $X  \fCenter Y$
\LeftLabel{\scriptsize \WCIR}
\UI$\WCIR X \fCenter \WCIR Y$
\DisplayProof
&
&
\AX $\LH \Gamma  \fCenter \RH \Delta$
\RL{\scriptsize \BCIR}
\UI$\Gamma \fCenter \Delta$
\DisplayProof
\\
\\
\AX $\DTOP \fCenter \Gamma$
\LeftLabel{\scriptsize {\BDIA}\DTOP}
\UI$\BDIA\XTOP \fCenter \Gamma$
\DisplayProof
&
&
\AX $X  \fCenter \WBOX\DBOT$
\RL{\scriptsize {\WBOX}\DBOT}
\UI$X \fCenter \XBOT$
\DisplayProof
\\
\\
\mc{3}{c}{
\AX $\Gamma  \fCenter \WCIR\WBOX\Delta$
\RL{\scriptsize {\WCIR}\WBOX}
\doubleLine
\UI $\Gamma \fCenter \Delta$
\DisplayProof}
\end{tabular}
\end{center}

\item Pure $\mathsf{DL}$-type operational rules

\begin{center}
\begin{tabular}{rcl}
\AX$\XTOP \fCenter X$
\LeftLabel{\scriptsize \xtop}
\UI$\xtop \fCenter X$
\DisplayProof
 &&
\AxiomC{\phantom{$\XTOP \fCenter X$}}
\RightLabel{\scriptsize \xtop}
\UI $\XTOP \fCenter \xtop$
\DisplayProof
\\
\\
\AxiomC{\phantom{$X \fCenter \XBOT$}}
\LeftLabel{\scriptsize \xbot}
\UI $\xbot \fCenter \XBOT$
\DisplayProof
 &&
\AX$X \fCenter \XBOT$
\RightLabel{\scriptsize \xbot}
\UI$X \fCenter \xbot$
\DisplayProof
\\
\\
\AX$A \XAND B \fCenter X$
\LeftLabel{\scriptsize \xand}
\UI$A \xand B \fCenter X$
\DisplayProof
 &&
\AX$X \fCenter A$
\AX$Y \fCenter B$
\RightLabel{\scriptsize \xand}
\BI$X \XAND Y \fCenter A \xand B$
\DisplayProof
\\
\\
\AX$A \fCenter X$
\AX$B \fCenter Y$
\LeftLabel{\scriptsize \xor}
\BI$A \xor B \fCenter X {\XOR} Y$
\DisplayProof
 &&
\AX$X \fCenter A \XOR B$
\RightLabel{\scriptsize \xor}
\UI$X \fCenter A \xor B $
\DisplayProof
\end{tabular}
\end{center}

\item Pure $\mathsf{K}$-type operational rules

\begin{center}
\begin{tabular}{rcl}
\AX$\DTOP \fCenter \Gamma$
\LeftLabel{\scriptsize \dtop}
\UI$\dtop \fCenter \Gamma$
\DisplayProof
 &&
\AxiomC{\phantom{$\DTOP \fCenter \Gamma$}}
\RightLabel{\scriptsize \dtop}
\UI $\DTOP \fCenter \dtop$
\DisplayProof
 \\
\\
\AxiomC{\phantom{$\Gamma \fCenter \DBOT$}}
\LeftLabel{\scriptsize \dbot}
\UI $\dbot \fCenter \DBOT$
\DisplayProof
 &&
\AX$\Gamma \fCenter \DBOT$
\RightLabel{\scriptsize \dbot}
\UI$\Gamma \fCenter \dbot$
\DisplayProof
\\
\\
\AX$\alpha \DAND \beta \fCenter \Gamma$
\LeftLabel{\scriptsize \dand}
\UI$\alpha \dand \beta \fCenter \Gamma$
\DisplayProof
 &&
\AX$\Gamma \fCenter \alpha$
\AX$\Delta \fCenter \beta$
\RightLabel{\scriptsize \dand}
\BI$\Gamma \DAND \Delta \fCenter \alpha \dand \beta$
\DisplayProof
 \\
\\
\AX$\alpha \fCenter \Gamma$
\AX$\beta \fCenter \Delta$
\LeftLabel{\scriptsize \dor}
\BI$\alpha \dor \beta \fCenter \Gamma {\DOR} \Delta$
\DisplayProof
 &&
\AX$\Gamma \fCenter \alpha \DOR \beta$
\RightLabel{\scriptsize \dor}
\UI$\Gamma \fCenter \alpha \dor \beta $
\DisplayProof
\\
\\
\AX $\DNEG \alpha \fCenter \Gamma$
\LeftLabel{\scriptsize  \dneg}
\UI$\dneg \alpha \fCenter \Gamma$
\DisplayProof
 &&
\AX$\Gamma \fCenter \DNEG\alpha $
\RightLabel{\scriptsize \dneg}
\UI$\Gamma \fCenter \dneg \alpha $
\DisplayProof

\end{tabular}
\end{center}

\item Multi-type operational rules
\begin{center}
\begin{tabular}{rcl}
\AX$\WCIR A \fCenter \Gamma$
\LeftLabel{\scriptsize \wdia}
\UI$\wdia A \fCenter \Gamma$
\DisplayProof
&
&
\AX$X \fCenter \WCIR A$
\RightLabel{\scriptsize \wdia}
\UI$X \fCenter \wdia A$
\DisplayProof
\\

\\
\AX$\alpha \fCenter \Gamma$
\LeftLabel{\scriptsize \wbox}
\UI$\wbox\alpha \fCenter \WBOX\Gamma$
\DisplayProof
&
&
\AX$X \fCenter \WBOX\alpha$
\RightLabel{\scriptsize \wbox}
\UI$X \fCenter \wbox\alpha$
\DisplayProof
\end{tabular}
\end{center}
\end{itemize}

\item[-] The proper display calculus D.LQM for lower quasi De Morgan logic consists of all axiom and rules in D.SM plus the following rule:
\[
\AX$X \fCenter Y$
\RightLabel{\scriptsize LQM}
\UI$X \fCenter {\WBOX\WCIR} Y$
\DisplayProof
\]

\item[-] The proper display calculus  D.UQM for upper quasi De Morgan logic consists of all axiom and rules in D.SM plus the following rule:
\[
\AX$\LH\BDIA X \fCenter Y$
\RightLabel{\scriptsize UQM}
\UI$X \fCenter Y$
\DisplayProof
\]

\item[-] The proper display calculus D.DP  for demi pseudocomplemented lattice logic consists of all axiom and rules in D.SM  plus the following rule:
\begin{center}
\begin{tabular}{c}
\AX $\Gamma \DAND \Delta \fCenter \Sigma$
\LeftLabel{\scriptsize $\mathrm{res}_\mathsf{B}$}
\doubleLine
\UI$\Delta \fCenter \DNEG\Gamma \DOR \Sigma$
\DisplayProof
\end{tabular}
\end{center}

\item[-]The proper display calculus D.AP for almost pseudocomplemented lattice logic  consists of all axiom and rules in D.DP  plus the following rule:
\[
\AX $X \fCenter \WBOX\DNEG\WCIR Y$
\RightLabel{\scriptsize AP}
\UI$X \XAND Y\fCenter  \XBOT$
\DisplayProof
\]

\item[-]The proper display calculus D.WS for weak stone logic  consists of all axiom and rules in D.DP  plus the following rule:
\[
\AX $\BDIA X \fCenter \Delta$
\RightLabel{\scriptsize WS}
\UI$\BDIA(\WBOX{\DNEG}\WCIR X \XCRARR\XTOP) \fCenter \Delta$
\DisplayProof
\]
\end{enumerate}

\section{Properties}
\label{sec:properties}

\label{ssec: properties}
\subsection{Soundness}
\label{ssec:soundness}

In the present subsection, we outline the  verification of the soundness of the rules of D.SM (resp.~D.LQM, D.UQM D.DP, D.AP and D.WS) w.r.t.~the semantics of {\em perfect} HSMAs (resp.~HQMAs, HDPLs, HAPLs and HWSAs, see Definition \ref{def:heterogeneous algebras}). The first step consists in interpreting structural symbols as logical symbols according to their (precedent or succedent) position, as indicated at the beginning of Section \ref{sec:proper display}. This makes it possible to interpret sequents as inequalities, and rules as quasi-inequalities. For example, the rules on the left-hand side below are interpreted as the quasi-inequalities on the right-hand side:
 \begin{center}
\begin{tabular}{rcl}
\AX $X \fCenter Y$
\UI$\WCIR X \fCenter \WCIR Y$
\DisplayProof
&$\quad\rightsquigarrow\quad$&
$\forall a \forall b[a \leq  b \Rightarrow h(a) \leq h(b)]$\\
\\
\AX $\BDIA X \fCenter \Delta$
\UI$\BDIA(\WBOX{\DNEG}\WCIR X \XCRARR\XTOP) \fCenter \Delta$
\DisplayProof
&$\quad\rightsquigarrow\quad$&
$\forall a[e_\ell [e(h(a)^*)\xcrarr\top] \leq e_\ell(a)]$
\end{tabular}
\end{center}

The proof of the soundness of the rules in these display calculi then consists in verifying the validity of their corresponding quasi-inequalities in the corresponding class of perfect heterogeneous  algebras. The verification of the soundness of pure-type rules and of the introduction rules following this procedure is routine, and is omitted. The validity of the quasi-inequalities corresponding to multi-type structural rules follows straightforwardly from the observation that the quasi-inequality corresponding to each rule is obtained by running the algorithm ALBA (cf.~Section 3.4 \cite{greco2016unified}) on some of the  defining inequalities of its corresponding heterogeneous algebras.\footnote{Indeed, as discussed in \cite{greco2016unified}, the soundness of the rewriting rules of ALBA only depends on the order-theoretic properties of the interpretation of the logical connectives and their adjoints and residuals. The fact that some of these maps are not internal operations but have different domains and codomains does not make any substantial difference.} For instance, the soundness of the characteristic rule of D.WS on HWSAs follows from the validity of the inequality $(xi)$ in every HWSA (discussed in Section \ref{sec: multi-type language}) and from the soundness of the following ALBA reduction in every HWSA:
\begin{center}
\begin{tabular}{rll}
&$\forall a[\top \leq e(h(a)^*) \vee e((he(h(a)^*))^*)]$&\\
iff&$\forall a\forall b \forall c[b \leq a \,\&\, c \leq e(h(a)^*) \Rightarrow \top \leq e(h(b)^*) \vee e(h(c)^*)]$&\\
iff&$\forall a\forall b \forall c[b \leq a \,\&\, a \leq h_r(e_\ell(c)^*) \Rightarrow \top \leq e(h(b)^*) \vee e(h(c)^*)]$&\\
iff&$\forall b \forall c[b \leq h_r(e_\ell(c)^*) \Rightarrow \top \leq e(h(b)^*) \vee e(h(c)^*)]$&\\
iff&$\forall b \forall c[b \leq h_r(e_\ell(c)^*) \Rightarrow e(h(c)^*)\xcrarr\top  \leq e(h(b)^*)]$&\\
iff&$\forall b \forall c[b \leq h_r(e_\ell(c)^*) \Rightarrow b \leq h_r(e_\ell [e(h(c)^*)\xcrarr\top]^*)]$&\\
iff&$\forall c[h_r(e_\ell(c)^*) \leq h_r(e_\ell [e(h(c)^*)\xcrarr\top]^*)]$&\\
iff&$\forall c[e_\ell(c)^* \leq e_\ell [e(h(c)^*)\xcrarr\top]^*]$&$h_r$ is injective\\
iff&$\forall c[e_\ell [e(h(c)^*)\xcrarr\top] \leq e_\ell(c)]$&$^*$ is injective\\

\end{tabular}
\end{center}

\subsection{Completeness}\label{ssec: completeness}
In the present subsection, we show that the translations of the axioms and rules of $\msf{SM}$, $\msf{LQM}$,  $\msf{UQM}$, $\msf{DP}$, $\msf{AP}$ and $\msf{WS}$  are derivable in D.SM, D.LQM, D.UQM, D.DP, D.AP and D.WS, respectively. Then, the completeness of these display calculi w.r.t. the classes of SMAs, LQMAs, UQMAs, DPLs, APLs and WSAs immediately follows from  the completeness of $\msf{SM}$, $\msf{LQM}$, $\msf{UQM}$, $\msf{DP}$, $\msf{AP}$ and $\msf{WS}$ (cf.~Theorem \ref{completeness: H.SDM}).

\begin{proposition}
For every $A \in \mathcal{L}$, the sequent $A^{\tau} \fCenter A^{\tau}$ is derivable in all display calculi introduced in Section \ref{ssec:display calculi}.
\end{proposition}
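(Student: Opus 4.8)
The plan is to argue by induction on the complexity of the $\mathcal{L}$-formula $A$; this is the standard ``atomic-identity-expansion'' argument adapted to the multi-type setting. First I would record the reduction that makes the statement uniform: the calculi D.LQM, D.UQM, D.DP, D.AP and D.WS are all obtained by \emph{adding} rules to D.SM, so it suffices to exhibit, for each $A$, a derivation of $A^{\tau} \vdash A^{\tau}$ in the base calculus D.SM; the very same derivation then witnesses derivability in every extension.

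For the base cases: when $A = p$ the sequent $p \vdash p$ is the axiom Id. When $A = \top$ I would start from the premise-free right rule $\XTOP \vdash \xtop$ and apply the left rule for $\top$ (instantiated at $X = \xtop$) to obtain $\xtop \vdash \xtop$; dually, when $A = \bot$ I would start from the premise-free left rule $\xbot \vdash \XBOT$ and apply the right rule for $\bot$ to obtain $\xbot \vdash \xbot$. The cases $A = B \xand C$ and $A = B \xor C$ are the usual expansions: from the induction hypotheses $B^{\tau} \vdash B^{\tau}$ and $C^{\tau} \vdash C^{\tau}$, the right $\xand$-rule yields $B^{\tau} \XAND C^{\tau} \vdash B^{\tau} \xand C^{\tau}$, and the left $\xand$-rule then yields $B^{\tau} \xand C^{\tau} \vdash B^{\tau} \xand C^{\tau}$; the disjunction case is obtained dually, applying first the left and then the right $\xor$-rule.

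The only genuinely multi-type case, and the one I expect to be the crux, is $A = \neg B$, where $(\neg B)^{\tau} = \wbox \dneg \circ B^{\tau}$ and the derivation must pass through all three heterogeneous/De Morgan connectives. Starting from the induction hypothesis $B^{\tau} \vdash B^{\tau}$ I would build the derivation in three layers. First, apply the structural monotonicity rule $\WCIR$ to obtain $\WCIR B^{\tau} \vdash \WCIR B^{\tau}$, then the left and right operational rules for $\circ$ to obtain $\circ B^{\tau} \vdash \circ B^{\tau}$. Second, apply the contraposition rule (cont) to obtain $\DNEG \circ B^{\tau} \vdash \DNEG \circ B^{\tau}$, then the left and right operational rules for $\dneg$ to obtain $\dneg \circ B^{\tau} \vdash \dneg \circ B^{\tau}$. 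Third, apply the left $\wbox$-rule to obtain $\wbox \dneg \circ B^{\tau} \vdash \WBOX \dneg \circ B^{\tau}$, and finally the right $\wbox$-rule to obtain $\wbox \dneg \circ B^{\tau} \vdash \wbox \dneg \circ B^{\tau}$, which is exactly $(\neg B)^{\tau} \vdash (\neg B)^{\tau}$.

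The delicate point in this last case is purely bookkeeping of types: each of the three connectives crosses between $\mathsf{DL}$ and $\mathsf{K}$, so at every layer one must check that the structural operator produced on one side by an introduction rule (namely $\WCIR$, $\DNEG$, or $\WBOX$) is precisely the operator to which the next available rule applies, and that the pattern ``introduce the connective, then collapse its structural copy'' closes up correctly in both the precedent and succedent positions. Apart from verifying these type-matching constraints, every step is a single instance of a rule already present in D.SM, so no case splitting or auxiliary lemma is needed and the argument is entirely uniform across all six calculi.
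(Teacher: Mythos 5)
Your proposal is correct and takes essentially the same approach as the paper: induction on $A$, with the crucial case $A = \neg B$ handled by exactly the same derivation (the structural rule $\WCIR$, the operational rules for $\wdia$, the rule $\mathrm{cont}$, the operational rules for $\rdneg$, then those for $\wbox$), together with the same observation that a derivation in D.SM serves verbatim for all the extensions. The only divergence is cosmetic: in the $\xand$ and $\xor$ cases you feed the two induction hypotheses directly into the two-premise operational rule and then collapse the structural connective, which is slightly shorter than the paper's derivations (these detour through weakening, exchange and contraction); both are valid.
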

\begin{proof}
By inducution on $A \in \mathcal{L}$.
The proof of base cases: $ A := \top$, $A := \bot$ and $A := p$, are straightforward and are omitted.

Inductive cases: 
\begin{itemize}
\item as to $ A := \neg B$,
{\fns
\begin{center}
\AxiomC{}
\LeftLabel{\scriptsize ind.hyp.}
\UI $B^\tau \fCenter B^\tau$
\LeftLabel{\scriptsize \WCIR}
\UI $\WCIR B^\tau \fCenter \WCIR B^\tau$
\UI $\wdia B^\tau \fCenter \WCIR B^\tau$
\UI $\wdia B^\tau \fCenter \bbox B^\tau$
\RL{\scriptsize cont}
\UI $\DNEG \bbox B^\tau \fCenter \DNEG \wdia B^\tau$
\UI $\DNEG \bbox B^\tau \fCenter \rdneg \wdia B^\tau$
\UI $\dneg \bbox B^\tau \fCenter \rdneg \wdia B^\tau$
\UI $\wbox\dneg \bbox B^\tau \fCenter \WBOX\rdneg \wdia B^\tau$
\UI $\wbox\dneg \bbox B^\tau \fCenter \wbox\rdneg \wdia B^\tau$
\DisplayProof
\end{center}
}
\item as to $ A := B \vee C$,\\
{\fns
\[\AxiomC{}
\LeftLabel{\scriptsize ind.hyp.}
\UIC{$B^\tau \fCenter B^\tau$}
\RL{\scriptsize W}
\UIC{$B^\tau \fCenter B^\tau \XOR C^\tau$}
\AxiomC{}
\LeftLabel{\scriptsize ind.hyp.}
\UIC{$C^\tau \fCenter C^\tau$}
\RL{\scriptsize W}
\UIC{$C^\tau \fCenter C^\tau \XOR B^\tau$}
\RL{\scriptsize E}
\UIC{$C^\tau \fCenter B^\tau \XOR C^\tau$}
\BIC{$B^\tau \xor C^\tau \fCenter (B^\tau \XOR C^\tau) \XOR (B^\tau \XOR C^\tau)$}
\RL{\scriptsize C}
\UIC{$B^\tau \xor C^\tau \fCenter B^\tau \xor C^\tau$}
\DisplayProof
\]
}

\item  as to $ A := B \wedge C$, \\
{\fns
\[
\AxiomC{}
\LeftLabel{\scriptsize ind.hyp.}
\UI  $B^\tau \fCenter B^\tau$
\LL{\scriptsize W}
\UI $B^\tau \XAND C^\tau \fCenter B^\tau$
\AxiomC{}
\LeftLabel{\scriptsize ind.hyp.}
\UI $C^\tau \fCenter C^\tau$
\LL{\scriptsize W}
\UI $C^\tau \XAND B^\tau \fCenter C^\tau$
\LL{\scriptsize E}
\UI $B^\tau \XAND C^\tau \fCenter C^\tau$
\BI $(B^\tau \XAND C^\tau) \XAND (B^\tau \XAND C^\tau) \fCenter B^\tau \xand C^\tau$
\LL{\scriptsize C}
\UI $B^\tau \xand C^\tau \fCenter B^\tau \xand C^\tau$
\DisplayProof
\]
}
\end{itemize}
\end{proof}

\begin{proposition}
For every $A, B \in \mathcal{L}$, if $A \vdash B$ is derivable in any logic introduced in \ref{ssec:Hilbert system}, then $A^{\tau} \fCenter B^{\tau}$ is derivable in its respective display calculus.
\end{proposition}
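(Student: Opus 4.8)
The plan is to argue by induction on the height of a derivation of $A \vdash B$ in the relevant single-type Hilbert calculus, showing that at each step the translated sequent $A^{\tau}\vdash B^{\tau}$ is derivable in the corresponding display calculus. Since D.SM is a subcalculus of each of D.LQM, D.UQM, D.DP, D.AP and D.WS, and each single-type logic extends $\msf{SM}$ by exactly one characteristic axiom, it suffices to (1) simulate the axioms and rules common to all the logics inside D.SM, so that the resulting derivations transfer verbatim to every extension, and (2) for each extension additionally derive the translation of its characteristic axiom in the corresponding extended calculus.

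For the inductive step I would treat the four rule schemata of $\msf{SM}$. Transitivity of $\vdash$ is simulated by $\mathrm{Cut}_\mathsf{L}$ applied to the premises $A^{\tau}\vdash B^{\tau}$ and $B^{\tau}\vdash C^{\tau}$; the binary meet- and join-introduction rules are simulated by the operational rules for $\xand$ and $\xor$ together with the weakening, exchange and contraction rules of type $\mathsf{DL}$, following exactly the pattern used in the preceding proposition for the cases $A\wedge B$ and $A\vee B$. The contraposition rule $A\vdash B\,/\,\neg B\vdash\neg A$ is the only one crossing types: starting from $A^{\tau}\vdash B^{\tau}$ I would apply the monotonicity rule $\WCIR$, the operational rules for $\bbox$, the rule (cont) at type $\mathsf{K}$ to flip the inequality, and finally the operational rules for $\dneg$ and $\wbox$, obtaining $\wbox\dneg\bbox B^{\tau}\vdash\wbox\dneg\bbox A^{\tau}$, i.e.\ $(\neg B)^{\tau}\vdash(\neg A)^{\tau}$. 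This is precisely the derivation displayed in the preceding proposition for $A:=\neg B$, now fed the premise $A^{\tau}\vdash B^{\tau}$ rather than an instance of the identity axiom, so it requires no new work.

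For the base case I would check that the translation of each axiom of $\msf{SM}$ is derivable in D.SM. The lattice axioms (boundedness $\xbot\vdash A^{\tau}$ and $A^{\tau}\vdash\xtop$, the conjunction/disjunction projections and injections, and distributivity) are derivable purely inside type $\mathsf{DL}$ from the pure-$\mathsf{DL}$ operational and structural rules, and their verification is routine. The genuinely multi-type content lies in the six negation axioms, whose translations are $(i)$--$(vi)$ of Section \ref{sec: multi-type language}: here one must produce explicit derivations using the adjunction rules $\mathrm{adj}_\mathsf{LD}$, $\mathrm{adj}_\mathsf{DL}$, $\mathrm{adj}_*$ and the heterogeneous structural rules $\WCIR$, $\WCIR\WBOX$, $\BCIR$, $\BDIA\DTOP$ and $\WBOX\DBOT$, together with (cont) and the pure-$\mathsf{K}$ rules. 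These are exactly the analytic rules computed via ALBA (as in \cite{greco2016unified}) from the defining inequalities (H1)--(H5) of heterogeneous semi De Morgan algebras (cf.\ Definition \ref{def:heterogeneous algebras}), which is precisely what makes the translated axioms derivable. Constructing these derivations is the \emph{main obstacle}, since the iterated negations in $(i)$--$(iii)$ force one to display sub-structures back and forth across the type boundary several times before the complement $\dneg$ and (cont) can be applied at type $\mathsf{K}$.

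Finally, for each extension I would derive the translation of its single extra axiom in the corresponding extended calculus: $(vii)$ in D.LQM using the rule LQM, $(viii)$ in D.UQM using UQM, $(ix)$ in D.DP using $\mathrm{res}_\mathsf{B}$, $(x)$ in D.AP (which extends D.DP) using AP, and $(xi)$ in D.WS (also extending D.DP) using WS. As in the basic case, each added structural rule is the ALBA-generated analytic rule corresponding to the relevant heterogeneous defining inequality ((H6a), (H6b), (H7), (H8)), so the derivability of each translated characteristic axiom is underwritten by the general correspondence between analytic inductive inequalities and their structural rules; only the concrete proof-search for $(ix)$--$(xi)$, where a $\mathsf{DL}$-meet or $\mathsf{DL}$-join must be transported through $e$ and $h$ to meet the Boolean complement at type $\mathsf{K}$, requires genuine care.
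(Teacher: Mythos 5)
Your proposal is correct and follows essentially the same route as the paper: reduce to deriving the translated axioms (the Hilbert rules being simulated by $\mathrm{Cut}_\mathsf{L}$, the operational rules for $\xand$/$\xor$ with structural contraction, and the contraposition pattern from the identity-sequent proposition), then construct explicit derivations of $(i)$--$(vi)$ in D.SM and of each characteristic axiom $(vii)$--$(xi)$ in its extended calculus using the corresponding added rule (LQM, UQM, $\mathrm{res}_\mathsf{B}$, AP, WS). The paper's proof is exactly this, with the explicit proof trees for the translated axioms constituting its entire content.
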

\begin{proof}
It is enough to show the statement of the proposition on the axioms. For the sake of readability, in what follows, we suppress the translation symbol $(\cdot)^\tau$. As to the axioms in $\msf{SM}$:
\begin{itemize}
\item $\neg\top \fCenter \bot  \quad \rightsquigarrow  \quad  \wbox\dneg\bbox\xtop \fCenter \xbot$,

{\fns
\[
\AX $\XTOP \fCenter \xtop$
\LL {\scriptsize $\mathrm{W}_\mathsf{L}$}
\UI $\LH\DNEG\DBOT \XAND \XTOP \fCenter \xtop$
\RL {\scriptsize $\XTOP$}
\UI $\LH\DNEG\DBOT\fCenter \xtop$
\UI  $\DNEG\DBOT \fCenter \WCIR\xtop$
\UI  $\DNEG\DBOT \fCenter \bbox\xtop$
\UI  $\DNEG\bbox\xtop \fCenter \DBOT$
\UI  $\dneg\bbox\xtop \fCenter \DBOT$
\UI  $\wbox\dneg\bbox\xtop \fCenter \WBOX\DBOT$
\RightLabel{\scriptsize ${\WBOX}\DBOT$}
\UI  $\wbox\dneg\bbox\xtop \fCenter \XBOT$
\UI  $\wbox\dneg\bbox\xtop \fCenter \xbot$
\DisplayProof
\]
}
\item $\top \fCenter \neg\bot \rightsquigarrow  \quad  \xtop \fCenter \wbox\rdneg\wdia\xtop$,

{\fns
\[
\AX $\xbot \fCenter \XBOT$
\RL {\scriptsize $\mathrm{W}_\mathsf{L}$}
\UI $\xbot \fCenter \XBOT \XOR \RH\DNEG\DTOP$
\RL {\scriptsize \XBOT}
\UI $\xbot \fCenter \RH\DNEG\DTOP$
\UI $\WCIR\xbot \fCenter \DNEG\DTOP$
\UI $\wdia\xbot \fCenter \DNEG\DTOP$
\UI  $\DTOP \fCenter \DNEG\wdia\xbot$
\UI  $\DTOP \fCenter \rdneg\wdia\xbot$
\LeftLabel{\scriptsize $\BDIA\XTOP$}
\UI  $\BDIA\XTOP \fCenter \rdneg\wdia\xbot$
\UI  $\XTOP \fCenter \WBOX\rdneg\wdia\xbot$
\UI  $\XTOP \fCenter \wbox\rdneg\wdia\xbot$
\UI  $\xtop \fCenter \wbox\rdneg\wdia\xbot$
\DisplayProof
\]
}

\item $\neg A \fCenter \neg\neg\neg A \quad \rightsquigarrow  \quad \wbox\dneg\bbox A \fCenter  \wbox\rdneg\wdia\wbox\dneg\bbox \wbox\rdneg\wdia A$ 

and $\neg\neg\neg A \fCenter \neg A   \quad \rightsquigarrow  \quad   \wbox\dneg\bbox\wbox\rdneg\wdia\wbox\dneg\bbox A \fCenter \wbox\rdneg\wdia A$,

{\fns
\[
\AX $A \fCenter A$
\RightLabel{\scriptsize \WCIR}
\UI  $\WCIR A \fCenter \WCIR A$
\UI  $\wdia A \fCenter \WCIR A$
\UI  $\wdia A \fCenter \bbox A$
\RightLabel{\scriptsize cont}
\UI  $\DNEG \bbox A \fCenter \DNEG\wdia A$
\UI  $\DNEG \bbox A  \fCenter \rdneg\wdia A$
\RightLabel{\scriptsize {\WCIR}\WBOX}
\UI  $\DNEG \bbox A \fCenter \WCIR\WBOX\rdneg\wdia A$
\UI  $\LH\DNEG \bbox A \fCenter \WBOX\rdneg\wdia A$
\UI  $\LH\DNEG \bbox A \fCenter \wbox\rdneg\wdia A$
\UI  $\DNEG \bbox A \fCenter \WCIR\wbox\rdneg\wdia A$
\UI  $\DNEG \bbox A  \fCenter \bbox\wbox\rdneg\wdia A$
\UI  $\DNEG\bbox\wbox\rdneg\wdia A \fCenter  \bbox A$
\UI  $\dneg\bbox\wbox\rdneg\wdia A \fCenter  \bbox A$
\UI  $\wbox\dneg\bbox\wbox\rdneg\wdia A \fCenter \WBOX \bbox A$
\RightLabel{\scriptsize \WCIR}
\UI  $\WCIR\wbox\dneg\bbox\wbox\rdneg\wdia A \fCenter \WCIR\WBOX \bbox A$
\UI  $\wdia\wbox\dneg\bbox\wbox\rdneg\wdia A \fCenter \WCIR\WBOX \bbox A$
\RightLabel{\scriptsize {\WCIR}\WBOX}
\UI  $\wdia\wbox\dneg\bbox\wbox\rdneg\wdia A \fCenter  \bbox A$
\RightLabel{\scriptsize cont}
\UI  $\DNEG \bbox A  \fCenter \DNEG\wdia\wbox\dneg\bbox\wbox\rdneg\wdia A$
\UI  $\DNEG \bbox A \fCenter \rdneg\wdia\wbox\dneg\bbox\wbox\rdneg\wdia A $
\UI  $\dneg \bbox A \fCenter \rdneg\wdia\wbox\dneg\bbox\wbox\rdneg\wdia A $
\UI  $\wbox\dneg\bbox A \fCenter \WBOX\rdneg\wdia\wbox\dneg\bbox\wbox\rdneg\wdia A$
\UI  $\wbox\dneg \bbox A \fCenter \wbox\rdneg\wdia\wbox\dneg\bbox\wbox\rdneg\wdia A$
\DisplayProof
\quad\quad
\AX $A \fCenter A$
\RightLabel{\scriptsize \WCIR}
\UI  $\WCIR A \fCenter \WCIR A$
\UI  $\wdia A \fCenter \WCIR A$
\UI  $\wdia A \fCenter \bbox A$
\RightLabel{\scriptsize cont}
\UI  $\DNEG \bbox A \fCenter \DNEG\wdia A$
\UI  $\dneg \bbox A \fCenter \DNEG\wdia A$
\UI  $\wbox\dneg \bbox A \fCenter \WBOX{\DNEG}\wdia A$
\RightLabel{\scriptsize \WCIR}
\UI  $\WCIR\wbox\dneg \bbox A \fCenter \WCIR\WBOX{\DNEG}\wdia A$
\UI  $\wdia\wbox\dneg \bbox A \fCenter \WCIR\WBOX{\DNEG}\wdia A$
\RightLabel{\scriptsize {\WCIR}\WBOX}
\UI  $\wdia\wbox\dneg \bbox A \fCenter \DNEG\wdia A$
\UI  $\wdia A \fCenter \DNEG\wdia\wbox\dneg \bbox A$
\UI  $\wdia A \fCenter \rdneg\wdia\wbox\dneg \bbox A$
\RightLabel{\scriptsize {\WCIR}\WBOX}
\UI  $\wdia A \fCenter  \WCIR\WBOX\rdneg\wdia\wbox\dneg \bbox A$
\UI  $\LH\wdia A \fCenter  \WBOX\rdneg\wdia\wbox\dneg \bbox A$
\UI  $\LH\wdia A \fCenter  \wbox\rdneg\wdia\wbox\dneg \bbox A$
\UI  $\wdia A \fCenter  \WCIR\wbox\rdneg\wdia\wbox\dneg \bbox A$
\UI  $\wdia A \fCenter  \bbox\wbox\rdneg\wdia\wbox\dneg \bbox A$
\RightLabel{\scriptsize cont}
\UI  $\DNEG\bbox\wbox\rdneg\wdia\wbox\dneg \bbox A \fCenter \DNEG\wdia A $
\UI  $\dneg\bbox\wbox\rdneg\wdia\wbox\dneg \bbox A \fCenter \DNEG\wdia A $
\UI   $\dneg\bbox\wbox\rdneg\wdia\wbox\dneg \bbox A \fCenter \rdneg\wdia A $
\UI  $\wbox\dneg\bbox\wbox\rdneg\wdia\wbox\dneg \bbox A \fCenter \WBOX\rdneg\wdia A $
\UI  $\wbox\dneg\bbox\wbox\rdneg\wdia\wbox\dneg \bbox A \fCenter \wbox\rdneg\wdia A $
\DisplayProof
\]
}

\item $\neg A  \wedge \neg B \fCenter \neg (A \vee B) \quad \rightsquigarrow  \quad     \wbox\dneg\bbox A  \xand  \wbox\dneg\bbox B \fCenter \wbox\rdneg\wdia(A \xor B)$,

{\fns
\[
\AXC{$A \fCenter A $}
\RightLabel{\scriptsize \WCIR}
\UIC{$\WCIR A \fCenter \WCIR A $}
\UIC{$\WCIR A \fCenter \bbox A $}
\RightLabel{\scriptsize cont}
\UIC{$\DNEG\bbox A  \fCenter \DNEG\WCIR A$}
\UIC{$\dneg\bbox A  \fCenter\DNEG\WCIR  A$}
\UIC{$\wbox\dneg\bbox A  \fCenter \WBOX\DNEG\WCIR A$}
\LeftLabel{\scriptsize $\mathrm{W}_\mathsf{L}$}
\UIC{$\wbox\dneg\bbox A \XAND \wbox\dneg\bbox B \fCenter \WBOX\DNEG\WCIR A$}
\UIC{$\wbox\dneg\bbox A \xand \wbox\dneg\bbox B \fCenter  \WBOX\DNEG\WCIR  A$}
\UIC{$\BDIA(\wbox\dneg\bbox A \xand \wbox\dneg\bbox B) \fCenter  \DNEG\WCIR  A$}
\UIC{$ \WCIR  A \fCenter  \DNEG\BDIA(\wbox\dneg\bbox A \xand \wbox\dneg\bbox B)$}
\UIC{$A \fCenter  \RH\DNEG\BDIA(\wbox\dneg\bbox A \xand \wbox\dneg\bbox B)$}
\AXC{$B \fCenter B $}
\RightLabel{\scriptsize \WCIR}
\UIC{$\WCIR B \fCenter \WCIR B$}
\UIC{$\WCIR B \fCenter \bbox B $}
\RightLabel{\scriptsize cont}
\UIC{$\DNEG\bbox B  \fCenter \DNEG\WCIR B$}
\UIC{$\dneg\bbox B  \fCenter\DNEG\WCIR  B$}
\UIC{$\wbox\dneg\bbox B \fCenter \WBOX\DNEG\WCIR B$}
\LeftLabel{\scriptsize $\mathrm{W}_\mathsf{L}$}
\UIC{ $\wbox\dneg\bbox B \XAND \wbox\dneg\bbox A \fCenter \WBOX\DNEG\WCIR B$}
\LeftLabel{\scriptsize $\mathrm{E}_\mathsf{L}$}
\UIC{$\wbox\dneg\bbox A \XAND \wbox\dneg\bbox B \fCenter \WBOX\DNEG\WCIR B$}
\UIC{$\wbox\dneg\bbox A \xand \wbox\dneg\bbox B \fCenter  \WBOX\DNEG\WCIR  B$}
\UIC{$\BDIA(\wbox\dneg\bbox A \xand \wbox\dneg\bbox B) \fCenter  \DNEG\WCIR  B$}
\UIC{$ \WCIR  B \fCenter  \DNEG\BDIA(\wbox\dneg\bbox A \xand \wbox\dneg\bbox B)$}
\UIC{$B \fCenter  \RH\DNEG\BDIA(\wbox\dneg\bbox A \xand \wbox\dneg\bbox B)$}
\BIC{$A \xor B \fCenter  \RH\DNEG\BDIA(\wbox\dneg\bbox A \xand \wbox\dneg\bbox B) \XOR \BCIR\DNEG\BDIA(\wbox\dneg\bbox A \xand \wbox\dneg\bbox B)$}
\RightLabel{\scriptsize $\mathrm{C}_\mathsf{L}$}
\UIC{$A \xor B \fCenter  \BCIR\DNEG\BDIA(\wbox\dneg\bbox A \xand \wbox\dneg\bbox B)$}
\UIC{$\WCIR(A \xor B) \fCenter  \DNEG\BDIA(\wbox\dneg\bbox A \xand \wbox\dneg\bbox B)$}
\UIC{$\wdia(A \xor B) \fCenter  \DNEG\BDIA(\wbox\dneg\bbox A \xand \wbox\dneg\bbox B)$}
\UIC{$ \BDIA(\wbox\dneg\bbox A \xand \wbox\dneg\bbox B) \fCenter  \DNEG\wdia(A \xor B)$}
\UIC{$ \BDIA(\wbox\dneg\bbox A \xand \wbox\dneg\bbox B) \fCenter  \rdneg\wdia(A \xor B)$}
\UIC{$\wbox\dneg\bbox A \xand \wbox\dneg\bbox B \fCenter   \WBOX\rdneg\wdia(A \xor B)$}
\UIC{$\wbox\dneg\bbox A \xand \wbox\dneg\bbox B \fCenter   \wbox\rdneg\wdia(A \xor B)$}
\DisplayProof
\]
}
\newpage
\item {\small $ \neg\neg A  \wedge \neg\neg B \fCenter \neg\neg (A \wedge B)\quad\rightsquigarrow \quad\wbox\dneg\bbox\wbox\rdneg\wdia A  \xand   \wbox\dneg\bbox\wbox\rdneg\wdia B \fCenter \wbox\rdneg\wdia\wbox\dneg\bbox (A \xand B) $},
\vspace{0.5cm}
{\fns
\[
\AXC{$A \fCenter A $}
\RightLabel{\scriptsize \WCIR}
\UIC{$\WCIR A \fCenter \WCIR A $}
\UIC{$\wdia A \fCenter \WCIR A $}
\RightLabel{\scriptsize cont}
\UIC{$\DNEG\WCIR A  \fCenter \DNEG\wdia A$}
\UIC{$\DNEG\WCIR A  \fCenter \rdneg\wdia A$}
\RightLabel{\scriptsize {\WCIR}\WBOX}
\UIC{$\DNEG\WCIR A  \fCenter \WCIR\WBOX\rdneg\wdia A$}
\UIC{$\LH\DNEG\WCIR A  \fCenter \WBOX\rdneg\wdia A$}
\UIC{$\LH\DNEG\WCIR A  \fCenter \wbox\rdneg\wdia A$}
\UIC{$\DNEG\WCIR A  \fCenter \WCIR\wbox\rdneg\wdia A$}
\UIC{$\DNEG\WCIR A  \fCenter \bbox\wbox\rdneg\wdia A$}
\UIC{$\DNEG\bbox\wbox\rdneg\wdia A  \fCenter\WCIR A$}
\UIC{$\dneg\bbox\wbox\rdneg\wdia A  \fCenter \WCIR A$}
\UIC{$\wbox\dneg\bbox\wbox\rdneg\wdia A  \fCenter \WBOX\WCIR A$}
\LeftLabel{\scriptsize $\mathrm{W}_\mathsf{L}$}
\UIC{$\wbox\dneg\bbox\wbox\rdneg\wdia A  \XAND \wbox\dneg\bbox\wbox\rdneg\wdia  B \fCenter \WBOX\WCIR A$}
\UIC{$\wbox\dneg\bbox\wbox\rdneg\wdia A  \xand \wbox\dneg\bbox\wbox\rdneg\wdia  B \fCenter \WBOX\WCIR A$}
\UIC{$\BDIA(\wbox\dneg\bbox\wbox\rdneg\wdia A  \xand \wbox\dneg\bbox\wbox\rdneg\wdia  B) \fCenter \WCIR A$}
\UIC{$\LH\BDIA(\wbox\dneg\bbox\wbox\rdneg\wdia A  \xand \wbox\dneg\bbox\wbox\rdneg\wdia  B) \fCenter A$}
\AXC{$B \fCenter B$}
\RightLabel{\scriptsize \WCIR}
\UIC{$\WCIR B \fCenter \WCIR B $}
\UIC{$\wdia B \fCenter \WCIR B $}
\RightLabel{\scriptsize cont}
\UIC{$\DNEG\WCIR B  \fCenter \DNEG\wdia B$}
\UIC{$\DNEG\WCIR B  \fCenter \rdneg\wdia B$}
\RightLabel{\scriptsize {\WCIR}\WBOX}
\UIC{$\DNEG\WCIR B  \fCenter \WCIR\WBOX\rdneg\wdia B$}
\UIC{$\LH\DNEG\WCIR B  \fCenter \WBOX\rdneg\wdia B$}
\UIC{$\LH\DNEG\WCIR B  \fCenter \wbox\rdneg\wdia B$}
\UIC{$\DNEG\WCIR B  \fCenter \WCIR\wbox\rdneg\wdia B$}
\UIC{$\DNEG\WCIR B  \fCenter \bbox\wbox\rdneg\wdia B$}
\UIC{$\DNEG\bbox\wbox\rdneg\wdia B  \fCenter\WCIR B$}
\UIC{$\dneg\bbox\wbox\rdneg\wdia B  \fCenter \WCIR B$}
\UIC{$\wbox\dneg\bbox\wbox\rdneg\wdia B  \fCenter \WBOX\WCIR B$}
\LeftLabel{\scriptsize $\mathrm{W}_\mathsf{L}$}
\UIC{$\wbox\dneg\bbox\wbox\rdneg\wdia B  \XAND \wbox\dneg\bbox\wbox\rdneg\wdia A \fCenter \WBOX\WCIR B$}
\LeftLabel{\scriptsize $\mathrm{E}_\mathsf{L}$}
\UIC{$\wbox\dneg\bbox\wbox\rdneg\wdia A \XAND \wbox\dneg\bbox\wbox\rdneg\wdia B  \fCenter \WBOX\WCIR B$}
\UIC{$\wbox\dneg\bbox\wbox\rdneg\wdia B  \xand \wbox\dneg\bbox\wbox\rdneg\wdia A \fCenter \WBOX\WCIR B$}
\UIC{$\BDIA(\wbox\dneg\bbox\wbox\rdneg\wdia A \xand \wbox\dneg\bbox\wbox\rdneg\wdia B) \fCenter \WCIR B$}
\UIC{$\LH\BDIA(\wbox\dneg\bbox\wbox\rdneg\wdia A \xand \wbox\dneg\bbox\wbox\rdneg\wdia B) \fCenter B$}
\BIC{$\LH\BDIA(\wbox\dneg\bbox\wbox\rdneg\wdia A \xand \wbox\dneg\bbox\wbox\rdneg\wdia B) \XAND \LH\BDIA(\wbox\dneg\bbox\wbox\rdneg\wdia A \xand \wbox\dneg\bbox\wbox\rdneg\wdia B) \fCenter A \xand B$}
\LeftLabel{\scriptsize $\mathrm{C}_\mathsf{L}$}
\UIC{$\LH\BDIA(\wbox\dneg\bbox\wbox\rdneg\wdia A \xand \wbox\dneg\bbox\wbox\rdneg\wdia B) \fCenter A \xand B$}
\UIC{$\BDIA(\wbox\dneg\bbox\wbox\rdneg\wdia A \xand \wbox\dneg\bbox\wbox\rdneg\wdia B) \fCenter \WCIR(A \xand B)$}
\UIC{$\BDIA(\wbox\dneg\bbox\wbox\rdneg\wdia A \xand \wbox\dneg\bbox\wbox\rdneg\wdia B) \fCenter \bbox(A \xand B)$}
\RightLabel{\scriptsize cont}
\UIC{$\DNEG\bbox(A \xand B) \fCenter \DNEG\BDIA(\wbox\dneg\bbox\wbox\rdneg\wdia A \xand \wbox\dneg\bbox\wbox\rdneg\wdia B)$}
\UIC{$\dneg\bbox(A \xand B) \fCenter \DNEG\BDIA(\wbox\dneg\bbox\wbox\rdneg\wdia A \xand \wbox\dneg\bbox\wbox\rdneg\wdia B)$}
\UIC{$\wbox\dneg\bbox(A \xand B) \fCenter \WBOX\DNEG\BDIA(\wbox\dneg\bbox\wbox\rdneg\wdia A \xand \wbox\dneg\bbox\wbox\rdneg\wdia B)$}
\RightLabel{\scriptsize \WCIR}
\UIC{$\WCIR\wbox\dneg\bbox(A \xand B) \fCenter \WCIR\WBOX\DNEG\BDIA(\wbox\dneg\bbox\wbox\rdneg\wdia A \xand \wbox\dneg\bbox\wbox\rdneg\wdia B)$}
\RightLabel{\scriptsize {\WCIR}\WBOX}
\UIC{$\WCIR\wbox\dneg\bbox(A \xand B) \fCenter \DNEG\BDIA(\wbox\dneg\bbox\wbox\rdneg\wdia A \xand \wbox\dneg\bbox\wbox\rdneg\wdia B)$}
\UIC{$\wdia\wbox\dneg\bbox(A \xand B) \fCenter \DNEG\BDIA(\wbox\dneg\bbox\wbox\rdneg\wdia A \xand \wbox\dneg\bbox\wbox\rdneg\wdia B)$}
\UIC{$\BDIA(\wbox\dneg\bbox\wbox\rdneg\wdia A \xand \wbox\dneg\bbox\wbox\rdneg\wdia B) \fCenter \DNEG\wdia\wbox\dneg\bbox(A \xand B)$}
\UIC{$\BDIA(\wbox\dneg\bbox\wbox\rdneg\wdia A \xand \wbox\dneg\bbox\wbox\rdneg\wdia B) \fCenter \rdneg\wdia\wbox\dneg\bbox(A \xand B)$}
\UIC{$\wbox\dneg\bbox\wbox\rdneg\wdia A \xand \wbox\dneg\bbox\wbox\rdneg\wdia B \fCenter \WBOX\rdneg\wdia\wbox\dneg\bbox(A \xand B)$}
\UIC{$\wbox\dneg\bbox\wbox\rdneg\wdia A \xand \wbox\dneg\bbox\wbox\rdneg\wdia B \fCenter \wbox\rdneg\wdia\wbox\dneg\bbox(A \xand B)$}
\DisplayProof
\]
}
\newpage
 As to the characterizing axioms of LQM and UQM:
\item $ A \fCenter \neg\neg A \quad\rightsquigarrow \quad A \fCenter \wbox\rdneg\wdia\wbox\dneg\bbox A $ and $\neg\neg A \fCenter A \quad\rightsquigarrow \quad \wbox\dneg\bbox\wbox\rdneg\bbox A  \fCenter A$,

\[
\AX $A \fCenter  A$
\RL{\scriptsize LQM}
\UI $A \fCenter \WBOX\WCIR A$
\UI $\BDIA A \fCenter \WCIR A$
\UI $\BDIA A \fCenter \bbox A$
\RL{\scriptsize cont}
\UI $\DNEG\bbox A \fCenter \DNEG\BDIA A$
\UI $\dneg\bbox A \fCenter \DNEG\BDIA A$
\UI $\wbox\dneg\bbox A \fCenter \WBOX\DNEG\BDIA A$
\RL{\scriptsize $\WCIR$}
\UI $\WCIR\wbox\dneg\bbox A \fCenter \WCIR\WBOX\DNEG\BDIA A$
\RL{\scriptsize $\WCIR\WBOX$}
\UI $\WCIR\wbox\dneg\bbox A \fCenter \DNEG\BDIA A$
\UI $\wdia\wbox\dneg\bbox A \fCenter \DNEG\BDIA A$
\UI $\BDIA A \fCenter \DNEG\wdia\wbox\dneg\bbox A$
\UI $\BDIA A \fCenter \rdneg\wdia\wbox\dneg\bbox A$
\UI $A \fCenter \WBOX\rdneg\wdia\wbox\dneg\bbox A$
\UI $A \fCenter \wbox\rdneg\wdia\wbox\dneg\bbox A$
\DP
\quad\quad
\AX $A \fCenter A$
\RL{\scriptsize $\WCIR$}
\UI $\WCIR A  \fCenter \WCIR A$
\UI $\bbox A  \fCenter \WCIR A$
\RightLabel{\scriptsize cont}
\UI $\DNEG\WCIR A  \fCenter \DNEG\bbox A$
\UI $\DNEG\WCIR A  \fCenter \rdneg\bbox A$
\RightLabel{\scriptsize {\WCIR}\WBOX}
\UI $\DNEG\WCIR A  \fCenter \WCIR\WBOX\rdneg\bbox A$
\UI $\LH\DNEG\WCIR A  \fCenter \WBOX\rdneg\bbox A$
\UI $\LH\DNEG\WCIR A  \fCenter \wbox\rdneg\bbox A$
\UI $\DNEG\WCIR A  \fCenter \WCIR\wbox\rdneg\bbox A$
\UI $\DNEG\WCIR A  \fCenter \bbox\wbox\rdneg\bbox A$
\UI $\DNEG\bbox\wbox\rdneg\bbox A  \fCenter \WCIR A$
\UI $\dneg{\bbox\wbox}\rdneg\bbox A  \fCenter \WCIR A$
\UI $\wbox\dneg{\bbox\wbox}\rdneg\bbox A  \fCenter \WBOX\WCIR A$
\UI $\BDIA\wbox\dneg{\bbox\wbox}\rdneg\bbox A  \fCenter \WCIR A$
\UI $\LH\BDIA\wbox\dneg{\bbox\wbox}\rdneg\bbox A  \fCenter A$
\RL{\scriptsize UQM}
\UI $\wbox\dneg{\bbox\wbox}\rdneg\bbox A  \fCenter A$
\DP
\]

As to the characterizing axiom of AP: 
\item $ \neg A  \wedge A \fCenter \bot \quad\rightsquigarrow \quad \wbox\dneg\bbox A \wedge  A \fCenter \bot$,

\[
\AX $A \fCenter A$
\RL{\scriptsize $\WCIR$}
\UI $\WCIR A\fCenter \WCIR A$
\UI $\WCIR A\fCenter \bbox A$
\UI $ \dneg\bbox A \fCenter \DNEG\WCIR A$
\UI $ \wbox\dneg\bbox A \fCenter \WBOX\DNEG\WCIR A$
\RL{\scriptsize AP}
\UI $A \XAND \wbox\dneg\bbox A \fCenter \XBOT$
\UI $ \wbox\dneg\bbox A \fCenter A \XRARR \XBOT$
\UI $ A \XAND \wbox\dneg\bbox A \fCenter \XBOT$
\LL{\scriptsize $\mathrm{E}_\mathsf{L}$}
\UI $ \wbox\dneg\bbox A \XAND  A \fCenter \XBOT$
\UI $ \wbox\dneg\bbox A \XAND  A \fCenter \bot$
\UI $ \wbox\dneg\bbox A \wedge  A \fCenter \bot$
\DP
\]

\newpage
As to the  characterizing axiom of DP: 
\item $\neg A  \wedge \neg\neg A \fCenter \bot \quad\rightsquigarrow \quad \wbox\dneg\bbox A \wedge \wbox\dneg\bbox\wbox\rdneg\wdia A \fCenter \bot$,

{\fns
\[
\AXC{$A \fCenter A$}
\RL{\scriptsize $\WCIR$}
\UIC{$\WCIR A \fCenter \WCIR A$}
\UIC{$\wdia A \fCenter \WCIR A$}
\UIC{$\wdia A \fCenter \bbox A$}
\RL{\scriptsize $\mathrm{W}_\mathsf{L}$}
\UIC{$\wdia A  \DAND \DBOT \fCenter  \bbox A$}
\UIC{$ \DBOT \fCenter  \DNEG\wdia A    \DOR \bbox A $}
\RL{\scriptsize $\mathrm{E}_\mathsf{L}$}
\UIC{$ \DBOT \fCenter \bbox A \DOR  \DNEG\wdia A   $}
\UIC{$\DNEG\bbox A \DAND \DBOT \fCenter \DNEG\wdia A $}
\RL{\scriptsize $\DBOT$}
\UIC{$\DNEG\bbox A \fCenter \DNEG\wdia A $}
\UIC{$\DNEG\bbox A \fCenter \rdneg\wdia A $}
\UIC{$\dneg\bbox A \fCenter \rdneg\wdia A $}
\RL{\scriptsize $\WCIR\WBOX$}
\UIC{$\dneg\bbox A \fCenter \WCIR\WBOX\rdneg\wdia A $}
\UIC{$\LH\dneg\bbox A \fCenter \WBOX\rdneg\wdia A $}
\UIC{$\LH\dneg\bbox A \fCenter \wbox\rdneg\wdia A $}
\UIC{$\dneg\bbox A \fCenter \WCIR\wbox\rdneg\wdia A$}
\UIC{$\dneg\bbox A \fCenter \bbox\wbox\rdneg\wdia A$}
\UIC{$\wbox\dneg\bbox A \fCenter \WBOX\bbox\wbox\rdneg\wdia A$}
\UIC{$\BDIA\wbox\dneg\bbox A \fCenter \bbox\wbox\rdneg\wdia A$}
\RL{\scriptsize $\mathrm{W}_\mathsf{L}$}
\UIC{$\BDIA\wbox\dneg\bbox A \fCenter \bbox\wbox\rdneg\wdia A \DOR \DBOT$}
\UIC{$\DNEG\bbox\wbox\rdneg\wdia A \DAND \BDIA\wbox\dneg\bbox A  \fCenter \DBOT $}
\LL{\scriptsize $\mathrm{E}_\mathsf{L}$}
\UIC{$\BDIA\wbox\dneg\bbox A \DAND \DNEG\bbox\wbox\rdneg\wdia A \fCenter \DBOT $}
\UIC{$\DNEG\bbox\wbox\rdneg\wdia A \fCenter \DNEG\BDIA\wbox\dneg\bbox A \DOR \DBOT $}
\UIC{$\dneg\bbox\wbox\rdneg\wdia A \fCenter \DNEG\BDIA\wbox\dneg\bbox A \DOR \DBOT $}
\UIC{$\wbox\dneg\bbox\wbox\rdneg\wdia A \fCenter \WBOX(\DNEG\BDIA\wbox\dneg\bbox A \DOR \DBOT)  $}
\UIC{$\BDIA\wbox\dneg\bbox\wbox\rdneg\wdia A \fCenter \DNEG\BDIA\wbox\dneg\bbox A \DOR \DBOT  $}
\UIC{$\BDIA\wbox\dneg\bbox A \DAND \BDIA\wbox\dneg\bbox\wbox\rdneg\wdia A \fCenter \DBOT$}
\UIC{$\BDIA\wbox\dneg\bbox\wbox\rdneg\wdia A \fCenter \BDIA\wbox\dneg\bbox A \XRARR \DBOT$}
\UIC{$\wbox\dneg\bbox\wbox\rdneg\wdia A \fCenter \WBOX(\BDIA\wbox\dneg\bbox A \XRARR \DBOT)$}
\LL{\scriptsize $\mathrm{W}_\mathsf{L}$}
\UIC{$\wbox\dneg\bbox\wbox\rdneg\wdia A \XAND \wbox\dneg\bbox A    \fCenter \WBOX(\BDIA\wbox\dneg\bbox A \XRARR \DBOT)$}
\LL{\scriptsize $\mathrm{E}_\mathsf{L}$}
\UIC{$\wbox\dneg\bbox A \XAND \wbox\dneg\bbox\wbox\rdneg\wdia A      \fCenter \WBOX(\BDIA\wbox\dneg\bbox A \XRARR \DBOT)$}
\UIC{$\BDIA(\wbox\dneg\bbox A \XAND \wbox\dneg\bbox\wbox\rdneg\wdia A)   \fCenter \BDIA\wbox\dneg\bbox A \XRARR \DBOT$}
\UIC{$ \BDIA\wbox\dneg\bbox A \DAND  \BDIA(\wbox\dneg\bbox A \XAND \wbox\dneg\bbox\wbox\rdneg\wdia A)    \fCenter \DBOT$}
\UIC{$ \BDIA\wbox\dneg\bbox A \fCenter  \BDIA(\wbox\dneg\bbox A \XAND \wbox\dneg\bbox\wbox\rdneg\wdia A)  \XRARR \DBOT$}
\UIC{$ \wbox\dneg\bbox A \fCenter  \WBOX(\BDIA(\wbox\dneg\bbox A \XAND \wbox\dneg\bbox\wbox\rdneg\wdia A)  \XRARR \DBOT)$}
\LL{\scriptsize $\mathrm{W}_\mathsf{L}$}
\UIC{$ \wbox\dneg\bbox A  \XAND \wbox\dneg\bbox\wbox\rdneg\wdia A \fCenter  \WBOX(\BDIA(\wbox\dneg\bbox A \XAND \wbox\dneg\bbox\wbox\rdneg\wdia A)  \XRARR \DBOT)$}
\UIC{$\BDIA(\wbox\dneg\bbox A  \XAND \wbox\dneg\bbox\wbox\rdneg\wdia A) \fCenter  \BDIA(\wbox\dneg\bbox A \XAND \wbox\dneg\bbox\wbox\rdneg\wdia A)  \XRARR \DBOT)$}
\UIC{$\BDIA(\wbox\dneg\bbox A \XAND \wbox\dneg\bbox\wbox\rdneg\wdia A)  \DAND \BDIA(\wbox\dneg\bbox A  \XAND \wbox\dneg\bbox\wbox\rdneg\wdia A) \fCenter \DBOT$}
\LL{\scriptsize $\mathrm{C}_\mathsf{D}$}
\UIC{$\BDIA(\wbox\dneg\bbox A \XAND \wbox\dneg\bbox\wbox\rdneg\wdia A) \fCenter \DBOT$}
\UIC{$\wbox\dneg\bbox A \XAND \wbox\dneg\bbox\wbox\rdneg\wdia A \fCenter \WBOX\DBOT$}
\RL{\scriptsize $\WBOX\DBOT$}
\UIC{$\wbox\dneg\bbox A \XAND \wbox\dneg\bbox\wbox\rdneg\wdia A \fCenter \XBOT$}
\UIC{$\wbox\dneg\bbox A \XAND \wbox\dneg\bbox\wbox\rdneg\wdia A \fCenter \bot$}
\UIC{$\wbox\dneg\bbox A \wedge \wbox\dneg\bbox\wbox\rdneg\wdia A \fCenter \bot$}
\DP
\]
}
\newpage
As to the  characterizing axiom of WS:
\item $ \top \fCenter  \neg\neg A  \vee \neg A  \quad\rightsquigarrow \quad  \top \fCenter \wbox\rdneg\wdia\wbox\dneg\bbox A \vee \wbox\rdneg\wdia A  $,

{\fns
\[
\AX $A \fCenter A$
\RL{\scriptsize $\WCIR$}
\UI $\WCIR A\fCenter \WCIR A$
\UI $\WCIR A\fCenter \bbox A$
\UI $\wdia A \fCenter \bbox A $
\UI $\dneg\bbox A \fCenter \DNEG\wdia A $
\UI $\dneg\bbox A \fCenter \rdneg\wdia A $
\UI $\wbox\dneg\bbox A \fCenter \WBOX\rdneg\wdia A $
\UI $\BDIA\wbox\dneg\bbox A \fCenter \rdneg\wdia A $
\RL{\scriptsize WS}
\UI $\BDIA(\WBOX\DNEG\WCIR\wbox\dneg\bbox A \XCRARR \XTOP)  \fCenter \rdneg\wdia A $
\UI $ \WBOX\DNEG\WCIR\wbox\dneg\bbox A \XCRARR \XTOP \fCenter \WBOX\rdneg\wdia A $
\UI $ \WBOX\DNEG\WCIR\wbox\dneg\bbox A \XCRARR \XTOP \fCenter \wbox\rdneg\wdia A $
\UI $\XTOP \fCenter \WBOX\DNEG\WCIR\wbox\dneg\bbox A \XOR \wbox\rdneg\wdia A $
\UI $\wbox\rdneg\wdia A \XCRARR \XTOP \fCenter \WBOX\DNEG\WCIR\wbox\dneg\bbox A $
\UI $\BDIA(\wbox\rdneg\wdia A \XCRARR \XTOP) \fCenter \DNEG\WCIR\wbox\dneg\bbox A $
\UI $ \WCIR\wbox\dneg\bbox A \fCenter \DNEG\BDIA(\wbox\rdneg\wdia A \XCRARR \XTOP)$
\UI $ \circ\wbox\dneg\bbox A \fCenter \DNEG\BDIA(\wbox\rdneg\wdia A \XCRARR \XTOP)$
\UI $\BDIA(\wbox\rdneg\wdia A \XCRARR \XTOP)  \fCenter \DNEG\circ\wbox\dneg\bbox A$
\UI $\BDIA(\wbox\rdneg\wdia A \XCRARR \XTOP) \fCenter \rdneg\circ\,\wbox\dneg\bbox A $
\UI $\wbox\rdneg\wdia A \XCRARR \XTOP \fCenter \WBOX\rdneg\wdia\wbox\dneg\bbox A $
\UI $\wbox\rdneg\wdia A \XCRARR \XTOP \fCenter \wbox\rdneg\wdia\wbox\dneg\bbox A $
\UI $ \XTOP \fCenter \wbox\rdneg\wdia\wbox\dneg\bbox A \XOR \wbox\rdneg\wdia A $
\UI $ \XTOP \fCenter \wbox\rdneg\wdia\wbox\dneg\bbox A \vee \wbox\rdneg\wdia A $
\UI $ \top \fCenter \wbox\rdneg\wdia\wbox\dneg\bbox A \vee \wbox\rdneg\wdia A $
\DP
\]
}

\end{itemize}
\end{proof}

\subsection{Conservativity}\label{ssec: conservativity}
To argue that the calculi introduced in Section \ref{sec:proper display} conservatively capture their respective logics (see Section \ref{ssec:Hilbert system}), we follow the standard proof strategy discussed in \cite{greco2016unified,GKPLori}. Let L be one of the logics of Definition \ref{def:logics},  let $\vdash_\msf{L}$ denote its syntactic consequence relation, and let $\models_{\msf{L}}$ (resp.~$\models_{\msf{HL}}$)  denote the semantic consequence relation arising from the class of the perfect (heterogeneous) algebras associated with L.  We need to show that, for all $\mathcal{L}$-formulas $A$ and $B$, if $A^\tau \vdash B^\tau$ is derivable in the display calculus D.L,  then  $A \vdash_{\msf{L}} B$. This claim can be proved using  the following facts: (a) the rules of D.L are sound w.r.t.~perfect heterogeneous  L-algebras  (cf.~Section \ref{ssec:soundness});  (b) L is complete w.r.t.~its associated class of algebras (cf.~Theorem \ref{completeness: H.SDM}); and (c)  L-algebras are equivalently presented as heterogeneous  L-algebras (cf.~Section \ref{Heterogeneous presentation}), so that the semantic consequence relations arising from each type of algebras preserve and reflect the translation (cf.~Proposition \ref{prop:consequence preserved and reflected}).  If  $A^\tau\vdash B^\tau$ is derivable in D.L,  then by (a),  $\models_{\msf{HL}} A^\tau \fCenter B^\tau$. By (c), this implies that $\models_{\msf{L}} A \fCenter B$. By (b), this implies that $A\vdash_{\msf{L}} B$, as required.

\subsection{Cut elimination and subformula property}
In the present subsection, we briefly sketch the proof of cut elimination and subformula property for all display calculi introduced in Section \ref{ssec:display calculi}. As discussed earlier on, proper display calculi have been designed so that the cut elimination and subformula property  can  be inferred from a meta-theorem, following the strategy introduced by Belnap for display calculi. The meta-theorem to which we will appeal was proved in \cite[Theorem 4.1]{TrendsXIII}.

\begin{theorem}\label{cut elimination}
Cut elimination and subformula property hold for all display calculi introduced in Section \ref{ssec:display calculi}.
\end{theorem}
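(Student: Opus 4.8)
The plan is to derive the statement from the general meta-theorem \cite[Theorem 4.1]{TrendsXIII}, which guarantees that every \emph{proper} multi-type display calculus enjoys cut elimination and the subformula property. Accordingly, it suffices to verify that each of the calculi D.SM, D.LQM, D.UQM, D.DP, D.AP and D.WS meets the defining conditions of a proper multi-type display calculus in the sense of \cite[Definition A.1]{linearlogPdisplayed}, i.e.\ the multi-type reformulation of Belnap's conditions C1--C8. I would organize the verification so that the basic calculus D.SM is treated first, and the extensions are then disposed of by a uniform modularity argument.

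For D.SM, the conditions governing the behaviour of \emph{parameters}---shape-alikeness, non-proliferation and position-alikeness of congruent parameters (C2--C4), together with the preservation/subterm condition (C1)---are verified by a routine inspection of the introduction rules and the structural rules, since every such rule has been designed, following the multi-type methodology, to be parametric in exactly the required sense. The display condition (C5) follows from the fact that every principal constituent can be isolated in the antecedent or consequent by repeated application of the display postulates $(\mathrm{res}_\mathsf{L})$, $(\mathrm{res}_\mathsf{D})$, $(\mathrm{adj}_*)$, $(\mathrm{adj}_\mathsf{LD})$ and $(\mathrm{adj}_\mathsf{DL})$, all of which are invertible; this is precisely what the adjunction/residuation structure of perfect heterogeneous algebras (cf.\ Proposition \ref{prop:canonical extensions}) is meant to reflect. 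The closure conditions on substitution of parameters (C6--C7) hold for the same structural reasons.

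The one substantive step is condition C8, the eliminability of a cut both of whose cut-formulas are principal. This I would check case by case, one case per logical connective. For the lattice connectives $\xand$ and $\xor$ (and their $\mathsf{K}$-type counterparts $\dand$, $\dor$) and for the units, the reduction is the standard Belnap one. The genuinely multi-type cases are those of the heterogeneous connectives $\wbox$ and $\wdia$: a cut on a principal $\wbox\alpha$ (respectively $\wdia A$) introduced on both sides by the $\wbox$-rules (resp.\ the $\wdia$-rules) is reduced, after displaying the relevant structures by means of $(\mathrm{adj}_\mathsf{LD})$ and $(\mathrm{adj}_\mathsf{DL})$, to a cut on the immediate subterm $\alpha$ (resp.\ $A$), which lives in the other type and is of strictly lower complexity. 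The negation cases, for $\dneg$ and $\rdneg$, are handled analogously using $(\mathrm{cont})$ and $(\mathrm{adj}_*)$. I expect this principal-cut analysis, and in particular the bookkeeping needed to ensure that the type-crossing reductions interact correctly with the multi-type display postulates, to be the only part of the argument that is not entirely mechanical.

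Finally, the extensions are handled modularly. Each of D.LQM, D.UQM, D.DP, D.AP and D.WS is obtained from D.SM (or, in the cases of D.AP and D.WS, from D.DP) by adding \emph{structural} rules only: none of the additional rules (LQM, UQM, $(\mathrm{res}_\mathsf{B})$, AP, WS) introduces a logical connective as a principal formula. Consequently condition C8 is entirely unaffected by their addition, while conditions C1--C7 are readily seen to be preserved, since these structural rules are parametric and closed under the relevant substitutions. Hence all six calculi are proper, and the meta-theorem \cite[Theorem 4.1]{TrendsXIII} applies to each of them, yielding cut elimination and the subformula property.
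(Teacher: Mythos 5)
Your proposal is correct and follows essentially the same route as the paper: appeal to the meta-theorem of \cite[Theorem 4.1]{TrendsXIII}, note that all conditions except the principal-cut condition ($\textrm{C}'_8$) are verified by routine inspection of the rules, and then check $\textrm{C}'_8$ connective by connective, with the multi-type cases ($\wbox$, $\wdia$) reduced via the adjunction display rules and the negation case via $(\mathrm{cont})$/$(\mathrm{adj}_*)$, exactly as in the paper's reduction steps. Your explicit modularity remark for the extensions (the added rules being structural, hence not affecting principal cuts) is left implicit in the paper but is the same underlying argument.
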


\begin{proof}
All conditions in  \cite{TrendsXIII} except $\textrm{C}'_8$ are readily satisfied by inspecting the rules. Condition $\textrm{C}'_8$ requires to check that reduction steps are available for every application of the cut rule in which both cut-formulas are principal, which either remove the original cut altogether or replace it by one or more cuts on formulas of strictly lower complexity.  In what follows, we show  $\textrm{C}'_8$ for the unary connectives by induction on the complexity of cut formula.
\paragraph*{Pure type atomic propositions:}

\begin{center}
\begin{tabular}{ccc}
\bottomAlignProof
\AX$p \fCenter p$
\AX$p \fCenter p$
\BI$p \fCenter p$
\DisplayProof

 & $\rightsquigarrow$ &

\bottomAlignProof
\AX$p \fCenter p$
\DisplayProof
 \\
\end{tabular}
\end{center}

\paragraph*{Pure type constants:}

\begin{center}
\begin{tabular}{ccc}
\bottomAlignProof
\AX$\XTOP \fCenter \xtop$
\AXC{\ \ \ $\vdots$ \raisebox{1mm}{$\pi_1$}}
\noLine
\UI$\XTOP \fCenter X$
\UI$\xtop \fCenter X$
\BI$\XTOP \fCenter X$
\DisplayProof

 & $\rightsquigarrow$ &

\bottomAlignProof
\AXC{\ \ \ $\vdots$ \raisebox{1mm}{$\pi_1$}}
\noLine
\UI$\XTOP \fCenter X$
\DisplayProof
 \\
\end{tabular}
\end{center}

\noindent The cases for $\xbot$, $\dtop$, $\dbot$ are standard and similar to the one above.

\paragraph*{Pure-type unary connectives:}

\begin{center}
\begin{tabular}{ccc}
\!\!\!\!\!
\bottomAlignProof
\AXC{\ \ \ $\vdots$ \raisebox{1mm}{$\pi_1$}}
\noLine
\UI$\Gamma \fCenter \DNEG \alpha$
\UI$\Gamma \fCenter \rdneg \alpha$

\AXC{\ \ \ $\vdots$ \raisebox{1mm}{$\pi_2$}}
\noLine
\UI$\DNEG\alpha \fCenter \Delta$
\UI$\rdneg \alpha \fCenter \Delta$
\BI$\Gamma \fCenter \Delta$
\DisplayProof

 & $\rightsquigarrow$ &

\!\!\!\!\!\!\!\!\!\!\!\!\!\!\!\!\!\!\!\!
\bottomAlignProof
\AXC{\ \ \ $\vdots$ \raisebox{1mm}{$\pi_2$}}
\noLine
\UI$\DNEG\alpha \fCenter \Delta$
\UI$\DNEG\Delta \fCenter \alpha$

\AXC{\ \ \ $\vdots$ \raisebox{1mm}{$\pi_1$}}
\noLine
\UI$\Gamma \fCenter \DNEG \alpha$
\UI$\alpha \fCenter \DNEG\Gamma$
\BI$\DNEG \Delta \fCenter \DNEG\Gamma$
\RightLabel{\scriptsize cont}
\UI$\Gamma \fCenter \Delta$
\DisplayProof
 \\
\end{tabular}
\end{center}

\paragraph*{Pure-type binary connectives:}
\setlength{\parindent}{-4em}
{\small
\begin{center}
\begin{tabular}{ccc}
\!\!\!\!\!
\bottomAlignProof
\AXC{\ \ \ $\vdots$ \raisebox{1mm}{$\pi_1$}}
\noLine
\UI$X \fCenter A$
\AXC{\ \ \ $\vdots$ \raisebox{1mm}{$\pi_2$}}
\noLine
\UI$Y \fCenter B$
\BI$X \XAND Y \fCenter A \xand B$
\AXC{\ \ \ $\vdots$ \raisebox{1mm}{$\pi_3$}}
\noLine
\UIC{$A \XAND B \fCenter Z$}
\UIC{$A \xand B \fCenter Z$}
\BIC{$X \XAND Y \fCenter Z$}
\DisplayProof

 & $\rightsquigarrow$ &

\!\!\!\!\!\!\!\!\!\!\!\!\!\!\!\!\!\!\!\!
\bottomAlignProof
\AXC{\ \ \ $\vdots$ \raisebox{1mm}{$\pi_1$}}
\noLine
\UIC{$X \fCenter A$}
\AXC{\ \ \ $\vdots$ \raisebox{1mm}{$\pi_2$}}
\noLine
\UI$Y \fCenter B$
\AXC{\ \ \ $\vdots$ \raisebox{1mm}{$\pi_3$}}
\noLine
\UIC{$A \XAND B \fCenter Z$}
\UIC{$B \fCenter A \XRARR Z$}
\BIC{$Y \fCenter A \XRARR Z$}
\UIC{$A \XAND Y \fCenter Z$}
\UIC{$Y \XAND A \fCenter Z$}
\UIC{$A \fCenter Y \XRARR Z$}
\BIC{$X \fCenter Y \XRARR Z$}
\UIC{$Y \XAND X \fCenter Z$}
\UIC{$X \XAND Y \fCenter Z$}
\DisplayProof
 \\
\end{tabular}
\end{center}
}
\noindent The cases for $A \xor B$, $\alpha \dand \beta$, $\alpha \dor \beta$ are standard and similar to the one above.

\paragraph*{Multi-type unary connectives:}

\begin{center}
\begin{tabular}{ccc}
\bottomAlignProof
\AXC{\ \ \ $\vdots$ \raisebox{1mm}{$\pi_1$}}
\noLine
\UI$X \fCenter \WBOX\alpha$
\UI$X \fCenter \wbox\alpha$
\AXC{\ \ \ $\vdots$ \raisebox{1mm}{$\pi_2$}}
\noLine
\UI$\alpha \fCenter \Delta$
\UI$\wbox\alpha \fCenter \WBOX \Delta$
\BI$X \fCenter \WBOX\Delta$
\DisplayProof

 & $\rightsquigarrow$ &

\!\!\!\!\!\!\!
\bottomAlignProof
\AXC{\ \ \ $\vdots$ \raisebox{1mm}{$\pi_1$}}
\noLine
\UI$X \fCenter \WBOX\alpha$
\UI$\BDIA X \fCenter \alpha$
\AXC{\ \ \ $\vdots$ \raisebox{1mm}{$\pi_2$}}
\noLine
\UI$\alpha \fCenter \Delta$
\BI$\BDIA X \fCenter \Delta$
\UI$X \fCenter \WBOX\Delta$
\DisplayProof
 \\
\end{tabular}
\end{center}

\begin{center}
\begin{tabular}{ccc}
\bottomAlignProof
\AXC{\ \ \ $\vdots$ \raisebox{1mm}{$\pi_1$}}
\noLine
\UI$\Gamma \fCenter \WCIR A$
\UI$\Gamma \fCenter \bbox A$
\AXC{\ \ \ $\vdots$ \raisebox{1mm}{$\pi_2$}}
\noLine
\UI$\WCIR A\fCenter \Delta$
\UI$\bbox A \fCenter  \Delta$
\BI$\Gamma \fCenter  \Delta$
\DisplayProof

 & $\rightsquigarrow$ &

\!\!\!\!\!\!\!
\bottomAlignProof
\AXC{\ \ \ $\vdots$ \raisebox{1mm}{$\pi_1$}}
\noLine
\UI$\Gamma \fCenter \WCIR A$
\UI$\LH\Gamma \fCenter A$
\AXC{\ \ \ $\vdots$ \raisebox{1mm}{$\pi_2$}}
\noLine
\UI$\WCIR A\fCenter \Delta$
\UI$ A\fCenter \RH \Delta$

\BI$\LH\Gamma \fCenter \RH \Delta$
\RightLabel{\scriptsize $\bullet$}
\UI$\Gamma \fCenter  \Delta$
\DisplayProof
\end{tabular}
\end{center}

\end{proof}

\bibliography{dissertation-3}
\bibliographystyle{plain}

\appendix
\section{Analytic inductive inequalities}
\label{sec:analytic inductive ineq}
In the present section, we  specialize the definition of {\em analytic inductive inequalities} (cf.\ \cite{greco2016unified}) to the  multi-type language $\mathcal{L}_\textrm{MT}$, in the types $\mathsf{DL}$ and $\mathsf{K}$,  defined in Section \ref{sec: multi-type language}  and reported below for the reader's convenience.
\begin{align*}
\mathsf{DL}\ni  A ::= & \,p \mid \, {\wbox}\alpha \mid \xtop \mid \xbot \mid A \xand A \mid A \xor A \\
\mathsf{K}\ni \alpha ::=&\, {\circ} A \mid \dtop \mid \dbot \mid\,\rdneg\alpha \mid \alpha \dor \alpha \mid  \alpha \dand \alpha
\end{align*}
We will make use of the following auxiliary definition: an {\em order-type} over $n\in \mathbb{N}$  is an $n$-tuple $\epsilon\in \{1, \partial\}^n$. For every order type $\epsilon$, we denote its {\em opposite} order type by $\epsilon^\partial$, that is, $\epsilon^\partial(i) = 1$ iff $\epsilon(i)=\partial$ for every $1 \leq i \leq n$.
The connectives of the language above are grouped together  into the  families $\mathcal{F}: = \mathcal{F}_{\mathsf{DL}}\cup \mathcal{F}_{\mathsf{K}}\cup \mathcal{F}_{\textrm{MT}}$, $\mathcal{G}: = \mathcal{G}_{\mathsf{DL}}\cup \mathcal{G}_{\mathsf{K}} \cup  \mathcal{G}_{\textrm{MT}}$, and $\mathcal{H}: = \mathcal{H}_{\mathsf{DL}}\cup \mathcal{H}_{\mathsf{K}}\cup \mathcal{H}_{\textrm{MT}}$ defined as follows:
\begin{center}
\begin{tabular}{lcl}
$\mathcal{F}_{\mathsf{DL}}: = \{\wedge, \top\}$&$ \mathcal{G}_{\mathsf{DL}}:= \{\xor, \xbot\}$&$\mathcal{H}_{\mathsf{DL}}: = \varnothing$\\
$\mathcal{F}_{\mathsf{K}}: = \{\dand,\dtop\}$ & $\mathcal{G}_{\mathsf{K}}: = \{\dor,\dbot\}$&$ \mathcal{H}_{\mathsf{K}}:= \{\dneg\}$\\
$\mathcal{F}_{\textrm{MT}}: = \varnothing$ & $\mathcal{G}_{\textrm{MT}}: = \{\wbox\}$&$\mathcal{H}_{\textrm{MT}}:= \{\circ\}$\\

\end{tabular}
\end{center}
For any $\ell \in \mathcal{F} \cup \mathcal{G} \cup\mathcal{H}$, we let $n_\ell \in \mathbb{N}$ denote the arity of $\ell$, and the order-type $\epsilon_\ell$ on $n_\ell$ indicates whether the $i$th coordinate of $\ell$ is positive ($\epsilon_\ell(i) = 1$) or  negative ($\epsilon_\ell(i) = \partial$). The order-theoretic motivation for this partition is that the algebraic interpretations of $\mathcal{F}$-connectives (resp.\ $\mathcal{G}$-connectives), preserve finite joins (resp.\ meets) in each positive coordinate and reverse finite meets (resp.\ joins) in each negative coordinate, while the algebraic interpretations of $\mathcal{H}$-connectives, preserve both finite joins and meets in each positive coordinate and reverse both finite meets and joins in each negative coordinate.

				
For any term $s(p_1,\ldots p_n)$, any order type $\epsilon$ over $n$, and any $1 \leq i \leq n$, an \emph{$\epsilon$-critical node} in a signed generation tree of $s$ is a leaf node $+p_i$ with $\epsilon(i) = 1$ or $-p_i$ with $\epsilon(i) = \partial$. An $\epsilon$-{\em critical branch} in the tree is a branch ending in an $\epsilon$-critical node. For any term $s(p_1,\ldots p_n)$ and any order type $\epsilon$ over $n$, we say that $+s$ (resp.\ $-s$) {\em agrees with} $\epsilon$, and write $\epsilon(+s)$ (resp.\ $\epsilon(-s)$), if every leaf in the signed generation tree of $+s$ (resp.\ $-s$) is $\epsilon$-critical.
				 We will also write $+s'\prec \ast s$ (resp.\ $-s'\prec \ast s$) to indicate that the subterm $s'$ inherits the positive (resp.\ negative) sign from the signed generation tree $\ast s$. Finally, we will write $\epsilon(s') \prec \ast s$ (resp.\ $\epsilon^\partial(s') \prec \ast s$) to indicate that the signed subtree $s'$, with the sign inherited from $\ast s$, agrees with $\epsilon$ (resp.\ with $\epsilon^\partial$).
				\begin{definition}[\textbf{Signed Generation Tree}]
					\label{def: signed gen tree}
					The \emph{positive} (resp.\ \emph{negative}) {\em generation tree} of any $\mathcal{L}_\textrm{MT}$-term $s$ is defined by labelling the root node of the generation tree of $s$ with the sign $+$ (resp.\ $-$), and then propagating the labelling on each remaining node as follows:
					For any node labelled with $\ell \in \mathcal{F}\cup \mathcal{G} \cup \mathcal{H}$ of arity $n_\ell$, and for any $1\leq i\leq n_\ell$, assign the same (resp.\ the opposite) sign to its $i$th child node if $\epsilon_\ell(i) = 1$ (resp.\ if $\epsilon_\ell(i) = \partial$). Nodes in signed generation trees are \emph{positive} (resp.\ \emph{negative}) if are signed $+$ (resp.\ $-$).
				\end{definition}
				
		\begin{definition}[\textbf{Good branch}]
					\label{def:good:branch}
					Nodes in signed generation trees will be called \emph{$\Delta$-adjoints}, \emph{syntactically left residual (SLR)}, \emph{syntactically right residual (SRR)}, and \emph{syntactically right adjoint (SRA)}, according to the specification given in Table \ref{Join:and:Meet:Friendly:Table}.
					A branch in a signed generation tree $\ast s$, with $\ast \in \{+, - \}$, is called a \emph{good branch} if it is the concatenation of two paths $P_1$ and $P_2$, one of which may possibly be of length $0$, such that $P_1$ is a path from the leaf consisting (apart from variable nodes) only of PIA-nodes\footnote{For an expanded discussion on this definition, see \cite[Remark 3.24]{palmigiano2016sahlqvist} and \cite[Remark 3.3]{conradie2016algorithmic}.}, and $P_2$ consists (apart from variable nodes) only of Skeleton-nodes.

									\begin{table}
						\begin{center}
							\begin{tabular}{| c | c |}
								\hline
								Skeleton &PIA\\
								\hline
								$\Delta$-adjoints & SRA \\
								\begin{tabular}{c c c ccc}
									 $+\ $ & $\phantom{\xand}$& $\xor$ & $\dor$& $\phantom{\dand}$ & \\
									$-\ $ && $\xand$  &$\dand$ && \\
								\end{tabular}
								&
								\begin{tabular}{c c c c c c  }
									$+\ $ & \ $\xand$\  &\ $\dand$\ &\ $\circ$ \ &\ $\rdneg$\ &\ $\wbox\ $\\
									$-\ $ & \  $\xor $\  &\ $\dor$\  &\ $\circ$ \ &\ $\dneg$\   \\
								\end{tabular}
								\\
																\hline

								SLR &SRR\\
								\begin{tabular}{c c c c c c}
									$+\ $ & \ $\xand$\  &\ $\dand$\ &\ $\circ$\  &\ $\dneg$\ &  \\
									$-\ $ & \ $\xor$\  &\ $\dor$\  &\ $\circ$\ &\ $\rdneg$\ &\ $\wbox$\  \\
								\end{tabular}
								&\begin{tabular}{c c c ccc}
									 $+\ $ & $\phantom{\xand}$& $\xor$ & $\dor$& $\phantom{\dand}$ & \\
									$-\ $ && $\xand$  &$\dand$ && \\
								\end{tabular}
								\\
								\hline
							\end{tabular}
						\end{center}
						\caption{Skeleton and PIA nodes.}\label{Join:and:Meet:Friendly:Table}
						\vspace{-1em}
					\end{table}
				\end{definition}
	
				\begin{center}
		\begin{tikzpicture}
		\draw (-5,-1.5) -- (-3,1.5) node[above]{\Large$+$} ;
		\draw (-5,-1.5) -- (-1,-1.5) ;
		\draw (-3,1.5) -- (-1,-1.5);
		\draw (-5,0) node{Skeleton} ;
		\draw[dashed] (-3,1.5) -- (-4,-1.5);
		\draw[dashed] (-3,1.5) -- (-2,-1.5);
		\draw (-4,-1.5) --(-4.8,-3);
		\draw (-4.8,-3) --(-3.2,-3);
		\draw (-3.2,-3) --(-4,-1.5);
		\draw[dashed] (-4,-1.5) -- (-4,-3);
		\draw[fill] (-4,-3) circle[radius=.1] node[below]{$+p$};
		\draw
		(-2,-1.5) -- (-2.8,-3) -- (-1.2,-3) -- (-2,-1.5);
		\fill[pattern=north east lines]
		(-2,-1.5) -- (-2.8,-3) -- (-1.2,-3);
		\draw (-2,-3.25)node{$s_1$};
		\draw (-5,-2.25) node{PIA} ;
		\draw (0,1.8) node{$\leq$};
		\draw (5,-1.5) -- (3,1.5) node[above]{\Large$-$} ;
		\draw (5,-1.5) -- (1,-1.5) ;
		\draw (3,1.5) -- (1,-1.5);
		\draw (5,0) node{Skeleton} ;
		\draw[dashed] (3,1.5) -- (4,-1.5);
		\draw[dashed] (3,1.5) -- (2,-1.5);
		\draw (2,-1.5) --(2.8,-3);
		\draw (2.8,-3) --(1.2,-3);
		\draw (1.2,-3) --(2,-1.5);
		\draw[dashed] (2,-1.5) -- (2,-3);
		\draw[fill] (2,-3) circle[radius=.1] node[below]{$+p$};
		\draw
		(4,-1.5) -- (4.8,-3) -- (3.2,-3) -- (4, -1.5);
		\fill[pattern=north east lines]
		(4,-1.5) -- (4.8,-3) -- (3.2,-3) -- (4, -1.5);
		\draw (4,-3.25)node{$s_2$};
		\draw (5,-2.25) node{PIA} ;
		\end{tikzpicture}
	\end{center}

				\begin{definition}[\textbf{Analytic inductive inequalities}]
	\label{def:analytic inductive ineq}
					For any order type $\epsilon$ and any irreflexive and transitive relation $<_\Omega$ on $p_1,\ldots p_n$, the signed generation tree $*s$ $(* \in \{-, + \})$ of an $\mathcal{L}_{MT}$ term $s(p_1,\ldots p_n)$ is \emph{analytic $(\Omega, \epsilon)$-inductive} if
					\begin{enumerate}
						\item  every branch of $*s$ is good (cf.\ Definition \ref{def:good:branch});
						\item for all $1 \leq i \leq n$, every SRR-node occurring in  any $\epsilon$-critical branch with leaf $p_i$ is of the form $ \circledast(s, \beta)$ or $ \circledast(\beta, s)$, where the critical branch goes through $\beta$ and 
						\begin{enumerate}
							\item $\epsilon^\partial(s) \prec \ast s$ (cf.\ discussion before Definition \ref{def:good:branch}), and
							%
							\item $p_k <_{\Omega} p_i$ for every $p_k$ occurring in $s$ and for every $1\leq k\leq n$.
						\end{enumerate}

					\end{enumerate}
					
					We will refer to $<_{\Omega}$ as the \emph{dependency order} on the variables. An inequality $s \leq t$ is \emph{analytic $(\Omega, \epsilon)$-inductive} if the signed generation trees $+s$ and $-t$ are analytic $(\Omega, \epsilon)$-inductive. An inequality $s \leq t$ is \emph{analytic inductive} if is analytic $(\Omega, \epsilon)$-inductive for some $\Omega$ and $\epsilon$.
				\end{definition}
								
In each setting in which they are defined, analytic inductive inequalities are a subclass of inductive inequalities (cf.~\cite[Definition 16]{greco2016unified}). In their turn, inductive inequalities are {\em canonical} (that is, preserved under canonical extensions, as defined in each setting).

\end{document}